\theoremstyle{definition}
\newtheorem{defin}{Definition}[section]
\newtheorem{ex}[defin]{Example}
\theoremstyle{plain}
\newtheorem{theo}[defin]{Theorem}
\newtheorem{lemma}[defin]{Lemma}
\newtheorem{obs}[defin]{Remark}
\newtheorem{prop}[defin]{Proposition}
\newtheorem{cor}[defin]{Corollary}
\newtheorem{theorem}{Theorem}
\numberwithin{equation}{section}
\renewenvironment{abstract}
{\par\noindent\textbf{\abstractname.}\ \ignorespaces}
{\par\medskip}
\title{Entropies of non-positively curved metric spaces}
\author{Nicola Cavallucci}
\date{}
\begin{document}
	\maketitle
	\begin{abstract}
		\footnotesize
		We show the equivalences of several notions of entropy, like a version of the topological entropy of the geodesic flow and the Minkowski dimension of the boundary, in metric spaces with convex geodesic bicombings satisfying a uniform packing condition. Similar estimates will be given in case of closed subsets of the boundary of Gromov-hyperbolic metric spaces with convex geodesic bicombings.	
	\end{abstract}

\tableofcontents

\section{Introduction}
\label{sec-introduction}
This paper is devoted to the investigation of different asymptotic quantities associated to a metric space, some of them classical and widely studied. As we will see in a minute these invariants have different nature: dynamical, measure-theoretic and combinatoric.
The purpose is two-fold:
\begin{itemize}
	\item[-] to show the relations between these invariants,  especially to understand when they are equal;
	\item[-] to develop interesting tools and techniques to simplify the computation of these invariants. 
\end{itemize}
The second goal is extremely useful for applications: it is important to have flexible and easy to compute definitions in order to study these invariants in concrete cases. In the forthcoming paper \cite{Cav21bis} we will use this flexibility to extend Otal-Peigné's Theorem (\cite{OP04}) to a large class of metric spaces.
A simplified version of the techniques developed on this paper were used to show the continuity of the critical exponent of fundamental groups of a large class of compact metric spaces under Gromov-Hausdorff convergence, see \cite{Cav21ter}.
\vspace{2mm}

\noindent We are mainly interested on metric spaces satisfying weak bounds on the curvature. As upper bound we consider a very weak convexity condition: the existence of a convex geodesic bicombing. A geodesic bicombing on a metric space $X$ is a map $\sigma \colon X\times X \times [0,1] \to X$ such that for all $x,y\in X$ the function $\sigma_{xy}(\cdot)=\sigma(x,y,\cdot)$ is a geodesic (parametrized proportionally to arc-length) from $x$ to $y$. A bicombing $\sigma$ is convex if for all $x,y,x',y'\in X$ the map $t\mapsto d(\sigma_{xy}(t), \sigma_{x'y'}(t))$ is convex. 
Among metric spaces admitting a convex bicombing there are CAT$(0)$-spaces, Busemann convex spaces and all normed vector spaces. The interest in this condition is given by its stability under limits (\cite{Des15}, \cite{CavS20bis}), while it is not the case for Busemann convex spaces.
Given a bicombing $\sigma$, every curve $\sigma_{xy}$ is called a $\sigma$-geodesic. The bicombing is geodesically complete if any $\sigma$-geodesic can be extended to a bigger $\sigma$-geodesic. This notion coincides with the usual geodesic completeness in case of Busemann convex metric spaces. A GCB-space is a couple $(X,\sigma)$ where $X$ is a complete metric space and $\sigma$ is a geodesically complete, convex, geodesic bicombing on $X$.\\
As a lower bound on the curvature we take a uniform packing condition: a metric space $X$ is said to be $P_0$-packed at scale $r_0$ if for all $x\in X$ the cardinality of a maximal $2r_0$-separated subset of $B(x,3r_0)$ is at most $P_0$. This uniform packing condition interacts very well with the weak convexity property given by a geodesically complete, convex, geodesic bicombing, implying a uniform control of the packing condition \emph{at every scale} (see Proposition \ref{packingsmallscales} and \cite{CavS20bis}).
Sometimes, especially in the second part of the paper where relative versions of the invariants will be studied, we will impose also a Gromov-hyperbolicity condition on our metric space.

\subsection{Lipschitz-topological entropy of the geodesic flow}
The topological entropy of the geodesic flow has been intensively studied in case of Riemannian manifolds, especially in the negatively curved setting. If such a manifold is denoted by $\bar{M} = M/\Gamma$, where $M$ is its universal cover and $\Gamma$ is its fundamental group, then the set of parametrized geodesic lines is identified with the unit tangent bundle $S\bar{M}$.
%and the non-wandering set of the geodesic flow is the set of unit tangent vectors whose lift to $M$ generate a geodesic with endpoints in the limit set of $\Gamma$: we denote it by $S\bar{M}^{\text{nw}}$.
%Two cornerstones of the theory of the geodesic flow are the works of Eberlein (\cite{Ebe72}), who proved that the geodesic flow restricted to $S\bar{M}^{\text{nw}}$ is topologically transitive, and Sullivan \cite{Sul84}, who proved the ergodicity of the geodesic flow when $M$ is the $3$-dimensional hyperbolic space and $\Gamma$ is geometrically finite.
Probably the most important invariant associated to the geodesic flow is its topological entropy, denoted $h_\text{top}(\bar{M})$. It equals the Hausdorff dimension of the limit set of $\Gamma$ and the critical exponent of $\Gamma$ (see \cite{Sul84}, \cite{OP04}). Moreover if $\bar{M}$ is compact then it coincides also with the volume entropy of $M$ (\cite{Man79}), while this is no more true in general, even when $\bar M$ has finite volume (cp. \cite{DPPS09}): we will come back to these examples at the end of the introduction. 
\vspace{2mm}
%The topological entropy of the geodesic flow characterizes the hyperbolic metrics among Riemannian manifolds with pinched, negative curvature and with finite volume (\cite{PS19}).

%The third widely studied invariant on Riemannian manifolds, especially in the negatively curved setting, is the topological entropy of the geodesic flow (a non-complete list of references includes: , \cite{Man79}, \cite{Sul84}, \cite{KKW91}, \cite{Fla95}, \cite{Man97}, \cite{OP04}, \cite{ST18}, \cite{RV19}). \linebreak
\noindent In case of GCB-metric spaces $(X,\sigma)$ we restrict the attention to $\sigma$-geodesic lines. The topological entropy of the $\sigma$-geodesic flow is defined as the topological entropy (in the sense of Bowen, cp. \cite{Bow73}, \cite{HKR95}) of the dynamical system $(\text{Geod}_\sigma(X), \Phi_t)$, where $\text{Geod}_\sigma(X)$ is the space of parametrized $\sigma$-geodesic lines, endowed with the topology of uniform convergence on compact subsets, and $\Phi_t$ is the reparametrization flow. It is:
$$h_{\text{top}}(\text{Geod}_\sigma(X)) = \inf_{\text{\texthtd}}\sup_{K\subseteq \text{Geod}_\sigma(X)} \lim_{r\to 0} \lim_{T\to +\infty} \frac{1}{T}\log \text{Cov}_{\text{\texthtd}^T}(K,r),$$
where the infimum is taken {\em among all metrics on \textup{Geod}$_\sigma(X)$} inducing its topology, the supremum is taken among all compact subsets of Geod$_\sigma(X)$, \texthtd$^T$ is the distance $\text{\texthtd}^T(\gamma,\gamma') = \max_{t\in [0,T]}\text{\texthtd}(\Phi_t(\gamma), \Phi_t(\gamma'))$ and $\text{Cov}_{\text{\texthtd}^T}(K,r)$ is the minimal number of balls (with respect to the metric \texthtd$^T$) of radius $r$ needed to cover $K$. 
We remark that in case of Busemann convex (or CAT$(0)$) metric spaces the space of $\sigma$-geodesic lines coincides with the set of geodesic lines. This flow has no recurrent geodesics, so applying the variational principle (cp. \cite{HKR95}) it is straightforward to conclude that its topological entropy is zero (Lemma \ref{zero-entropy}). Looking carefully at the proof of the variational principle it turns out that the metrics on $\text{Geod}_\sigma(X)$ almost realizing the infimum in the definition of the topological entropy are restriction to Geod$_\sigma(X)$ of metrics on its one-point compactification. In particular they are no the natural ones to consider in this setting.
%: indeed the general idea behind the topological entropy is to compute the number of geodesic lines needed to stay at small distance $r$ from any other geodesic line for a long time $T$. But a general metric \texthtd\, on Geod$_\sigma(X)$ does not take into account this information. 
That is why, in Section \ref{sec-liptop}, we will restrict the attention to the class of {\em{geometric metrics}} \texthtd: those with the property that the evaluation map $E\colon (\text{Geod}_\sigma(X), \text{\texthtd}) \to (X,d)$ defined as $E(\gamma)=\gamma(0)$ is Lipschitz. Notice that for a geometric metric two geodesic lines are not close if they are distant at time $0$. Accordingly the \emph{Lipschitz-topological entropy} of the geodesic flow is defined as
$$h_{\text{Lip-top}}(\text{Geod}_\sigma(X)) = \inf_{\text{\texthtd}}\sup_{K\subseteq \text{Geod}_\sigma(X)} \lim_{r\to 0} \lim_{T\to +\infty} \frac{1}{T}\log \text{Cov}_{\text{\texthtd}^T}(K,r),$$
where \emph{the infimum is taken only among the geometric metrics of} Geod$_\sigma(X)$.
Although the definition of the Lipschitz-topological entropy is quite complicated, its computation can be remarkably simplified. Indeed one of the most used metric on Geod$_\sigma(X)$ (see for instance \cite{BL12}) is:
$$d_{\text{Geod}}(\gamma,\gamma') = \int_{-\infty}^{+\infty}d(\gamma(s),\gamma'(s))\frac{1}{2e^{\vert s \vert}} ds$$
that induces the topology of $\text{Geod}_\sigma(X)$ and is geometric, and it turns out that it realizes the infimum in the definition of the Lipschitz-topological entropy.

\begin{theorem}[Extract from Theorem \ref{lip-top_cov} \& Proposition \ref{entropyrelations}]
	\label{A}
	Let $(X,\sigma)$ be a \textup{GCB}-space that is $P_0$-packed at scale $r_0$.
	Then
	$$h_{\textup{Lip-top}}(\textup{Geod}_\sigma(X)) = \lim_{T \to +\infty}\frac{1}{T}\log \textup{Cov}_{d_{\textup{Geod}}^T}(\textup{Geod}_\sigma(x),r_0),$$
	where $\textup{Geod}_\sigma(x)$ is the set of $\sigma$-geodesic lines passing through $x$ at time $0$.	
\end{theorem}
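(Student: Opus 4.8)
The plan is to prove the theorem by establishing two inequalities, showing that the specific metric $d_{\textup{Geod}}$ realizes the infimum defining $h_{\textup{Lip-top}}$. The key conceptual point is that the packing condition $P_0$ at scale $r_0$, together with the convexity of the bicombing, controls the geometry at every scale (via Proposition \ref{packingsmallscales}), so that the covering numbers $\textup{Cov}_{d_{\textup{Geod}}^T}$ of the \emph{full} geodesic space through a point are comparable to those of arbitrary compact subsets, and the dependence on the radius $r$ disappears in the limit.

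\medskip

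\noindent\textbf{Step 1: Reduction to geodesics through a fixed point.} First I would observe that by cocompactness-type arguments (or rather, by the homogeneity afforded by geodesic completeness and convexity of $\sigma$), any compact subset $K \subseteq \textup{Geod}_\sigma(X)$ has its time-$0$ evaluation $E(K)$ contained in a ball $B(x,R)$. Since a geometric metric makes $E$ Lipschitz, two $\sigma$-geodesics that are $d_{\textup{Geod}}^T$-close must have nearby time-$0$ points; conversely I want to compare $\textup{Cov}_{d_{\textup{Geod}}^T}(K,r)$ with $\textup{Cov}_{d_{\textup{Geod}}^T}(\textup{Geod}_\sigma(x),r_0)$. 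The convexity of the bicombing lets me project geodesics based near $x$ onto geodesics through $x$ while controlling the $d_{\textup{Geod}}^T$-distance, so up to uniformly bounded multiplicative/additive constants the covering numbers of $K$ are dominated by those of $\textup{Geod}_\sigma(x)$.

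\medskip

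\noindent\textbf{Step 2: Independence from the radius $r$.} The crucial simplification is that the limit $\lim_{r \to 0}$ can be dropped and replaced by the single scale $r_0$. Here I would use the uniform packing at all scales: a $d_{\textup{Geod}}^T$-ball of radius $r$ can be covered by a number of $d_{\textup{Geod}}^T$-balls of radius $r_0$ that is bounded \emph{independently of $T$} (depending only on $P_0$, $r_0$, $r$), because the relevant geodesic segments live in a region of $X$ whose packing is uniformly controlled. This shows
$$
\lim_{T \to +\infty}\frac{1}{T}\log \textup{Cov}_{d_{\textup{Geod}}^T}(\textup{Geod}_\sigma(x),r) = \lim_{T \to +\infty}\frac{1}{T}\log \textup{Cov}_{d_{\textup{Geod}}^T}(\textup{Geod}_\sigma(x),r_0)
$$
for every $r$, so the $r$-limit is constant.

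\medskip

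\noindent\textbf{Step 3: The metric $d_{\textup{Geod}}$ realizes the infimum.} Finally I would prove that $d_{\textup{Geod}}$ achieves the infimum over geometric metrics. The inequality $h_{\textup{Lip-top}} \le \lim_T \tfrac{1}{T}\log \textup{Cov}_{d_{\textup{Geod}}^T}(\textup{Geod}_\sigma(x),r_0)$ is immediate since $d_{\textup{Geod}}$ is one admissible geometric metric and the right-hand quantity (by Steps 1--2) bounds its covering growth over all compact $K$. For the reverse inequality I would show that \emph{any} geometric metric \texthtd\ is, on the relevant compact sets and up to the exponential-growth scale, comparable to $d_{\textup{Geod}}$: the Lipschitz evaluation condition forces \texthtd-closeness at time $0$, and convexity of the bicombing propagates this to closeness on a whole time interval, which is exactly what the integral kernel $\tfrac{1}{2}e^{-|s|}$ of $d_{\textup{Geod}}$ measures. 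The main obstacle I anticipate is precisely this last comparison: controlling an \emph{arbitrary} geometric metric from below by $d_{\textup{Geod}}$ on the dynamical scale, since the Lipschitz hypothesis only directly constrains behaviour at time $0$ and one must use the convexity of $\sigma$ to transfer this control along the flow uniformly in $T$.
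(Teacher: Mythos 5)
Your architecture (reduce to $\textup{Geod}_\sigma(x)$, kill the dependence on the scale $r$, then show every geometric metric dominates $d_{\textup{Geod}}$ at the exponential scale) is the right shape, but the two load-bearing steps are asserted with justifications that do not hold as stated. In Step 2 you claim that a $d_{\textup{Geod}}^T$-ball of radius $r'$ is covered by a number of $d_{\textup{Geod}}^T$-balls of radius $r$ that is bounded independently of $T$, "because the relevant geodesic segments live in a region of $X$ whose packing is uniformly controlled." Taken literally this is false: the geodesics in $\overline{B}_{d_{\textup{Geod}}^T}(\gamma,r')$ sweep out a tube of length $T$ around $\gamma([0,T])$, whose covering number grows linearly in $T$. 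The actual mechanism (the paper's Key Lemma \ref{residualentropy}) is that, by convexity of $\sigma$, such a geodesic is pinned down to $f^T$-precision $r$ by its positions at just \emph{two} times, one slightly before $0$ and one slightly after $T$, each of which ranges over a set of diameter bounded in terms of $r,r',f$ only; one must then construct the connecting $\sigma$-geodesics, extend them to lines, and estimate the tails of the kernel $\tfrac12 e^{-|s|}$ to verify they are $r$-dense. The paper only extracts a subexponential bound $e^{\varepsilon K'T}$ for every $\varepsilon$ (which suffices); your stronger $T$-independent claim needs this entire construction, none of which is supplied. This is the heart of the theorem — without it neither the $r$-independence nor the reduction of $\textup{Geod}_\sigma(\overline{B}(x,R))$ to $\textup{Geod}_\sigma(x)$ goes through.

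In Step 3 the logic of the Lipschitz hypothesis is reversed. A geometric metric satisfies $d(\gamma(0),\gamma'(0))\le M\,\text{\texthtd}(\gamma,\gamma')$, so $\text{\texthtd}^T(\gamma,\gamma')\le r$ already forces $d(\gamma(t),\gamma'(t))\le Mr$ for \emph{every} $t\in[0,T]$ simply because $\text{\texthtd}^T$ takes the maximum over $t$ of $\text{\texthtd}(\Phi_t\gamma,\Phi_t\gamma')$ — no convexity is needed to "propagate" anything, and the obstacle you anticipate is not the real one. From there two correct exits exist. Yours (show each $\text{\texthtd}^T$-ball of radius $r$ sits inside a $d_{\textup{Geod}}^T$-ball of radius $Mr+C(f)$, hence $\textup{Cov}_{\text{\texthtd}^T}(K,r)\ge \textup{Cov}_{d_{\textup{Geod}}^T}(K,Mr+C(f))$, then rescale) is viable but again consumes the Key Lemma to return from scale $Mr+C(f)$ to $r_0$. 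The paper instead avoids comparing metrics altogether: it observes that $\gamma\mapsto\gamma(T)$ maps an $r_0$-net of $(\textup{Geod}_\sigma(x),\text{\texthtd}^T)$ onto an $Mr_0$-net of the sphere $S(x,T)$, so $\textup{Cov}_{\text{\texthtd}^T}(\textup{Geod}_\sigma(x),r_0)\ge \textup{Cov}(S(x,T),Mr_0)$, and then identifies both sides with the covering entropy $h_{\textup{Cov}}(X)$ via Propositions \ref{spherical-entropy} and \ref{cov-ent-pack}; this common yardstick is what makes the infimum attained. As written, your proposal is missing the Key Lemma and misdiagnoses the final comparison, so it does not yet constitute a proof.
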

\noindent Therefore the infimum in the definition of the Lipschitz topological entropy is actually realized by the metric $d_{\text{Geod}}$ and the supremum among the compact sets can be replaced by a fixed (relatively small) compact set. Moreover also the scale $r$ can be fixed to be $r_0$ (or any other positive real number).\\
Actually the result of Theorem \ref{A} is still valid for a whole family of metrics on Geod$_\sigma(X)$: this flexibility will be one of the fundamental ingredient in the main result of \cite{Cav21bis}.

\subsection{Volume and Covering entropy}
The second definition of entropy we consider (see Section \ref{subsec-volume}) is the volume entropy.
If $X$ is a metric space equipped with a measure $\mu$ it is classical to consider the exponential growth rate of the volume of balls, namely:
$$h_\mu(X) :=\lim_{T\to+\infty}\frac{1}{T}\log \mu(\overline{B}(x,T)).$$
It is called the {\em volume entropy} of $X$ with respect to the measure $\mu$ and it does not depend on the choice of the basepoint $x\in X$ by triangular inequality. 
This invariant has been studied intensively in case of complete Riemannian manifolds with non positive sectional curvature, where $\mu$ is the Riemannian volume on the universal cover.
It is related to other interesting invariants as the simplicial volume of the manifold (see \cite{Gro82}), \cite{BS20}), a macroscopical condition on the scalar curvature (cp. \cite{Sab17}) and the systole in case of compact, non-geometric $3$-manifolds (cp. \cite{CerS19}).
Moreover the infimum of the volume entropy among all the possible Riemannian metrics of volume $1$ on a fixed closed manifold is a subtle homotopic invariant (see \cite{Bab93}, \cite{Bru08} for general considerations and \cite{BCG95}, \cite{Pie19} for the computation of the minimal volume entropy in case of, respectively, closed $n$-dimensional manifolds supporting a locally symmetric metric of negative curvature and $3$-manifolds). 
\vspace{2mm}

%Another example, besides the Riemannian setting, is the counting measure of the orbit of a discrete, cocompact group of isometries of a Gromov-hyperbolic, GCB-space.
\noindent A measure $\mu$ is called $H$-homogeneous at scale $r$ if
$$\frac{1}{H} \leq \mu(\overline{B}(x,r)) \leq H$$
for every $x\in X$. Among homogeneous measures there is a remarkable example: the volume measure $\mu_X$ of a complete, geodesically complete, CAT$(0)$ metric space $X$ that is $P_0$-packed at scale $r_0$ (see \cite{CavS20} and \cite{LN19} for a description of the measure). In case $X$ is a Riemannian manifold of non-positive sectional curvature then $\mu_X$ coincides with the Riemannian volume, up to a universal multiplicative constant. \\
A more combinatoric and intrinsic version of the volume entropy of a generic metric space is the \emph{covering entropy}, defined as:
$$h_{\text{Cov}}(X):=\lim_{T\to+\infty}\frac{1}{T}\log \text{Cov}(\overline{B}(x,T), r),$$
where $x$ is a point of $X$ and $\text{Cov}(\overline{B}(x,T), r)$ is the minimal number of balls of radius $r$ needed to cover $\overline{B}(x,T)$. It does not depend on $x$ but it can depend on the choice of $r$. This is not the case when $X$ is a GCB-space that is $P_0$-packed at scale $r_0$, as follows by Proposition \ref{cov-ent-pack}. Moreover it is always finite (cp.Lemma \ref{entropy-bound}).

\subsection{Minkowski dimension of the boundary}
The expression of the Lipschitz-topological entropy given by Theorem \ref{A} suggests the possibility to relate that invariant to some property of the boundary at infinity of $X$.
For simplicity we suppose $X$ is also Gromov-hyperbolic, so that the boundary at infinity is metrizable.
If we denote by $(\cdot,\cdot)_x$ the Gromov product based on $x$ then the generalized visual ball of center $z\in \partial X$ and radius $\rho$ is $B(z,\rho)=\lbrace z'\in \partial X \text{ s.t. } (z,z')_x> \log\frac{1}{\rho}\rbrace$. The {\em visual Minkowski dimension} of the Gromov boundary $\partial X$ is:
$$\text{MD}(\partial X) = \lim_{T\to +\infty}\frac{1}{T}\log\text{Cov}(\partial X, e^{-T}),$$
where $\text{Cov}(\partial X, e^{-T})$ is the minimal number of generalized visual balls of radius $e^{-T}$ needed to cover $\partial X$. If the generalized visual balls are metric balls for some visual metric $D_{x,a}$ then we refind the usual definition of Minkowski dimension of the metric space $(\partial X, D_{x,a})$, once the obvious change of variable $\rho = e^{-T}$ is made. This invariant is presented in Section \ref{subsec-minkowski}. 

%
%
% Once fixed a basepoint $x\in X$ we define the {\em shadow dimension} of the $\sigma$-boundary $\partial_\sigma X$ of $X$ as 
%$$\text{Shad-D}(\partial_\sigma X)=\lim_{T\to +\infty}\frac{1}{T}\log \text{Shad-Cov}_r(\partial X, e^{-T}),$$
%where $\text{Shad-Cov}_r(\partial X, e^{-T})$ is the minimal number of points $y_1,\ldots,y_N$ at distance $T$ from $x$ such that every $\sigma$-geodesic ray issuing from $x$ passes through one of the balls of radius $r$ and center $y_i$. The limit above does not depend neither on $x$ nor on $r$. It describes the asymptotic behaviour of the number of shadows casted by points at distance $T$ from $x$ needed  to cover $\partial_\sigma X$, when $T$ goes to $+\infty$. The shadows, and especially their relations with other properties of the boundary at infinity, have been intensively studied during the years (starting from \cite{Sul79}). In particular if $X$ is also Gromov-hyperbolic the boundary at infinity can be equipped with a metric and it turns out that the metric balls are approximately equivalent to the shadows. This equivalence remains true when we consider the generalized visual balls. 

\subsection{Equality of the entropies}
\label{subsec-intro-asymp}
One of our main results is:
\begin{theorem}
	\label{B}
	Let $(X,\sigma)$ be a \textup{GCB}-space that is $P_0$-packed at scale $r_0$. Then
	$$h_{\textup{Lip-top}}(\textup{Geod}_\sigma(X)) = h_\mu(X) = h_\textup{Cov}(X),$$
	where $\mu$ is every homogeneous measure on $X$.
	Moreover if $X$ is also $\delta$-hyperbolic then the quantities above coincide also with $\textup{MD}(\partial X)$.
\end{theorem}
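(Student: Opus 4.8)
The plan is to prove three two-sided comparisons of counting functions and then read off the equalities of exponential growth rates. Every quantity in the statement has the form $\lim_{T\to\infty}\frac1T\log N(T)$, so in each case it suffices to show that the relevant counting functions agree up to a multiplicative constant and a bounded shift of the parameter $T$, since such perturbations disappear in the limit. The starting point is Theorem \ref{A}, which already rewrites $h_{\textup{Lip-top}}(\textup{Geod}_\sigma(X))$ as $\lim_{T\to\infty}\frac1T\log\textup{Cov}_{d_{\textup{Geod}}^T}(\textup{Geod}_\sigma(x),r_0)$, a covering number of the $\sigma$-geodesic lines through $x$ in the dynamical metric. I would therefore compare, in turn: (a) the volume and covering entropies; (b) this covering number of geodesics with the covering entropy of balls; and (c) in the hyperbolic case, the covering entropy with the visual Minkowski dimension.

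For (a) I would use only the $H$-homogeneity of $\mu$ and the packing condition. Covering $\overline B(x,T)$ by $\textup{Cov}(\overline B(x,T),r)$ balls of radius $r$ and summing the measures gives $\mu(\overline B(x,T))\le H\cdot\textup{Cov}(\overline B(x,T),r)$; conversely, a maximal $r$-separated subset of $\overline B(x,T)$ yields disjoint balls of radius $r/2$, each of measure at least $1/H$ (using homogeneity at scale $r/2$, available after adjusting constants thanks to the packing-at-all-scales statement of Proposition \ref{packingsmallscales}), so that this separated set---whose cardinality dominates $\textup{Cov}(\overline B(x,T),r)$ up to the packing constant---has measure at most $H\mu(\overline B(x,T+r))$. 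Taking logarithms and dividing by $T$, the constants $H$, the packing factor, and the additive shift $r$ all vanish, yielding $h_\mu(X)=h_{\textup{Cov}}(X)$; the independence of $h_{\textup{Cov}}$ from $r$ comes from Proposition \ref{cov-ent-pack}.

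The heart of the argument is (b), where the mechanism is the evaluation map $\gamma\mapsto\gamma(T)$ from $\textup{Geod}_\sigma(x)$ to the sphere $S(x,T)$. Geodesic completeness of $\sigma$ guarantees that every point of $\overline B(x,T)$ lies on a $\sigma$-geodesic line through $x$, so the traces of such lines fill the ball and the evaluation map surjects onto $S(x,T)$. The key quantitative input is convexity of the bicombing: for two lines $\gamma,\gamma'$ with $\gamma(0)=\gamma'(0)=x$ the function $t\mapsto d(\gamma(t),\gamma'(t))$ is convex and vanishes at $0$, whence
$$d(\gamma(t),\gamma'(t))\le\frac{t}{T}\,d(\gamma(T),\gamma'(T))\le d(\gamma(T),\gamma'(T))\qquad\text{for }t\in[0,T].$$
This shows that $\gamma$ and $\gamma'$ fellow-travel on all of $[0,T]$ as soon as their time-$T$ endpoints are close, and conversely that separation at time $T$ forces separation of the lines; combined with the fact that $\sigma$-geodesics are $1$-Lipschitz (which bounds the forward tail $t>T$ and the backward tail $t<0$ of the weighted integral defining $d_{\textup{Geod}}$), it gives a two-sided comparison between $d_{\textup{Geod}}^T(\gamma,\gamma')$ and $d(\gamma(T),\gamma'(T))$ up to fixed multiplicative constants. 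Hence $\textup{Cov}_{d_{\textup{Geod}}^T}(\textup{Geod}_\sigma(x),r_0)$ is comparable to $\textup{Cov}(S(x,T),r_0')$, and, since in a $P_0$-packed GCB-space the covering numbers of the sphere $S(x,T)$ and of the ball $\overline B(x,T)$ share the same exponential growth rate (the ball being exhausted by the spheres $S(x,t)$, $t\le T$, with an at most geometric accumulation of counts), also to $\textup{Cov}(\overline B(x,T),r_0'')$ at the level of rates. Passing to exponential rates gives $h_{\textup{Lip-top}}(\textup{Geod}_\sigma(X))=h_{\textup{Cov}}(X)$.

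Finally, for the hyperbolic case I would identify visual balls with shadows. In a $\delta$-hyperbolic space the condition $(z,z')_x>T$ defining $B(z,e^{-T})$ means, up to an additive error controlled by $\delta$, that the rays $[x,z)$ and $[x,z')$ fellow-travel up to radius $T$, i.e. that their time-$T$ points are $O(\delta)$-close on $S(x,T)$; using completeness and hyperbolicity to represent boundary points by rays from $x$, this makes $\textup{Cov}(\partial X,e^{-T})$ comparable to $\textup{Cov}(S(x,T),c\delta)$, hence to the growth of $\textup{Cov}(\overline B(x,T),r)$ by the previous step, so that $\textup{MD}(\partial X)=h_{\textup{Cov}}(X)$. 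I expect the main obstacle to lie in part (b): translating the weighted, bi-infinite integral metric $d_{\textup{Geod}}^T$---which a priori records the behavior of the whole geodesic lines, including the tails $t<0$ and $t>T$---into the single-time comparison with $d(\gamma(T),\gamma'(T))$ cleanly enough that the comparison constants do not pollute the exponential limit. The convexity inequality above is exactly what controls the essential window $[0,T]$, and the remaining care lies in estimating the exponentially weighted tails via the Lipschitz bound and in matching the sphere and ball covering rates uniformly in $T$.
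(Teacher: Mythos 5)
Your proposal is correct and follows essentially the same route as the paper: step (a) is Proposition \ref{cov-vol-asym}, step (b) reproduces the convexity/evaluation-map argument behind Proposition \ref{entropyrelations}.(iii), Theorem \ref{lip-top_cov} and Proposition \ref{spherical-entropy}, and step (c) is Proposition \ref{ent+mink}. The only imprecision is that the comparison between $d_{\textup{Geod}}^T(\gamma,\gamma')$ and $d(\gamma(T),\gamma'(T))$ is additive (up to $C(f)$) rather than multiplicative, but the resulting change of covering scale is absorbed exactly as you indicate, via Proposition \ref{cov-ent-pack} for balls and spheres and the scale-independence of the dynamical covering numbers built into Theorem \ref{A} (i.e.\ Proposition \ref{entropyrelations}.(iv)).
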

Actually something more is true but in order to state it we need to recall the notion of equivalent asymptotic behaviour of two functions introduced in \cite{Cav21ter}. Given $f,g\colon [0,+\infty) \to \mathbb{R}$ we say that {\em $f$ and $g$ have the same asymptotic behaviour}, and we write $f\asymp g$, if for all $\varepsilon > 0$ there exists $T_\varepsilon \geq 0$ such that if $T\geq T_\varepsilon$ then $\vert f(T) -  g(T) \vert \leq \varepsilon$. The function $T_\varepsilon$ is called the \emph{threshold function}.
Usually we will write $f\underset{P_0,r_0,\delta,\ldots}{\asymp} g$ meaning that the threshold function can be expressed only in terms of $\varepsilon$ and $P_0,r_0,\delta,\ldots$
\begin{theorem}
	\label{C}
	Let $(X,\sigma)$ be a \textup{GCB}-space that is $P_0$-packed at scale $r_0$. Then the functions defining the quantities of Theorem \ref{B} have the same asymptotic behaviour and the threshold functions depend only on $P_0,r_0,\delta$ and the homogeneous constants of $\mu$.
\end{theorem}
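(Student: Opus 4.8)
The plan is to prove Theorem~\ref{C} as a quantitative refinement of Theorem~\ref{B}: instead of only passing to the limit, I would keep explicit two-sided comparisons between the four functions whose limits define the entropies, namely $f_{\text{Lt}}(T)=\frac1T\log\text{Cov}_{d_{\text{Geod}}^T}(\text{Geod}_\sigma(x),r_0)$ (which computes $h_{\text{Lip-top}}$ by Theorem~\ref{A}), $f_\mu(T)=\frac1T\log\mu(\overline B(x,T))$, $f_{\text{Cov}}(T)=\frac1T\log\text{Cov}(\overline B(x,T),r)$, and, in the hyperbolic case, $f_{\text{MD}}(T)=\frac1T\log\text{Cov}(\partial X,e^{-T})$. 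The relation $\asymp$ is transitive with thresholds combining as $T_\varepsilon=\max(T^1_{\varepsilon/2},T^2_{\varepsilon/2})$, so uniformity is preserved along a chain, and it suffices to compare $f_\mu$ with $f_{\text{Cov}}$, then $f_{\text{Cov}}$ with $f_{\text{Lt}}$, and finally $f_{\text{Cov}}$ with $f_{\text{MD}}$, verifying each time that the comparison constants depend only on $P_0,r_0,\delta$ and the homogeneity constant $H$. The recurring mechanism is that a $T$-independent multiplicative constant $C$ in an inequality $a(T)\le C\,b(T)$ contributes $\frac{\log C}{T}$ after applying $\frac1T\log$, hence a threshold $T_\varepsilon=\frac{\log C}{\varepsilon}$ of the required form.

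For $f_\mu\asymp f_{\text{Cov}}$ I would combine homogeneity with packing at every scale (Proposition~\ref{packingsmallscales}). Covering $\overline B(x,T)$ by $N=\text{Cov}(\overline B(x,T),r)$ balls of radius $r$ and bounding each by $H$ gives $\mu(\overline B(x,T))\le HN$, hence $f_\mu(T)\le f_{\text{Cov}}(T)+\frac{\log H}{T}$. For the reverse inequality I would use the honest interior packing: a maximal $2r$-separated set in $\overline B(x,T-r)$ has disjoint $r$-balls, each of measure at least $1/H$ and entirely contained in $\overline B(x,T)$, so its cardinality is at most $H\mu(\overline B(x,T))$, while it dominates $\text{Cov}(\overline B(x,T-r),2r)$. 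The radius shift $T-r\mapsto T$ and the scale change $2r\mapsto r$ are then absorbed by a uniform quasi-monotonicity $\text{Cov}(\overline B(x,T+a),s)\le C\,\text{Cov}(\overline B(x,T),s')$ with $C$ independent of $T$: by geodesic completeness every annulus point lies within $a$ of $S(x,T)$, and packing at every scale bounds the number of small balls needed there by a constant depending only on $P_0,r_0,a,s,s'$. This yields $|f_\mu(T)-f_{\text{Cov}}(T)|\le\frac{K}{T}$ with $K=K(P_0,r_0,H)$, and Proposition~\ref{cov-ent-pack} removes the dependence on $r$.

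The comparison $f_{\text{Cov}}\asymp f_{\text{Lt}}$ is the geometric core and, I expect, the main obstacle. Using geodesic completeness, evaluation at time $T$ maps $\text{Geod}_\sigma(x)$ onto the sphere $S(x,T)$, and convexity of the bicombing controls $d_{\text{Geod}}^T(\gamma,\gamma')$ in terms of $d(\gamma(T),\gamma'(T))$ up to an error produced by the exponentially weighted tail of the integral defining $d_{\text{Geod}}$. The delicate point is to convert a covering of $\text{Geod}_\sigma(x)$ at scale $r_0$ in the dynamical metric $d_{\text{Geod}}^T$ into a covering of $S(x,T)$, hence of $\overline B(x,T)$, at a fixed geometric scale, with a multiplicative loss that is controlled by packing at every scale and is independent of $T$. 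This is exactly the content extracted from the proof of Theorem~\ref{lip-top_cov}, and keeping its constants explicit gives $|f_{\text{Cov}}(T)-f_{\text{Lt}}(T)|\le\frac{K'}{T}$ with $K'=K'(P_0,r_0)$.

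Finally, for $f_{\text{Cov}}\asymp f_{\text{MD}}$ in the $\delta$-hyperbolic case I would use that $(z,z')_x>T$ forces the rays towards $z$ and $z'$ to stay within distance $\approx 2\delta$ of each other up to time $T$, so that, under evaluation at time $T$, a generalized visual ball of radius $e^{-T}$ corresponds to a geometric ball of radius $\approx 2\delta$ on $S(x,T)$, and conversely up to a bounded additive error in the Gromov product. Thus covering $\partial X$ by visual balls of radius $e^{-T}$ is comparable, by a $T$-independent factor depending only on $\delta,P_0,r_0$, to covering $S(x,T)$ at scale $2\delta$, which the sphere-to-ball comparison of the second step relates to $f_{\text{Cov}}$. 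Assembling the three comparisons along the chain then gives Theorem~\ref{C}, with threshold functions depending only on $P_0,r_0,\delta$ and the homogeneity constants of $\mu$.
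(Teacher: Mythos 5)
Your proposal is correct and follows essentially the same route as the paper, which obtains Theorem~\ref{C} by chaining the uniform $\asymp$ comparisons of Proposition~\ref{cov-ent-pack}, Proposition~\ref{cov-vol-asym}, Proposition~\ref{spherical-entropy}, Proposition~\ref{ent+mink} and Remark~\ref{asymptotic-lipschitz} (the last resting on the Key Lemma~\ref{residualentropy} and the proofs of Theorem~\ref{lip-top_cov} and Proposition~\ref{entropyrelations}), each carrying threshold functions depending only on $P_0,r_0,\delta$ and the homogeneity constants. The only caveat is that in the Lipschitz-topological step the error is not literally of the form $\frac{\log C}{T}$ with $C$ independent of $T$ --- the scale-change factor $\sup_\gamma \textup{Cov}_{f^T}(\overline{B}_{f^T}(\gamma,r'),r)$ grows with $T$ and is only shown to be subexponential with uniform threshold by the Key Lemma --- but you correctly defer that step to the explicit constants in the proof of Theorem~\ref{lip-top_cov}, so the argument stands.
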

Therefore not only all the introduced quantities define the same number, but all of them also have the same asymptotic behaviour. This means that if one can control the rate of convergence to the limit of one of these quantities then also the rate of convergence of all the other quantities is bounded.
We remark that, differently from many of the papers in the literature, we do not require any group action on our metric spaces. \\
The control of the asymptotic behaviour of the function defining the Minkowski dimension is the main ingredient of the continuity theorem proved in \cite{Cav21ter}. The same ideas can be used to show similar continuity statements in more general settings, but we won't explore these applications here.

\subsection{Entropies of the closed subsets of the boundary}
In case $X$ is $\delta$-hyperbolic it is possible to define the versions of all the different notions of entropies relative to subsets of the boundary $\partial X$. 
For every subset $C\subseteq \partial X$ we denote by Geod$_\sigma(C)$ the set of parametrized $\sigma$-geodesic lines with endpoints belonging to $C$ and with QC-Hull$(C)$ the union of the points belonging to the geodesics joining any two points of $C$. Actually the hyperbolicity assumption (or at least a visibility assumption on $\partial X$) is necessary since otherwise the sets Geod$_\sigma(C)$ and QC-Hull$(C)$ could be empty.
The numbers
\begin{equation*}
	\begin{aligned}
		h_{\text{Cov}}(C) &= \lim_{T\to +\infty}\frac{1}{T}\log \text{Cov}(\overline{B}(x,T) \cap \text{QC-Hull}(C), r_0) \\
		h_{\text{Lip-top}}(\text{Geod}_\sigma(C)) &= \inf_{\text{\texthtd}}\sup_{K\subseteq \text{Geod}_\sigma(C)} \lim_{r\to +\infty} \lim_{T\to +\infty} \frac{1}{T}\log \text{Cov}_{\text{\texthtd}^T}(K,r) \\
		\text{MD}(C) &= \lim_{T\to +\infty}\frac{1}{T}\log\text{Cov}(C, e^{-T})
	\end{aligned}
\end{equation*}
are called, respectively, covering entropy of $C$, Lipschitz-topological entropy of Geod$_\sigma(C)$ and visual Minkowski dimension of $C$. The volume entropy of $C$ with respect to a measure $\mu$ is
$$h_\mu(C) = \sup_{\tau \geq 0}\lim_{T\to +\infty}\frac{1}{T}\log \mu(\overline{B}(x,T) \cap  \overline{B}(\text{QC-Hull}(C), \tau)),$$
where $\overline{B}(Y,\tau)$ is the closed $\tau$-neighbourhood of $Y\subseteq X$. If $\mu$ is $H$-homogeneous at scale $r$ then the volume entropy can be computed taking $\tau = r$ in place of the supremum (Proposition \ref{closed-volume-entropy}). For instance when $X$ is a Riemannian manifold with pinched negative curvature then the the Riemannian volume $\mu_X$ is $H(r)$-homogeneous at every scale $r>0$, so the definition does not depend on $\tau$ at all.  Most of the relations of Theorem \ref{B} remain true for subsets of the boundary, but the asymptotic behaviour of the different functions involved in the definitions of the entropies depend also on the choice of the basepoint $x\in X$. The best possible choice, $x\in \text{QC-Hull}(C)$, allows us to give again uniform asymptotic estimates.
\begin{theorem}
	\label{E}
	Let $(X,\sigma)$ be a $\delta$-hyperbolic \textup{GCB}-space that is $P_0$-packed at scale $r_0$ and let $C\subseteq \partial X$. Then
	$$h_\textup{Cov}(C) = \textup{MD}(C) = h_\mu(C)$$
	for every homogeneous measure $\mu$ on $X$.
	All the functions defining the quantities above have the same asymptotic behaviour and the threshold functions can be expressed only in terms of $P_0,r_0,\delta$ and the homogeneous constants of $\mu$, if the basepoint $x$ belongs to \textup{QC-Hull}$(C)$.
\end{theorem}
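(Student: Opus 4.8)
The plan is to prove the three equalities together with the quantitative $\asymp$ statement by establishing, for each pair of the \emph{defining} functions, a two-sided comparison of the shape $c\,g(T-a)\le f(T)\le C\,g(T+a)$ with constants $c,C,a$ depending only on $P_0,r_0,\delta$ and the homogeneity constants of $\mu$; taking logarithms and dividing by $T$ then forces equality of the limits and, since the multiplicative factors and the shift $a$ wash out at an explicitly controlled rate, also produces threshold functions $T_\varepsilon$ of the claimed form (this is exactly the bookkeeping of the $\asymp$ machinery of \cite{Cav21ter}). Fix $x\in\textup{QC-Hull}(C)$ and abbreviate $N_{\textup{Cov}}(T)=\textup{Cov}(\overline B(x,T)\cap\textup{QC-Hull}(C),r_0)$, $N_{\textup{MD}}(T)=\textup{Cov}(C,e^{-T})$ and, after invoking Proposition~\ref{closed-volume-entropy} to fix $\tau=r$, $V(T)=\mu(\overline B(x,T)\cap\overline B(\textup{QC-Hull}(C),r))$. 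If the common rate is $0$ the statement is trivial, so I may assume it is positive, which I will use to pass from coverings of balls to coverings of their outermost spheres.

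First I would compare $V$ and $N_{\textup{Cov}}$, a step that uses only the packing and the homogeneity of $\mu$, not the hyperbolicity. Choosing a maximal $r_0$-separated subset $\{y_i\}\subseteq\overline B(x,T)\cap\textup{QC-Hull}(C)$, its cardinality is comparable to $N_{\textup{Cov}}(T)$ since separated sets and coverings are interchangeable up to a factor depending on the packing (Proposition~\ref{packingsmallscales}). The disjoint balls $B(y_i,r_0/2)$ lie in $\overline B(x,T+r_0)$ within distance $r_0$ of $\textup{QC-Hull}(C)$, while the balls $B(y_i,r_0)$ cover $\overline B(x,T)\cap\textup{QC-Hull}(C)$; homogeneity of $\mu$ at scale $r$ (and packing, to match the scales $r_0$ and $r$) then bounds $V$ above and below by constant multiples of the cardinality of $\{y_i\}$, up to the harmless shift $T\mapsto T+r_0$. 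This shows the functions defining $h_\mu(C)$ and $h_{\textup{Cov}}(C)$ are $\asymp$.

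The main work, and the only genuinely hyperbolic step, is the comparison of $N_{\textup{Cov}}$ with $N_{\textup{MD}}$. For $z\in C$ let $\gamma_z$ be the $\sigma$-geodesic ray from $x$ to $z$ and set $p_T(z)=\gamma_z(T)$. Because $x\in\textup{QC-Hull}(C)$, $\delta$-thinness of the ideal triangles with vertices in $\{x\}\cup C$ keeps each $\gamma_z$ inside $\overline B(\textup{QC-Hull}(C),\kappa\delta)$ and, conversely, shows that every $w\in\textup{QC-Hull}(C)$ lies within $\kappa\delta$ of some ray $\gamma_z$; thus $\overline B(x,T)\cap\textup{QC-Hull}(C)$ and $\bigcup_{z\in C}\gamma_z([0,T])$ are within bounded Hausdorff distance. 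The decisive estimate is the hyperbolic dictionary $(z,z')_x\ge T\iff d(p_T(z),p_T(z'))\le\kappa'\delta$, which identifies the generalized visual ball $B(z,e^{-T})$ with a metric ball of radius $\approx\delta$ around $p_T(z)$ on the sphere $S(x,T)$. Hence a covering of $C$ by visual balls of radius $e^{-T}$ corresponds, with multiplicity bounded in terms of $P_0$, to a covering of $\{p_T(z):z\in C\}\subseteq S(x,T)\cap\overline B(\textup{QC-Hull}(C),\kappa\delta)$ by $r_0$-balls, and vice versa, giving $N_{\textup{MD}}(T)$ comparable to $\textup{Cov}(S(x,T)\cap\textup{QC-Hull}(C),r_0)$. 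Finally, positivity of the rate together with $P_0$-packing makes the covering number of the ball comparable to that of its outermost sphere, so $\textup{Cov}(S(x,T)\cap\textup{QC-Hull}(C),r_0)\asymp N_{\textup{Cov}}(T)$, closing the chain.

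I expect the hyperbolic dictionary to be the principal obstacle: one must control the additive $\delta$-corrections in $(z,z')_x\ge T\iff d(p_T(z),p_T(z'))\le\kappa'\delta$ uniformly and, crucially, verify that $z\mapsto p_T(z)$ is coarsely surjective onto $S(x,T)\cap\textup{QC-Hull}(C)$ with overlaps bounded purely by $P_0$. This is precisely where the hypothesis $x\in\textup{QC-Hull}(C)$ enters, since for a general basepoint an initial segment of each $\gamma_z$ escapes the hull and introduces a basepoint-dependent defect. Assembling the two comparisons and tracking that every constant and every shift depends only on $P_0,r_0,\delta$ and the homogeneity constants of $\mu$ then yields both the equalities $h_\textup{Cov}(C)=\textup{MD}(C)=h_\mu(C)$ and the uniform threshold functions.
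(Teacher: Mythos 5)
Your overall route is the paper's: you compare the volume function with the covering function of the ball via homogeneity and packing (this is Proposition \ref{closed-volume-entropy} together with Proposition \ref{cov-ent-closed}), and you compare $\textup{Cov}(C,e^{-T})$ with $\textup{Cov}(S(x,T)\cap\textup{QC-Hull}(C),r_0)$ via the rays $[x,z]$, the approximation of rays by geodesic lines with endpoints in $C$, and the two-sided relation between Gromov products and distances along rays (this is Proposition \ref{closed-ent-mink}, resting on Lemmas \ref{approximation-ray-line}, \ref{approximation-line-ray} and \ref{product-rays}); your observation that $x\in\textup{QC-Hull}(C)$ is precisely what makes the constant $L$ equal to $22\delta$ and hence the thresholds uniform is correct.

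The gap is in the remaining link $\textup{Cov}(S(x,T)\cap\textup{QC-Hull}(C),r_0)\asymp\textup{Cov}(\overline B(x,T)\cap\textup{QC-Hull}(C),r_0)$, which you justify by ``positivity of the rate together with $P_0$-packing.'' This does not work. First, the reduction to positive rate is illegitimate for the quantitative half of the theorem: two functions both tending to $0$ automatically have the same limit, but their difference need not become small past a threshold depending only on $P_0,r_0,\delta$, so the rate-$0$ case is not trivial. Second, even granting a positive rate $h$ for the ball function, the decomposition into annuli only tells you that \emph{some} annulus carries covering number at least $\frac{r_0}{T}e^{hT}$ up to constants --- not the outermost one --- so you cannot conclude anything about $S(x,T)$. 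What is actually needed is the radial-projection argument of Proposition \ref{closed-spherical-entropy}: for $y$ in the hull one travels along the $\sigma$-geodesic line of $\textup{QC-Hull}(C)$ through $y$ out to a point $y_T$ at distance $T$ from $x$, uses Lemma \ref{projection-quasigeodesic} to compare this path with the $\sigma$-geodesic $[x,y_T]$ up to $72\delta$, and then uses convexity of the bicombing to pull an $r$-covering of $S(x,T)\cap\textup{QC-Hull}(C)$ back to a $(72\delta+2r)$-covering of each annulus; the resulting multiplicative factor $T/r$ is what washes out in $\frac{1}{T}\log$, with no positivity assumption. Note also the subtlety that extending $[x,y]$ radially may leave the hull (its endpoint on $S(x,T)$ need not lie in $\textup{QC-Hull}(C)$), which is exactly why the paper routes the extension along the geodesic line through $y$ with endpoints in $C$; your sketch does not address this point.
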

The proof of this result does not follow by the same arguments of Theorem \ref{B}, indeed it will be based heavily on the Gromov-hyperbolicity of $X$.
The relation between the Lipschitz-topological entropy of Geod$_\sigma(C)$ and the other definitions of entropy is more complicated. We have
\begin{theorem}
	\label{G}
	Let $(X,\sigma)$ be a $\delta$-hyperbolic \textup{GCB}-space that is $P_0$-packed at scale $r_0$ and let $C\subseteq \partial X$. Then
	\begin{itemize}
		\item[(i)] if $C$ is closed then $h_{\textup{Lip-top}}(\textup{Geod}_\sigma(C)) = h_\textup{Cov}(C)$ and the functions defining these two quantities have the same asymptotic behaviour with thresholds function depending only on $P_0,r_0,\delta$.\\
		More precisely, if $x\in \textup{QC-Hull}(C)$ then $$h_{\textup{Lip-top}}(\textup{Geod}_\sigma(C)) = \lim_{T \to +\infty}\frac{1}{T}\log \textup{Cov}_{d_{\textup{Geod}}^T}(\textup{Geod}_\sigma(\overline{B}(x,22\delta), C),r_0),$$
		where $\textup{Geod}_\sigma(\overline{B}(x,22\delta), C)$ is the set of geodesic lines with endpoints in $C$ and passing through $\overline{B}(x,22\delta)$ at time $0$.
		\item[(ii)] if $C$ is not closed then 
		$$h_{\textup{Lip-top}}(\textup{Geod}_\sigma(C)) = \sup_{C'\subseteq C} h_{\textup{Lip-top}}(\textup{Geod}_\sigma(C')) \leq h_{\textup{Cov}}(C),$$
		where the supremum is taken among the closed subsets of $C$.
	\end{itemize}
\end{theorem}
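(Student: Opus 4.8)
The plan is to treat part (i) as the core and to deduce part (ii) formally from it. For part (i) I first reduce the intrinsic definition of $h_{\textup{Lip-top}}(\textup{Geod}_\sigma(C))$ — an infimum over geometric metrics and a supremum over compact subsets — to the single covering function $T\mapsto \textup{Cov}_{d_{\textup{Geod}}^T}(\textup{Geod}_\sigma(\overline{B}(x,22\delta),C),r_0)$, exactly as in the absolute case. That $d_{\textup{Geod}}$ realizes the infimum among geometric metrics, that the scale may be fixed to $r_0$, and that the supremum over compact sets is attained (asymptotically) by a fixed bounded set of geodesics are all obtained by adapting the argument of Theorem \ref{A}; the uniform packing (Proposition \ref{packingsmallscales}) guarantees that enlarging the evaluation point $x$ to the ball $\overline{B}(x,22\delta)$ costs only a bounded multiplicative factor and hence does not change the exponential rate. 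Once this reduction is in place, everything comes down to a two-sided comparison, with explicit thresholds in $P_0,r_0,\delta$, between this dynamical covering function and the static one $T\mapsto \textup{Cov}(\overline{B}(x,T)\cap\textup{QC-Hull}(C),r_0)$ defining $h_{\textup{Cov}}(C)$.

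The first comparison rests on a discretization lemma: since the weight $\tfrac{1}{2e^{|s|}}$ concentrates near $s=0$, the distance $d_{\textup{Geod}}^T(\gamma,\gamma')$ is comparable, up to a bounded additive error, to $\max_{t\in[0,T]}d(\gamma(t),\gamma'(t))$, so that $d_{\textup{Geod}}^T$-closeness amounts to fellow-travelling on a window $[-c,T+c]$ with $c=c(r_0)$. As $X$ is $\delta$-hyperbolic, the restriction of a $\sigma$-geodesic to such a window is determined up to bounded Hausdorff distance by its two endpoints, and the backward endpoint ranges over a bounded window contributing only a constant factor. Hence $\textup{Cov}_{d_{\textup{Geod}}^T}(\textup{Geod}_\sigma(\overline{B}(x,22\delta),C),r_0)$ is bounded above, up to constants, by the number of $r_0$-distinct forward points $\gamma(T)$; each such point lies on a $\sigma$-geodesic with endpoints in $C$, hence within bounded distance of $\textup{QC-Hull}(C)$, and satisfies $d(x,\gamma(T))\le T+22\delta$. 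Therefore the dynamical covering number is $\lesssim \textup{Cov}(\overline{B}(x,T+22\delta)\cap\textup{QC-Hull}(C),r_0')$, giving $h_{\textup{Lip-top}}(\textup{Geod}_\sigma(C))\le h_{\textup{Cov}}(C)$.

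The reverse inequality is where the hypothesis $x\in\textup{QC-Hull}(C)$ and the constant $22\delta$ are used essentially. Given $y\in\overline{B}(x,T)\cap\textup{QC-Hull}(C)$, the point $y$ lies near a $\sigma$-geodesic joining two points of $C$, and $x$ likewise lies near a $\sigma$-geodesic joining two points of $C$; a thin-triangle/visibility argument then produces a $\sigma$-geodesic line with both endpoints in $C$ that meets $\overline{B}(x,22\delta)$ at time $0$ and passes within a uniform distance of $y$ at time $\approx d(x,y)$ — the value $22\delta$ being exactly the bound coming from this construction. An $r_0$-separated family in $\overline{B}(x,T)\cap\textup{QC-Hull}(C)$ then yields, via these geodesics, a $d_{\textup{Geod}}^{T'}$-separated family in $\textup{Geod}_\sigma(\overline{B}(x,22\delta),C)$, whence $h_{\textup{Cov}}(C)\le h_{\textup{Lip-top}}(\textup{Geod}_\sigma(C))$. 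Combining the two bounds proves the equality of part (i), and tracking the constants produced by hyperbolicity and packing gives the equality of asymptotic behaviours with thresholds depending only on $P_0,r_0,\delta$. \textbf{The main obstacle} is precisely this discretization-plus-visibility passage: controlling the weighted-integral metric by a finite-window supremum and pinning the uniform constant $22\delta$ in the visibility step, both with thresholds independent of $C$.

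For part (ii) I argue formally, using that part (i) exhibits $d_{\textup{Geod}}$ as realizing the infimum over geometric metrics for every closed subset. The inequality $\sup_{C'}\le$ is monotonicity: for closed $C'\subseteq C$ every geometric metric of $\textup{Geod}_\sigma(C)$ restricts to one of $\textup{Geod}_\sigma(C')$ and every compact subset of the latter is one of the former, so $h_{\textup{Lip-top}}(\textup{Geod}_\sigma(C'))\le h_{\textup{Lip-top}}(\textup{Geod}_\sigma(C))$. For the reverse, I evaluate the defining infimum on the single metric $d_{\textup{Geod}}$: any compact $K\subseteq\textup{Geod}_\sigma(C)$ has, through the continuous endpoint map into $\partial X\times\partial X$, a compact — hence closed — set of endpoints $C_K\subseteq C$, so $K\subseteq\textup{Geod}_\sigma(C_K)$ and the $d_{\textup{Geod}}$-growth of $K$ is at most that of $\textup{Geod}_\sigma(C_K)$, which by part (i) equals $h_{\textup{Lip-top}}(\textup{Geod}_\sigma(C_K))\le S:=\sup_{C'}h_{\textup{Lip-top}}(\textup{Geod}_\sigma(C'))$. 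Taking the supremum over $K$ gives $h_{\textup{Lip-top}}(\textup{Geod}_\sigma(C))\le S$, hence the claimed equality. Finally, applying part (i) to each closed $C'$ together with $\textup{QC-Hull}(C')\subseteq\textup{QC-Hull}(C)$ yields $S=\sup_{C'}h_{\textup{Cov}}(C')\le h_{\textup{Cov}}(C)$, which is the stated bound.
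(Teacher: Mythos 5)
Your proposal follows essentially the same route as the paper: the paper proves a single statement, $\overline{h_{\textup{Lip-top}}}(\textup{Geod}_\sigma(C))=\sup_{C'\subseteq C\ \textup{closed}}\overline{h_{\textup{Cov}}}(C')$ (Theorem \ref{lipschitz-entropy}), by reducing to the metric $d_{\textup{Geod}}$ and the fixed compact set $\textup{Geod}_\sigma(\overline{B}(x,L),C')$ via the Key Lemma \ref{residualentropy} and Proposition \ref{closed-entropyrelations}, comparing with $\textup{Cov}(S(x,T)\cap\textup{QC-Hull}(C'),r_0)$ through Lemmas \ref{approximation-line-ray} and \ref{approximation-ray-line}, and handling non-closed $C$ exactly by your endpoint-map/compactness argument; your decision to prove (i) first and deduce (ii) is only an organizational difference. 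One step is stated in a way that would fail literally: an $r_0$-separated family of points $y\in\overline{B}(x,T)\cap\textup{QC-Hull}(C)$ does \emph{not} yield a $d_{\textup{Geod}}^{T'}$-separated family of geodesics, since the constructed geodesics only pass within a uniform distance $D$ (of order $\delta$) of the points, so points at distance between $r_0$ and $2D$ may produce arbitrarily close geodesics. This is repairable — either coarsen the separation scale to $2D+r_0$ at the cost of a bounded multiplicative factor (Proposition \ref{cov-ent-closed} and the uniform packing), or, as the paper does, run the argument in the dual covering direction (a $\texthtd^T$-covering of $\textup{Geod}_\sigma(\overline{B}(x,L),C')$ projects via the Lipschitz evaluation map to a coarse covering of $S(x,T)\cap\textup{QC-Hull}(C')$) — but as written the separated-family claim is the one genuine imprecision in an otherwise faithful reconstruction.
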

The inequality in (ii) can be strict, as shown in \cite{Cav21bis}.

\subsection{Differences of the invariants for geometrically finite groups}
\label{subsec-intro}
In this last part of the introduction we restrict the attention to the case of Riemannian manifolds $\bar{M}=M/\Gamma$ with pinched negative sectional curvature. If $\Gamma$ is geometrically finite then the limit set of $\Gamma$ is the union of the radial limit set $\Lambda_\text{r}(\Gamma)$ and the bounded parabolic points. The latter is a countable set, therefore the Hausdorff dimension of the limit set coincides with the Hausdorff dimension of the radial limit set. So by Bishop-Jones' Theorem it holds (here HD denotes the Hausdorff dimension):
\begin{equation}
	\label{HD-geom-finit}
	\text{HD}(\Lambda(\Gamma)) = \text{HD}(\Lambda_r(\Gamma)) = h_\Gamma.
\end{equation}
We remark that this is not true if $\Gamma$ is not geometrically finite, even when $M$ is the hyperbolic space.
\begin{ex}
	In general it can happen 
	$\text{HD}(\Lambda_\text{r}(\Gamma))) <\text{HD}(\Lambda(\Gamma))$. Indeed let $\Gamma$ be a cocompact group of $\mathbb{H}^2$ and let $\Gamma'$ be a normal subgroup of $\Gamma$ such that $\Gamma/\Gamma'$ is not amenable. Let $F\subseteq \Lambda(\Gamma')$ be the subsets of points $z$ that are fixed by some $g \in \Gamma'$. For every $z\in F$ and every $h\in \Gamma$ we have $hz = hgz = g'hz$ for some $g'\in \Gamma'$ since $\Gamma'$ is normal. Then $hz$ is fixed by $g'$ and so it belongs to $F$, i.e. $F$ is $\Gamma$-invariant. By minimality of $\Lambda(\Gamma)$ we get $\Lambda(\Gamma') = \Lambda(\Gamma)$, so $\text{HD}(\Lambda(\Gamma')) = \text{HD}(\Lambda(\Gamma))$. But by the growth tightness of $\Gamma$ (cp. for instance \cite{Sam02}) we have \vspace{-1mm}
	$$\text{HD}(\Lambda_{\text{r}}(\Gamma')) = h_{\Gamma'} < h_\Gamma = \text{HD}(\Lambda(\Gamma)) = \text{HD}(\Lambda(\Gamma')).$$
\end{ex}
\noindent However if $M$ is the hyperbolic space and $\Gamma$ is geometrically finite then even something more is true, indeed by \cite{SU96}:
\begin{equation}
	\label{HD-MD-hyp}
	h_\Gamma=\text{HD}(\Lambda(\Gamma)) = \text{MD}(\Lambda(\Gamma)).
\end{equation}
This equality fails to be true for geometrically finite (actually of finite covolume) groups of manifolds with pinched, but variable, negative curvature. Indeed we have:
\begin{ex}
	\label{DPPS-example}
	In \cite{DPPS09} it is presented an example of a smooth Riemannian manifold $M$ with pinched negative sectional curvature admitting a (non-uniform) lattice (i.e. a group of isometries $\Gamma$ with $\text{Vol}(M/\Gamma) < + \infty)$ such that $h_\Gamma < h_{\mu_M}(M)$ (recall that $\mu_M$ denotes the Riemannian volume of $M$). We observe that since $\Gamma$ is a lattice then $\Lambda(\Gamma)=\partial M$, so $h_{\mu_M}(M) = \text{MD}(\Lambda(\Gamma))$ by Theorem \ref{B}, while $h_\Gamma=\text{HD}(\Lambda_\text{r}(\Gamma)) = \text{HD}(\Lambda(\Gamma))$ by \eqref{HD-geom-finit}.
\end{ex}
\noindent The example above is due to a relevant variation of the curvature of $M$. Indeed in \cite{DPPS19} is shown that for non-uniform lattices $\Gamma$ of asymptotically $1/4$-pinched manifolds with negative curvature $M$ it holds $h_{\mu_M}(M)=h_\Gamma$.
\noindent The general situation in the geometrically finite case is:
\begin{equation}
	\label{aaaaaaa}
	\begin{aligned}
		&\text{HD}(\Lambda(\Gamma)) = h_\Gamma = h_\text{top}(M/\Gamma) \\  h_\text{Lip-top}(\text{Geod}(\Lambda(\Gamma&))) = h_\text{Cov}(\Lambda(\Gamma)) = h_{\mu_M}(\Lambda(\Gamma)) = \text{MD}(\Lambda(\Gamma)),
	\end{aligned}
\end{equation}
where the equalities in the second line follow by Theorem \ref{E} and Theorem \ref{G}, while the equalities in the first line are consequences of \eqref{HD-geom-finit} and Otal-Peigne's variational principle \cite{OP04}.
Moreover it is clear that the first line is always less than or equal to the second one, since the Hausdorff dimension is always smaller than or equal to the Minkowski dimension. 
%We remark that in case $M$ is of constant curvature $-1$ then the covering entropy of the limit set of a group $\Gamma$ coincides with the convex-core entropy of $\Gamma$ and in this setting some of the equalities above are known (see \cite{FM15}).
\vspace{2mm}

\noindent The relations in \eqref{aaaaaaa} allow us to give new interpretations of the phenomena occurring in Example \ref{DPPS-example}, i.e. the possible difference between the critical exponent of the group and the volume entropy of $\Lambda(\Gamma)$:
\begin{itemize}
	\item \emph{measure-theoretic interpretation:} it can be seen as the difference between the Hausdorff and the Minkowski dimension of the limit set $\Lambda(\Gamma)$, so it is related to the fractal structure of the limit set;
	\item \emph{dynamical interpretation:} it can be seen as the difference between the topological entropy of the geodesic flow of the quotient and the Lipschitz-topological entropy of Geod$(\Lambda(\Gamma))$. 
	\item \emph{combinatoric interpretation:} it can be seen as the difference between $h_\Gamma$ and $h_{\text{Cov}}(\Lambda(\Gamma))$, where the former counts the exponential growth rate of an orbit while the latter counts the exponential growth rate of the cardinality of $r$-nets, for some (any) $r>0$. Here the difference arises in terms of sparsity of the orbit.
\end{itemize}

\section{GCB-spaces and space of geodesics}
Throughout the paper $X$ will denote a metric space with metric $d$. The open (resp.closed) ball of radius $r$ and center $x$ is denoted by $B(x,r)$ (resp. $\overline{B}(x,r)$), while the metric sphere of center $x$ and radius $R$ is denoted by $S(x,R)$. We use the notation $A(x,r,r')$ to denote the closed annulus of center $x$ and radii $0<r<r'$, i.e. the set of points $y\in X$ such that $r\leq d(x,y) \leq r'$.
A geodesic segment is an isometry $\gamma\colon I \to X$ where $I=[a,b]$ is a a bounded interval of $\mathbb{R}$. The points $\gamma(a), \gamma(b)$ are called the endpoints of $\gamma$. A metric space $X$ is said geodesic if for all couple of points $x,y\in X$ there exists a geodesic segment whose endpoints are $x$ and $y$. We will denote any geodesic segment between two points $x$ and $y$, with an abuse of notation, as $[x,y]$. A geodesic ray is an isometry $\gamma\colon[0,+\infty)\to X$ while a geodesic line is an isometry $\gamma\colon \mathbb{R}\to X$. 
\vspace{2mm}

\noindent Let $Y$ be any subset of a metric space $X$:\\
-- a subset $S$ of $Y$ is called  {\em $r$-dense}   if   $\forall y \in Y$  $\exists z\in S$ such that $d(y,z)\leq r$; \\
-- a subset $S$ of $Y$ is called  {\em $r$-separated} if   $\forall y,z \in S$ it holds $d(y,z)> r$.\\
The packing number of $Y$ at scale $r$ is the maximal cardinality of a $2r$-separated subset of $Y$ and it is denoted by $\text{Pack}(Y,r)$. The covering number of $Y$ is the minimal cardinality of a $r$-dense subset of $Y$ and it is denoted by $\text{Cov}(Y,r)$. The following inequalities are classical:
\begin{equation}
	\label{packing-covering}
	\text{Pack}(Y,2r) \leq \text{Cov}(Y,2r) \leq \text{Pack}(Y,r).
\end{equation}
The packing and the covering functions of $X$ are respectively
$$\text{Pack}(R,r)=\sup_{x\in X}\text{Pack}(\overline{B}(x,R),r), \qquad \text{Cov}(R,r)=\sup_{x\in X}\text{Cov}(\overline{B}(x,R),r).$$
They take values on $[0,+\infty]$.
By \eqref{packing-covering} it holds
\begin{equation}
	\label{packing-covering-2}
	\text{Pack}(R,2r)\leq \text{Cov}(R,2r)\leq \text{Pack}(R,r).
\end{equation}
Let $C_0,P_0,r_0>0$. We say that a metric space $X$ is {\em $P_0$-packed at scale $r_0$} if Pack$(3r_0,r_0)\leq P_0$, that is every ball of radius $3r_0$ contains no more than $P_0$ points that are $2r_0$-separated. The space $X$ is {\em $C_0$-covered at scale $r_0$} if Cov$(3r_0,r_0)\leq C_0$, that is every ball of radius $3 r_0$ can be covered by at most $C_0$ balls of radius $r_0$. 
\vspace{2mm}

\noindent A geodesic bicombing on a metric space $X$ is a map $\sigma \colon X\times X\times [0,1] \to X$ with the property that for all $(x,y) \in X\times X$ the map $\sigma_{xy}\colon t\mapsto \sigma(x,y,t)$ is a geodesic from $x$ to $y$ parametrized proportionally to arc-length, i.e. $d(\sigma_{xy}(t), \sigma_{xy}(t')) = \vert t - t' \vert d(x,y)$ for all $t,t'\in [0,1]$, $\sigma_{xy}(0)=x, \sigma_{xy}(1)=y$. \\
\emph{When $X$ is equipped with a geodesic bicombing then for all $x,y\in X$ we will denote by $[x,y]$ the geodesic $\sigma_{xy}$ parametrized by arc-length.}\\
A geodesic bicombing is:
\begin{itemize}
	\item \emph{convex} if the map $t\mapsto d(\sigma_{xy}(t), \sigma_{x'y'}(t))$ is convex on $[0,1]$ for all $x,y,x',y' \in X$;
	\item \emph{consistent} if for all $x,y \in X$, for all $0\leq s\leq t \leq 1$ and for all $\lambda\in [0,1]$ it holds $\sigma_{pq}(\lambda) = \sigma_{xy}((1-\lambda)s + \lambda t)$, where $p:= \sigma_{xy}(s)$ and $q:=\sigma_{xy}(t)$;
	\item \emph{reversible} if $\sigma_{xy}(t) = \sigma_{yx}(1-t)$ for all $t\in [0,1]$.
\end{itemize}
For instance every convex metric space in the sense of Busemann (so also any CAT$(0)$ metric space) admits a unique convex, consistent, reversible geodesic bicombing.\\
Given a geodesic bicombing $\sigma$ we say that a geodesic (segment, ray, line) $\gamma$ is a $\sigma$-geodesic (segment, ray, line) if for all $x,y\in \gamma$ we have that $[x,y]$ coincides with the subsegment of $\gamma$ between $x$ and $y$.\\
A geodesic bicombing is \emph{geodesically complete} if every $\sigma$-geodesic segment is contained in a $\sigma$-geodesic line. 
A couple $(X,\sigma)$ is said a GCB-space if $\sigma$ is a convex, consistent, reversible, geodesically complete geodesic bicombing on the complete metric space $X$.
The packing condition has a controlled behaviour in GCB-spaces.
\begin{prop}[Proposition 3.2 of \cite{CavS20bis}]
	\label{packingsmallscales}
	Let $(X,\sigma)$ be a \textup{GCB}-space that is $P_0$-packed at scale $r_0$. Then:
	\begin{itemize}
		\item[(i)] for all $r\leq r_0$, the space $X$ is $P_0$-packed at scale $r$ and is proper;
		\item[(ii)]  for every $0<r\leq R$ and  every $x\in X$ it holds:
		\begin{equation*}
			\begin{aligned}
				\textup{Pack}(R,r)&\leq P_0(1+P_0)^{\frac{R}{r} - 1} \text{, if } r\leq r_0;\\
				\textup{Pack}(R,r)&\leq P_0(1+P_0)^{\frac{R}{r_0} - 1} \text{, if } r > r_0;\\
				\textup{Cov}(R,r)&\leq P_0(1+P_0)^{\frac{2R}{r} - 1} \text{, if } r\leq 2r_0;\\
				\textup{Cov}(R,r)&\leq P_0(1+P_0)^{\frac{R}{r_0} - 1} \text{, if } r > 2r_0.
			\end{aligned}
		\end{equation*}
	\end{itemize}  
\end{prop}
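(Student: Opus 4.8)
The plan is to isolate a single core estimate---the packing bound at scales $r\le r_0$---and to deduce everything else from it by elementary manipulations of the inequalities \eqref{packing-covering-2}. The only genuinely geometric ingredient will be a homothety argument, which is where geodesic completeness, consistency and convexity are all used.

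First I would prove (i) by a centred homothety. Fix $x$, take $r\le r_0$ and a $2r$-separated subset $\{y_1,\dots,y_N\}$ of $\overline B(x,3r)$, and set $\lambda=r_0/r\ge 1$. Using geodesic completeness I extend each $\sigma$-geodesic $[x,y_i]$ to a $\sigma$-geodesic line (choosing one extension per point) and let $y_i'$ be the point at arc-length $\lambda\,d(x,y_i)$ from $x$ on it. By the definition of $\sigma$-geodesic together with consistency, $[x,y_i']$ is exactly this extension, so $y_i=\sigma_{xy_i'}(1/\lambda)$. The function $t\mapsto d(\sigma_{xy_i'}(t),\sigma_{xy_j'}(t))$ is convex and vanishes at $t=0$, hence $t\mapsto \frac1t\,d(\sigma_{xy_i'}(t),\sigma_{xy_j'}(t))$ is non-decreasing; comparing $t=1/\lambda$ with $t=1$ gives $d(y_i',y_j')\ge \lambda\,d(y_i,y_j)>2\lambda r=2r_0$. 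Since also $d(x,y_i')=\lambda\,d(x,y_i)\le 3r_0$, the $y_i'$ form a $2r_0$-separated subset of $\overline B(x,3r_0)$, so $N\le P_0$. Thus $\textup{Pack}(3r,r)\le P_0$, i.e. $X$ is $P_0$-packed at scale $r$ for every $r\le r_0$.

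Next comes the main estimate: for every $r\le r_0$ and $R\ge r$ I claim $\textup{Pack}(\overline B(x,R),r)\le P_0\cdot\textup{Pack}(\overline B(x,R-r),r)$. To prove it, let $\{c_1,\dots,c_m\}$ be a maximal $2r$-separated subset of $\overline B(x,R-r)$, so $m\le\textup{Pack}(R-r,r)$ and the $c_k$ are $2r$-dense there. Given $y\in\overline B(x,R)$, either $y\in\overline B(x,R-r)$, or I push $y$ inward along $[x,y]$ to the point at distance $R-r$ from $x$, moving it by at most $r$; in both cases $y$ lies within $3r$ of some $c_k$. Hence any $2r$-separated $Y\subseteq\overline B(x,R)$ satisfies $Y\subseteq\bigcup_k\overline B(c_k,3r)$, and since $X$ is $P_0$-packed at scale $r$ each ball $\overline B(c_k,3r)$ meets $Y$ in at most $P_0$ points, so $|Y|\le P_0 m$. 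Iterating, with the trivial base case $\textup{Pack}(\overline B(x,R'),r)=1$ for $R'\le r$, gives a bound of the form $P_0^{\lceil R/r\rceil-1}$, which is at most $P_0(1+P_0)^{R/r-1}$; this is the first inequality of (ii). Properness then follows at once: finiteness of all packing numbers makes every closed ball totally bounded, and completeness upgrades this to compactness.

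The other three inequalities of (ii) are formal. For packing with $r>r_0$, a $2r$-separated set is a fortiori $2r_0$-separated, so $\textup{Pack}(R,r)\le\textup{Pack}(R,r_0)\le P_0(1+P_0)^{R/r_0-1}$. For covering with $r\le 2r_0$, the right inequality of \eqref{packing-covering-2} gives $\textup{Cov}(R,r)\le\textup{Pack}(R,r/2)$, and as $r/2\le r_0$ the packing bound yields $P_0(1+P_0)^{2R/r-1}$. For covering with $r>2r_0$, monotonicity of the covering number in the radius together with \eqref{packing-covering-2} gives $\textup{Cov}(R,r)\le\textup{Cov}(R,2r_0)\le\textup{Pack}(R,r_0)\le P_0(1+P_0)^{R/r_0-1}$. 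I expect the main obstacle to be the homothety step of (i): one must check that the chosen geodesic extensions are legitimate (existence via geodesic completeness, and the identification $[x,y_i']=$ extension via consistency) and, above all, that the map genuinely expands distances by the factor $\lambda$. The growth recursion, by contrast, is purely metric, using only that geodesics issuing from $x$ exist and the scale-$r$ packing bound established in (i).
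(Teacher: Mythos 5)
Your proof is correct. Note that the paper itself gives no argument for this proposition — it is imported verbatim as Proposition 3.2 of \cite{CavS20bis} — but your two ingredients (the centred homothety, using geodesic completeness plus the monotonicity of $t\mapsto \frac1t d(\sigma_{xy'}(t),\sigma_{xy''}(t))$ for convex functions vanishing at $0$, to propagate the packing condition to all scales $r\le r_0$; then the annulus recursion $\textup{Pack}(R,r)\le P_0\,\textup{Pack}(R-r,r)$ and the formal reductions via \eqref{packing-covering-2}) are exactly the mechanism of the cited proof. Your recursion in fact yields the slightly sharper intermediate bound $P_0^{\lceil R/r\rceil-1}$, which, as you check, implies the stated $P_0(1+P_0)^{R/r-1}$ since $P_0\ge 1$.
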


%\begin{proof}
%	By \eqref{packing-covering-2} we have that $X$ is $C_0$-packed at scale $r_0$. Hence we can apply the previous proposition to get
%	$$\text{Cov}(R,r) \leq \text{Pack}\bigg(R,\frac{r}{2}\bigg) \leq C_0(1+C_0)^{\frac{2R}{r} - 1}$$
%	if $\frac{r}{2} \leq r_0$ and
%	$$\text{Cov}(R,r) \leq C_0(1+C_0)^{\frac{2R}{r_0} - 1}$$
%	if $\frac{r}{2} > r_0$.
%\end{proof}

\noindent Basic examples of GCB-spaces that are $P_0$-packed at scale $r_0$ are:
\begin{itemize}
	\item[i)] complete and simply connected Riemannian manifolds with sectional curvature pinched between two nonpositive constants $\kappa' \leq  \kappa < 0$;
	\item[ii)] simply connected $M^\kappa$-complexes, with $\kappa \leq 0$, without free faces and {\em bounded geometry} (i.e., with valency at most $V_0$, size at most $S_0$ and positive injectivity radius);
	\item[iii)] complete, geodesically complete, CAT$(0)$ metric spaces $X$ 
	with dimension at most $n$ and volume of balls of radius $R_0$ bounded above by $V$.
\end{itemize} 
\noindent For further details on the second and the third class of examples we refer to \cite{CavS20}.
\vspace{2mm}

\noindent When $(X,\sigma)$ is a proper GCB-space we can consider the {\em space of parametrized geodesic lines} of $X$,
$$\text{Geod}(X) = \lbrace \gamma\colon \mathbb{R} \to X \text{ isometry}\rbrace,$$
endowed with the topology of uniform convergence on compact subsets of $\mathbb{R}$, and its subset $\text{Geod}_\sigma(X)$ made of elements whose image is a $\sigma$-geodesic line. By the continuity of $\sigma$ (due to its convexity, see \cite{Des15}, \cite{CavS20bis}) we have that $\text{Geod}_\sigma(X)$ is closed in $\text{Geod}(X)$.
There is a natural action of $\mathbb{R}$ on $\text{Geod}(X)$ defined by reparametrization:
$$\Phi_t\gamma (\cdot) = \gamma(\cdot + t)$$
for every $t\in \mathbb{R}$.
It is easy to see it is a continuous action, i.e. $\Phi_t \circ \Phi_s = \Phi_{t+s}$ for all $t,s\in \mathbb{R}$ and for every $t\in \mathbb{R}$ the map $\Phi_t$ is a homeomorphism of $\text{Geod}(X)$. Moreover the action restricts as an action on $\text{Geod}_\sigma(X)$.
This action on $\text{Geod}_\sigma(X)$ is called the {\em $\sigma$-geodesic flow} on $X$. 
The {\em evaluation map} $E\colon \text{Geod}(X) \to X$, which is defined as $E(\gamma)=\gamma(0)$, is continuous and proper (\cite{BL12}, Lemma 1.10), so its restriction to $\text{Geod}_\sigma(X)$ has the same properties. Moreover this restriction is surjective since $\sigma$ is assumed geodesically complete. The topology on $\text{Geod}_\sigma(X)$ is metrizable. Indeed we can construct a family of metrics on $\text{Geod}_\sigma(X)$ with the following method. \\
\noindent Let $\mathcal{F}$ be the class of continuous functions $f\colon \mathbb{R} \to \mathbb{R}$ satisfying
\begin{itemize}
	\item[(a)] $f(s) > 0$ for all $s \in \mathbb{R}$;
	\item[(b)] $f(s) = f(-s)$ for all $s \in \mathbb{R}$;
	\item[(c)] $\int_{-\infty}^{+\infty} f(s)ds = 1$;
	\item[(d)] $\int_{-\infty}^{+\infty} 2\vert s \vert f(s) ds = C(f) < + \infty$.
\end{itemize}
For every $f\in \mathcal{F}$ we define the distance on $\text{Geod}_\sigma(X)$:
\begin{equation}
	\label{fdistance}
	f(\gamma, \gamma') = \int_{-\infty}^{+\infty}d(\gamma(s),\gamma'(s))f(s)ds.
\end{equation}
We remark that the choice of $f=\frac{1}{2e^{\vert s \vert}}$ gives exactly the distance $d_{\text{Geod}}$. We are motivated to study the whole class $\mathcal{F}$ because of the applications in further works as \cite{Cav21bis}.
\begin{lemma}
	\label{PropGeod}
	The expression defined in \eqref{fdistance} satisfies these properties:
	\begin{itemize}
		\item[(i)] it is a well defined distance on $\textup{Geod}_\sigma(X)$;
		\item[(ii)] for all $\gamma,\gamma' \in \textup{Geod}_\sigma(X)$ it holds $f(\gamma,\gamma')\leq d(\gamma(0), \gamma(0)) + C(f)$;
		\item[(iii)] for all $\gamma,\gamma' \in \textup{Geod}_\sigma(X)$ it holds $d(\gamma(0),\gamma'(0)) \leq f(\gamma,\gamma')$;
		\item[(iv)] it induces the topology of \textup{Geod}$_\sigma(X)$.
	\end{itemize}
\end{lemma}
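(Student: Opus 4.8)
The plan is to dispatch (i) and (ii) simultaneously from a single pointwise bound, to isolate (iii) as the one genuinely geometric statement, and to treat (iv) by comparing the two notions of convergence directly.

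For (ii) and the finiteness needed in (i): since every element of $\textup{Geod}_\sigma(X)$ is parametrized by arc length, the triangle inequality gives
\[
d(\gamma(s),\gamma'(s)) \le d(\gamma(s),\gamma(0)) + d(\gamma(0),\gamma'(0)) + d(\gamma'(0),\gamma'(s)) = d(\gamma(0),\gamma'(0)) + 2|s|.
\]
Multiplying by $f(s)\ge 0$ and integrating, properties (c) and (d) give $f(\gamma,\gamma') \le d(\gamma(0),\gamma'(0)) + C(f) < +\infty$; this is exactly (ii) and also shows that the integral converges, so the expression in \eqref{fdistance} is well defined. The remaining distance axioms in (i) are then routine: symmetry is clear, the triangle inequality follows by integrating the pointwise inequality $d(\gamma(s),\gamma''(s)) \le d(\gamma(s),\gamma'(s)) + d(\gamma'(s),\gamma''(s))$ against the nonnegative weight $f$, and for nondegeneracy I would use that $f(s)>0$ everywhere by (a), together with continuity of $s\mapsto d(\gamma(s),\gamma'(s))$, to pass from vanishing of the integral to $\gamma\equiv\gamma'$.

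The crux is (iii). First I would show that $g(s):=d(\gamma(s),\gamma'(s))$ is convex on all of $\mathbb{R}$: for any $a<b$, since $\gamma,\gamma'$ are $\sigma$-geodesics their subsegments over $[a,b]$ coincide, after the affine reparametrization of $[a,b]$ onto $[0,1]$, with $\sigma_{\gamma(a)\gamma(b)}$ and $\sigma_{\gamma'(a)\gamma'(b)}$, so convexity of the bicombing makes $g$ convex on $[a,b]$, hence on $\mathbb{R}$. Now $f(s)\,ds$ is a probability measure by (c) whose barycenter $\int_{-\infty}^{+\infty} sf(s)\,ds$ vanishes, since $sf(s)$ is odd by (b) and integrable by (d). Jensen's inequality applied to the convex $g$ then yields $f(\gamma,\gamma')=\int g(s)f(s)\,ds \ge g(0)=d(\gamma(0),\gamma'(0))$, which is (iii).

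Finally, for (iv) I would show that the metric and the topology of uniform convergence on compact subsets define the same convergent sequences. If $\gamma_n\to\gamma$ uniformly on compacts, I split the integral at $|s|=T$: for $n$ large the tail is bounded by $\int_{|s|>T}(d(\gamma_n(0),\gamma(0))+2|s|)f(s)\,ds$, which is small for $T$ large by (c) and (d), while the central part is at most $\sup_{|s|\le T} d(\gamma_n(s),\gamma(s))\to 0$. Conversely, if $f(\gamma_n,\gamma)\to 0$ but $\sup_{[-T,T]}g_n\not\to 0$ for some $T$ (with $g_n(s):=d(\gamma_n(s),\gamma(s))$), then each $g_n$ is $2$-Lipschitz by arc-length parametrization, so a point $s_k\in[-T,T]$ with $g_{n_k}(s_k)\ge\varepsilon$ forces $g_{n_k}\ge\varepsilon/2$ on an interval of fixed length; since $f$ is continuous and strictly positive it is bounded below on a compact neighbourhood of $[-T,T]$, giving $\int g_{n_k}f$ bounded away from $0$, a contradiction. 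The main obstacle is really the convexity observation behind (iii): it is the only point where the full $\sigma$-structure (consistency plus convexity of the bicombing) enters, and it is what makes the comparison $d(\gamma(0),\gamma'(0))\le f(\gamma,\gamma')$ sharp, i.e. what will make $d_{\textup{Geod}}$ a \emph{geometric} metric; everything else rests only on the arc-length Lipschitz bounds and the integrability conditions (c) and (d).
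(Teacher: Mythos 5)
Your proof is correct and follows essentially the same route as the paper's: the same triangle-inequality bound $d(\gamma(s),\gamma'(s))\le d(\gamma(0),\gamma'(0))+2|s|$ handles (i)--(ii), convexity of $s\mapsto d(\gamma(s),\gamma'(s))$ together with the symmetry of $f$ gives (iii), and (iv) is the same two-directional comparison (tail splitting one way, a Lipschitz-plus-positive-density lower bound on the integral the other way). The only cosmetic difference is in (iii), where you invoke Jensen's inequality at the barycenter $0$ while the paper uses the midpoint inequality $g(0)\le\frac{1}{2}\bigl(g(s)+g(-s)\bigr)$ and folds the integral over $s\mapsto -s$; these are equivalent here, and your explicit verification that $g$ is convex on all of $\mathbb{R}$ via consistency of the bicombing is a detail the paper leaves implicit.
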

\begin{proof}
	For all $\gamma,\gamma'\in \text{Geod}_\sigma(X)$ we have 
	\begin{equation*}
		\begin{aligned}
			d(\gamma(s),\gamma'(s))	&\leq d(\gamma(s),\gamma(0)) + d(\gamma(0),\gamma'(0)) + d(\gamma'(0),\gamma'(s)) \\
			&\leq 2\vert s \vert + d(\gamma(0),\gamma'(0)),
		\end{aligned}
	\end{equation*}
	so
	$$\int_{-\infty}^{+\infty}d(\gamma(s),\gamma'(s))f(s)ds \leq d(\gamma(0),\gamma'(0)) + \int_{-\infty}^{+\infty}2\vert s \vert f(s) dt < +\infty.$$
	This shows (ii) and that the integral in \eqref{fdistance} is finite. From the properties of the integral and the positiveness of $f$ it is easy to prove that \eqref{fdistance} defines a distance.
	The proof of (iii) follows from the convexity of $\sigma$ and the symmetry of $f$. Indeed for all $\gamma, \gamma' \in \text{Geod}_\sigma(X)$ the function $g(s) = d(\gamma(s),\gamma'(s))$ is convex. This means that for all $S,S' \in \mathbb{R}$ and for all $\lambda \in [0,1]$ it holds
	$$g(\lambda S + (1-\lambda) S')\leq \lambda g(S) + (1-\lambda) g(S').$$
	We take $s\geq 0$ and we use the inequality above with $S=s,S'=-s$ and $\lambda = \frac{1}{2}$, obtaining
	$$d(\gamma(0),\gamma'(0))=g(0) \leq \frac{1}{2}g(-s) + \frac{1}{2}g(s) = \frac{d(\gamma(s),\gamma'(s)) + d(\gamma(-s), \gamma'(-s))}{2}.$$
	We can now estimate the distance between $\gamma$ and $\gamma'$ as
	\begin{equation*}
		\begin{aligned}
			f(\gamma, \gamma') & =\int_{-\infty}^0 d(\gamma(s),\gamma'(s))f(s)ds + \int_{0}^{+\infty}d(\gamma(s),\gamma'(s))f(s)ds \\
			%			&= \int_{0}^{+\infty}d(\gamma(-s),\gamma'(-s))f(-s)ds + \int_{0}^{+\infty}d(\gamma(s),\gamma'(s))f(s)ds \\
			%			&= \int_{0}^{+\infty}d(\gamma(-s),\gamma'(-s))f(s)ds + \int_{0}^{+\infty}d(\gamma(s),\gamma'(s))f(s)ds \\
			&= \int_{0}^{+\infty}\big(d(\gamma(-s),\gamma'(-s)) + d(\gamma(s),\gamma'(s))\big)f(s)ds \geq d(\gamma(0),\gamma'(0)),
		\end{aligned}
	\end{equation*}
	where we used the symmetry of $f$.
	This concludes the proof of (iii). \\
	If a sequence $\gamma_n$ converges to $\gamma_\infty$ uniformly on compact subsets then it is clear that for every $T\geq 0$ it holds
	$$\lim_{n\to +\infty}\int_{-T}^{+T}d(\gamma_n(s),\gamma_\infty(s))f(s)ds = 0.$$ 
	For every $\varepsilon > 0$ we pick $T_\varepsilon \geq 0$ such that $\int_{T_\varepsilon}^{+\infty}2\vert s\vert f(s) < \varepsilon$. Then it is easy to conclude, using the properties of $f$, that $$\lim_{n\to +\infty}\int_{-\infty}^{+\infty}d(\gamma_n(s),\gamma_\infty(s))f(s)ds\leq 2\varepsilon.$$ 
	By the arbitrariness of $\varepsilon$ we conclude that the sequence $\gamma_n$ converges to $\gamma_\infty$ with respect to the metric $f$. \\
	Now suppose the sequence $\gamma_n$ converges to $\gamma_\infty$ with respect to $f$ and suppose it does not converge uniformly on compact subsets to $\gamma_\infty$. Therefore there exists $T\geq 0$, $\varepsilon_0>0$ and a subsequence $\gamma_{n_j}$ such that $d(\gamma_{n_j}(t_j), \gamma_\infty(t_j))>6\varepsilon_0$ for every $j$, where $t_j\in [-T,T]$. We can suppose $t_j\to t_\infty$ and so $d(\gamma_{n_j}(t_\infty), \gamma_\infty(t_\infty))>4\varepsilon_0$ for every $j$. For all $t\in [t_\infty - \varepsilon_0, t_\infty + \varepsilon_0]$ we get $d(\gamma_{n_j}(t), \gamma_\infty(t))>2\varepsilon_0$. Therefore, if we set $m = \min_{t\in [t_\infty - \varepsilon_0, t_\infty + \varepsilon_0]}f(s) > 0$, we obtain
	$$\int_{-\infty}^{+\infty}d(\gamma_{n_j}(s), \gamma_\infty(s))f(s)ds > 4\varepsilon_0^2m$$
	for every $j$, which is a contradiction.
\end{proof}
\noindent A metric \texthtd\, on $\text{Geod}_\sigma(X)$ inducing the topology of uniform convergence on compact subsets is said {\em geometric} if the evaluation map $E$ is Lipschitz with respect to this metric. Any metric induced by $f\in \mathcal{F}$ is geometric by Lemma \ref{PropGeod}.(iii).

\section{Covering and volume entropy}
\label{sec-covering}

In this section we will introduce the first two types of entropy: the covering entropy, defined in terms of the covering functions, and the volume entropy of a measure.

\subsection{Covering entropy}
\label{subsec-covering-basis}

Let $(X,\sigma)$ be a GCB-space that is $P_0$-packed at scale $r_0$.
It is natural to define the {\em upper covering entropy} of $X$ as the number
$$\overline{h_{\text{Cov}}}(X) = \limsup_{T\to + \infty}\frac{1}{T}\log \text{Cov}(\overline{B}(x,T),r_0),$$
where $x$ is any point of $X$.
The {\em lower covering entropy} is defined taking the limit inferior instead of the limit superior and it is denoted by $\underline{h_{\text{Cov}}}(X)$. \linebreak
By triangular inequality it is easy to show that the definitions of upper and lower covering entropy do not depend on the point $x\in X$.
In the next proposition, which is essentially Proposition 3.2 of \cite{Cav21ter}, we can see that they do not depend on $r_0$ too and moreover we can replace the covering function with the packing function.
\begin{prop}
	\label{cov-ent-pack}
	Let $(X,\sigma)$ be a \textup{GCB}-space that is $P_0$-packed at scale $r_0$ and let $x\in X$. Then
	$$\frac{1}{T}\log \textup{Cov}(\overline{B}(x,T),r) \underset{P_0,r_0,r,r'}{\asymp} \frac{1}{T}\log \textup{Pack}(\overline{B}(x,T),r')$$ for all $r, r'> 0$.
	In particular any of these functions can be used in the definition of the upper and lower covering entropy.
\end{prop}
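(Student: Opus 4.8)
The plan is to prove the equivalence by reducing everything to a single scale: I will show that for any fixed scale $s>0$, the covering function $T\mapsto \textup{Cov}(\overline{B}(x,T),s)$ agrees with both functions in the statement up to a multiplicative factor that is bounded \emph{uniformly in $T$}, depending only on $P_0,r_0$ and the scales involved. Passing to $\frac{1}{T}\log$ then turns such a multiplicative factor into an additive error of the form $\frac{1}{T}\log(\text{const})$, which tends to $0$ as $T\to+\infty$; this is exactly what is needed to conclude $\asymp$, and it makes the threshold function completely explicit. The point is that none of the scale changes can affect the \emph{exponential} growth rate, because the packing condition forbids the local covering numbers from growing with $T$.

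The key step is the following comparison of covering numbers at two different scales. For $0<s\leq t$ I claim
$$\textup{Cov}(\overline{B}(x,T),t)\ \leq\ \textup{Cov}(\overline{B}(x,T),s)\ \leq\ \textup{Cov}(t,s)\cdot\textup{Cov}(\overline{B}(x,T),t).$$
The left inequality is immediate, since every $s$-dense set is $t$-dense when $t\geq s$. For the right inequality I would first cover $\overline{B}(x,T)$ by $\textup{Cov}(\overline{B}(x,T),t)$ balls of radius $t$, and then cover each such ball $\overline{B}(y,t)$ by at most $\textup{Cov}(\overline{B}(y,t),s)\leq \textup{Cov}(t,s)$ balls of radius $s$; composing the two covers gives the bound. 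The crucial input is that the factor $\textup{Cov}(t,s)=\sup_{y}\textup{Cov}(\overline{B}(y,t),s)$ is \emph{finite and independent of $T$}: this is precisely what Proposition \ref{packingsmallscales} provides, bounding $\textup{Cov}(t,s)$ in terms of $P_0,r_0,s,t$ alone. Taking $\frac{1}{T}\log$ then yields
$$0\ \leq\ \tfrac{1}{T}\log\textup{Cov}(\overline{B}(x,T),s)-\tfrac{1}{T}\log\textup{Cov}(\overline{B}(x,T),t)\ \leq\ \tfrac{1}{T}\log\textup{Cov}(t,s),$$
so that the two scale-$s$ and scale-$t$ functions have the same asymptotic behaviour, with threshold $T_\varepsilon=\tfrac{1}{\varepsilon}\log\textup{Cov}(t,s)$.

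To finish, I would assemble the covering and packing versions using the classical sandwich \eqref{packing-covering}, which gives, applied to $Y=\overline{B}(x,T)$,
$$\textup{Cov}(\overline{B}(x,T),2r')\ \leq\ \textup{Pack}(\overline{B}(x,T),r')\ \leq\ \textup{Cov}(\overline{B}(x,T),r').$$
Since by the previous step all of $\tfrac{1}{T}\log\textup{Cov}(\overline{B}(x,T),r)$, $\tfrac{1}{T}\log\textup{Cov}(\overline{B}(x,T),r')$ and $\tfrac{1}{T}\log\textup{Cov}(\overline{B}(x,T),2r')$ have the same asymptotic behaviour (with thresholds depending only on $P_0,r_0$ and the scales), a squeeze argument for $\asymp$ forces $\tfrac{1}{T}\log\textup{Pack}(\overline{B}(x,T),r')$ to have that behaviour too. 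By transitivity of $\asymp$ we obtain the claimed equivalence, and tracking the constants shows the threshold depends only on $\varepsilon,P_0,r_0,r,r'$. The statement that any of these functions may be used to define the upper and lower covering entropy is then immediate, since $\asymp$ preserves both $\limsup$ and $\liminf$.

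I do not expect any genuine analytic obstacle here: the argument is purely combinatorial once Proposition \ref{packingsmallscales} is in hand. The only point requiring care is the bookkeeping of scales through \eqref{packing-covering} and the verification that every comparison constant, being a value of the covering function $\textup{Cov}(t,s)$ between two \emph{fixed} scales, is uniform in $T$ — it is exactly the uniform control of the packing condition at every scale, guaranteed by the GCB hypothesis, that prevents these constants from depending on $T$ and thereby keeps the additive $\tfrac{1}{T}\log$ errors harmless.
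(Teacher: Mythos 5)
Your proposal is correct and follows essentially the same route as the paper: the scale-comparison inequality $\textup{Cov}(\overline{B}(x,T),s)\leq \textup{Cov}(t,s)\cdot\textup{Cov}(\overline{B}(x,T),t)$ with the uniform bound on $\textup{Cov}(t,s)$ from Proposition \ref{packingsmallscales}, followed by the sandwich \eqref{packing-covering}, is exactly the paper's argument. The minor imprecision about centers of the composed cover possibly lying outside $\overline{B}(x,T)$ is shared with the paper's own proof and is harmless.
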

\begin{proof}
	For all $0<r \leq r'$ and $x\in X$ clearly $\text{Cov}(\overline{B}(x,T), r) \geq \text{Cov}(\overline{B}(x,T),r')$
	and $\text{Cov}(\overline{B}(x,T), r)\leq \text{Cov}(\overline{B}(x,T),r')\cdot \sup_{y\in X}\text{Cov}(\overline{B}(y,r'),r).$
	By Proposition \ref{packingsmallscales} we have $\sup_{y\in X}\text{Cov}(\overline{B}(y,r'),r) = \text{Cov}(r',r)$ which is a finite number depending only on $P_0,r_0, r,r'$. Therefore we obtain 
	$$\frac{1}{T}\log \text{Cov}(\overline{B}(x,T),r) \underset{P_0,r_0,r, r'}{\asymp} \frac{1}{T}\log \text{Cov}(\overline{B}(x,T),r').$$
	Now the thesis follows from \eqref{packing-covering}.
\end{proof}
The upper and lower covering entropies can be computed also using the covering function of the metric spheres.
\begin{prop}
	\label{spherical-entropy}
	Let $(X,\sigma)$ be a \textup{GCB}-space that is $P_0$-packed at scale $r_0$ and $x\in X$. Then for all $r > 0$
	$$\frac{1}{T}\log \textup{Cov}(\overline{B}(x,T),r)\underset{P_0,r_0,r}{\asymp}\frac{1}{T}\log \textup{Cov}(S(x,T),r)$$
\end{prop}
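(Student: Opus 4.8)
The plan is to establish a two-sided comparison between the two covering numbers up to a subexponential factor, which disappears after dividing by $T$ and letting $T\to+\infty$. One inequality is immediate: since $S(x,T)\subseteq \overline{B}(x,T)$ we have $\text{Cov}(S(x,T),r)\leq \text{Cov}(\overline{B}(x,T),r)$, and hence $\frac{1}{T}\log\text{Cov}(S(x,T),r)\leq \frac{1}{T}\log\text{Cov}(\overline{B}(x,T),r)$ for every $T$. The work lies in the reverse direction.

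For the reverse inequality I would use geodesic completeness and convexity of $\sigma$ to project an optimal covering of the top sphere $S(x,T)$ radially inward onto every smaller sphere. Fix an $r$-dense subset $\{c_1,\dots,c_M\}$ of $S(x,T)$ with $M=\text{Cov}(S(x,T),r)$, and for $0\leq\rho\leq T$ set $c_i(\rho)=\sigma_{xc_i}(\rho/T)\in S(x,\rho)$. Given any $w\in S(x,\rho)$, geodesic completeness lets me extend the $\sigma$-geodesic $[x,w]$ to a $\sigma$-geodesic line and follow it to arc-length $T$ from $x$, producing $\bar w\in S(x,T)$ with $w=\sigma_{x\bar w}(\rho/T)$ by consistency. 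Choosing $c_i$ with $d(\bar w,c_i)\leq r$, convexity of $t\mapsto d(\sigma_{x\bar w}(t),\sigma_{xc_i}(t))$ together with its vanishing at $t=0$ gives $d(w,c_i(\rho))\leq(\rho/T)\,d(\bar w,c_i)\leq r$. Thus $\{c_i(\rho)\}_i$ is $r$-dense in $S(x,\rho)$ for every $\rho\leq T$, i.e. inward radial projection is $(\rho/T)$-Lipschitz and does not increase covering numbers.

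Now I would cover the whole ball by stacking these projected nets over finitely many radii. Put $N=\lceil T/r\rceil$ and consider $\{c_i(jr):1\leq i\leq M,\,0\leq j\leq N\}$. For $y\in\overline{B}(x,T)$ with $\rho=d(x,y)$ let $j=\lfloor\rho/r\rfloor$; the radial point $y'=\sigma_{x\bar y}(jr/T)\in S(x,jr)$ satisfies $d(y,y')=\rho-jr<r$, while the previous step gives $d(y',c_i(jr))\leq r$ for a suitable $i$, whence $d(y,c_i(jr))<2r$. So this set is $2r$-dense in $\overline{B}(x,T)$ and $\text{Cov}(\overline{B}(x,T),2r)\leq (N+1)M=(N+1)\,\text{Cov}(S(x,T),r)$. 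Using the submultiplicativity $\text{Cov}(\overline{B}(x,T),r)\leq \text{Cov}(\overline{B}(x,T),2r)\cdot\text{Cov}(2r,r)$ and the finiteness of $\kappa:=\text{Cov}(2r,r)$ from Proposition \ref{packingsmallscales} (which depends only on $P_0,r_0,r$), I obtain $\text{Cov}(\overline{B}(x,T),r)\leq \kappa(N+1)\,\text{Cov}(S(x,T),r)$. Dividing by $T$ and taking logarithms, the factor $\kappa(N+1)$ contributes a term $\frac{1}{T}\log\bigl(\kappa(\lceil T/r\rceil+1)\bigr)$ tending to $0$ at a rate depending only on $\kappa$ and $r$, hence only on $P_0,r_0,r$; combined with the easy inequality this forces the difference of the two functions to lie in $[0,o(1)]$, which is exactly the claimed relation $\underset{P_0,r_0,r}{\asymp}$.

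The only genuinely geometric ingredient, and the step I expect to be the crux, is this radial projection: geodesic completeness guarantees that every point of $S(x,\rho)$ extends outward to a point of $S(x,T)$, and convexity of the bicombing makes the inward projection $(\rho/T)$-Lipschitz. Everything else is the decomposition of the ball into $O(T)$ spheres and routine bookkeeping with $\asymp$ and scale changes, handled through Propositions \ref{cov-ent-pack} and \ref{packingsmallscales}.
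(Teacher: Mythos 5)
Your proposal is correct and follows essentially the same route as the paper: the easy inclusion for one direction, and for the other a decomposition of the ball into $O(T/r)$ radial layers, each covered by projecting an $r$-net of $S(x,T)$ inward along $\sigma$-geodesics, using geodesic completeness to lift a point of the ball out to $S(x,T)$ and convexity of the bicombing (with both geodesics issuing from $x$) to make the projection distance-nonincreasing. The paper phrases the layers as annuli $A(x,kr,(k+1)r)$ rather than spheres $S(x,jr)$ and absorbs the final scale change via Proposition \ref{cov-ent-pack} instead of an explicit submultiplicativity bound, but these are cosmetic differences.
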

\begin{proof}
	Clearly it holds $\text{Cov}(S(x,T),r) \leq \text{Cov}(\overline{B}(x,T),r).$
	The other estimate is more involved. We divide the ball $\overline{B}(x,T)$ in annulii $A(x,kr, (k+1)r)$ with $k=0, \ldots, \frac{T}{r}-1$.
	We easily obtain
	$$\text{Cov}(\overline{B}(x,T),2r) \leq \sum_{k=0}^{\frac{T}{r}-1}\text{Cov}(A(x,kr,(k+1)r), 2r).$$
	Now we claim that for any $k$ it holds 
	$$\text{Cov}(A(x,kr,(k+1)r), 2r) \leq \text{Cov}(S(x,T),r).$$
	Indeed let $\lbrace y_1,\ldots, y_N \rbrace$ be a set of points realizing $\text{Cov}(S(x,T),r)$. For all $i=1,\ldots,N$ we consider the $\sigma$-geodesic segment $\gamma_i=[x,y_i]$ and we call $x_i$ the point along this geodesic segment at distance $kr$ from $x$. Then $x_i \in A(x,kr,(k+1)r)$ for every $i=1,\ldots, N$. We claim that $\lbrace x_1,\dots,x_N \rbrace$ is a $2r$-dense subset of $A(x,kr,(k+1)r)$. We take any $y \in A(x,kr,(k+1)r)$ and we consider the $\sigma$-geodesic segment $[x,y]$. We extend this geodesic to a $\sigma$-geodesic segment $\gamma=[x,y']$, where $y'$ is at distance $T$ from $x$. Then there exists $i$ such that $d(y', y_i) = d(\gamma(T),\gamma_i(T))\leq r$. By convexity of $\sigma$ we have $d(\gamma(t),\gamma_i(t))\leq r$, where $t=d(x,y)$. Therefore we conclude that $d(y,x_i) \leq d(y,\gamma_i(t)) + d(\gamma_i(t), x_i) \leq 2r.$
	This ends the proof of the claim, so
	$\text{Cov}(\overline{B}(x,T),2r) \leq \frac{T}{r}\text{Cov}(S(x,T),r).$
	The thesis follows from these estimates and Proposition \ref{cov-ent-pack}.
\end{proof}
Combining Proposition \ref{packingsmallscales} and Proposition \ref{cov-ent-pack} we can find an uniform upper bound to the covering entropy, see also Proposition 3.2 of \cite{Cav21ter}.
\begin{lemma}
	\label{entropy-bound}
	Let $(X,\sigma)$ be a \textup{GCB}-space that is $P_0$-packed at scale $r_0$. Then
	$$\overline{h_{\textup{Cov}}}(X) \leq \frac{\log(1+P_0)}{r_0}.$$
\end{lemma}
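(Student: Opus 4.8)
The plan is to reduce the covering entropy to the packing function, for which Proposition \ref{packingsmallscales} supplies an exponential bound with exactly the right exponent. First I would invoke Proposition \ref{cov-ent-pack} with $r = r' = r_0$, which asserts that
$$\frac{1}{T}\log \text{Cov}(\overline{B}(x,T),r_0) \underset{P_0,r_0}{\asymp} \frac{1}{T}\log \text{Pack}(\overline{B}(x,T),r_0).$$
Since two functions with the same asymptotic behaviour share the same $\limsup$ as $T\to+\infty$, this lets me compute $\overline{h_{\text{Cov}}}(X)$ using the packing function in place of the covering function.

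Next I would bound the packing function. By definition of the packing function of $X$ one has $\text{Pack}(\overline{B}(x,T),r_0)\leq \text{Pack}(T,r_0)$, and since the scale $r_0$ satisfies $r_0\leq r_0$ the first inequality of Proposition \ref{packingsmallscales}.(ii) applies, yielding
$$\text{Pack}(\overline{B}(x,T),r_0)\leq P_0(1+P_0)^{\frac{T}{r_0}-1}.$$
Taking logarithms, dividing by $T$, and passing to the $\limsup$ then gives
$$\overline{h_{\text{Cov}}}(X) = \limsup_{T\to+\infty}\frac{1}{T}\log \text{Pack}(\overline{B}(x,T),r_0) \leq \lim_{T\to+\infty}\left(\frac{\log P_0}{T} + \frac{T/r_0 - 1}{T}\log(1+P_0)\right) = \frac{\log(1+P_0)}{r_0},$$
which is the claimed estimate.

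The only point worth flagging is the decision to pass through the packing function rather than bounding the covering function directly. The covering estimates in Proposition \ref{packingsmallscales}.(ii) carry a factor of $2$ in the exponent, inherited from the conversion $\text{Cov}(R,2r)\leq\text{Pack}(R,r)$ recorded in \eqref{packing-covering}, so a direct computation would only produce the weaker bound $\frac{2\log(1+P_0)}{r_0}$. Routing the argument through Proposition \ref{cov-ent-pack}, whose $\asymp$ guarantees that the covering and packing entropies coincide, is precisely what removes this spurious factor and delivers the sharp constant. Everything else is a routine limit computation.
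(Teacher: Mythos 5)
Your proof is correct and follows essentially the same route as the paper: the paper's own argument likewise combines Proposition \ref{cov-ent-pack} (to replace the covering function by the packing function) with the bound $\textup{Pack}(\overline{B}(x,R),r_0)\leq P_0(1+P_0)^{\frac{R}{r_0}-1}$ from Proposition \ref{packingsmallscales} and concludes by a limit computation. Your side remark about why passing through the packing function avoids the factor of $2$ in the exponent is accurate and a nice observation, though not needed for the proof itself.
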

\begin{proof}
	For every $x \in X$ it holds
	$\text{Pack}(\overline{B}(x,R),r_0) \leq P_0(1+P_0)^{\frac{R}{r_0} - 1}.$
	The thesis follows immediately.
\end{proof}

\subsection{Volume entropy of homogeneous measures}
\label{subsec-volume}
Let $(X,\sigma)$ be a \textup{GCB}-space that is $P_0$-packed at scale $r_0$. The {\em upper volume entropy} of a measure $\mu$ on $X$ is defined as
$$\overline{h_\mu}(X) = \limsup_{T\to + \infty}\frac{1}{T}\log \mu(\overline{B}(x,T)),$$
while the {\em lower volume entropy} $\underline{h_\mu}(X)$ is defined taking the limit inferior. These definitions do not depend on the choice of the point $x\in X$.\\
A measure $\mu$ on $X$ is called {\em $H$-homogeneous at scale $r > 0$} if
$$\frac{1}{H} \leq \mu(\overline{B}(x,r)) \leq H$$
for all $x\in X$.
We remark that the condition must hold only at scale $r$.
\begin{prop}
	\label{cov-vol-asym}
	Let $(X,\sigma)$ be a \textup{GCB}-space that is $P_0$-packed at scale $r_0$ and let $\mu$ be a measure on $X$ which is $H$-homogeneous at scale $r$. Then
	$$\frac{1}{T}\log \mu(\overline{B}(x,T))\underset{P_0,r_0,H,r}{\asymp} \frac{1}{T}\log \textup{Cov}(\overline{B}(x,T), r).$$
	In particular the upper (resp. lower) volume entropy of $\mu$ coincides with the upper (resp. lower) covering entropy of $X$.
\end{prop}
\begin{proof}
	For all $x\in X$ it holds $\mu(\overline{B}(x,T)) \leq H \cdot \text{Cov}(\overline{B}(x,T), r)$
	and \linebreak
	$\mu(\overline{B}(x,T)) \geq \frac{1}{H} \cdot  \text{Pack}(\overline{B}(x,T - r), r).$\\
	By Proposition \ref{cov-ent-pack} and since $\frac{T-r}{T}\underset{r}{\asymp} 1$ we have the thesis.
\end{proof}
\begin{obs}
	\label{homogeneous}
	The proof of the proposition shows another fact: if a measure is $H$-homogeneous at scale $r$ then it is $H(r')$-homogeneous at scale $r'$ for all $r'\geq r$ and $H(r')$ depends only on $H, P_0, r_0, r$ and $r'$.
\end{obs}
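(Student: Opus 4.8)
The plan is to extract the two elementary inequalities already used in the proof of Proposition \ref{cov-vol-asym} and apply them with the ball radius fixed at $r'$ instead of letting it tend to infinity. First I would establish the upper bound on $\mu(\overline{B}(x,r'))$. Covering $\overline{B}(x,r')$ by $\text{Cov}(\overline{B}(x,r'),r)$ balls of radius $r$, each of which has measure at most $H$ by hypothesis, gives
$$\mu(\overline{B}(x,r')) \leq H\cdot \text{Cov}(\overline{B}(x,r'),r) \leq H\cdot \text{Cov}(r',r).$$
The crucial point is that $\text{Cov}(r',r)$ is a finite number depending only on $P_0,r_0,r,r'$, which is exactly the content of Proposition \ref{packingsmallscales}.(ii); in particular this bound is uniform in $x\in X$.

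For the lower bound I would simply use monotonicity of $\mu$: since $r'\geq r$ we have $\overline{B}(x,r)\subseteq \overline{B}(x,r')$, whence $\mu(\overline{B}(x,r'))\geq \mu(\overline{B}(x,r))\geq \frac{1}{H}$ for every $x\in X$.

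Finally, setting $H(r') := H\cdot \text{Cov}(r',r)$ (note that $\text{Cov}(r',r)\geq 1$, so $H(r')\geq H$) combines the two estimates into
$$\frac{1}{H(r')}\leq \frac{1}{H}\leq \mu(\overline{B}(x,r'))\leq H(r'),$$
which is precisely $H(r')$-homogeneity at scale $r'$, with $H(r')$ depending only on $H,P_0,r_0,r,r'$. I expect essentially no obstacle here beyond invoking the uniform finiteness of the covering function $\text{Cov}(r',r)$: the entire content is that the packing condition, through Proposition \ref{packingsmallscales}, makes the number of $r$-balls needed to cover any $r'$-ball bounded independently of the center, and the lower bound is automatic by inclusion of balls.
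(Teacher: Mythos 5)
Your proposal is correct and follows essentially the same route the paper intends: the upper bound $\mu(\overline{B}(x,r'))\leq H\cdot\mathrm{Cov}(r',r)$ with uniform finiteness from Proposition \ref{packingsmallscales}, and a trivial lower bound (the paper's proof of Proposition \ref{cov-vol-asym} uses the packing estimate $\mu(\overline{B}(x,T))\geq \frac{1}{H}\mathrm{Pack}(\overline{B}(x,T-r),r)$, which at $T=r'$ reduces to the same $\frac{1}{H}$ you get by inclusion of balls). No gap.
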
 

We provide here an example of homogeneous measure.
If $X$ is a complete, geodesically complete, CAT$(0)$ metric space that is $P_0$-packed at scale $r_0$ then the natural measure on $X$ satisfies
$$c \leq \mu_X(\overline{B}(x,r_0)) \leq C$$
for all $x \in X$, where $c$ and $C$ are constants depending only on $P_0$ and $r_0$ (Theorem 4.9 of \cite{CavS20}). It follows immediately the following result.
\begin{cor}
	Let $X$ be a complete, geodesically complete, \textup{CAT}$(0)$ metric space. If it is $P_0$-packed at scale $r_0$ for some $P_0$ and $r_0$ then $\overline{h_{\textup{Cov}}}(X) = \overline{h_{\mu_X}}(X).$
	The same holds for the lower entropies.
\end{cor}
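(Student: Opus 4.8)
The plan is to recognize that this corollary is an immediate consequence of Proposition \ref{cov-vol-asym}, so the only work is to verify that its hypotheses are satisfied in the present situation. First I would observe that a complete, geodesically complete CAT$(0)$ metric space is in particular a GCB-space: being CAT$(0)$ it is Busemann convex, hence (as recalled earlier) it admits a unique convex, consistent, reversible geodesic bicombing $\sigma$, and for Busemann convex spaces geodesic completeness of $X$ coincides with geodesic completeness of $\sigma$. Thus $(X,\sigma)$ is a GCB-space, and the assumed $P_0$-packing at scale $r_0$ makes all the results of this section applicable.

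Next I would reformulate the quoted volume bound in terms of homogeneous measures. By Theorem 4.9 of \cite{CavS20} there are constants $c,C>0$ depending only on $P_0$ and $r_0$ with $c \leq \mu_X(\overline{B}(x,r_0)) \leq C$ for every $x\in X$. Setting $H = \max\{C,\,1/c\}$ we obtain $\frac{1}{H} \leq \mu_X(\overline{B}(x,r_0)) \leq H$ for all $x$, which is precisely the statement that $\mu_X$ is $H$-homogeneous at scale $r=r_0$. Note that the homogeneity scale is allowed to be any positive real number in the definition, so taking it to coincide with the packing scale $r_0$ is admissible.

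With these two observations in place, Proposition \ref{cov-vol-asym} applies with $r=r_0$ and gives $\frac{1}{T}\log \mu_X(\overline{B}(x,T)) \underset{P_0,r_0,H}{\asymp} \frac{1}{T}\log \textup{Cov}(\overline{B}(x,T),r_0)$, and in particular that the upper (resp. lower) volume entropy of $\mu_X$ equals the upper (resp. lower) covering entropy of $X$. This yields both $\overline{h_{\textup{Cov}}}(X)=\overline{h_{\mu_X}}(X)$ and the analogous identity for the lower entropies, completing the proof. There is no real obstacle here: the two points requiring attention are checking that $(X,\sigma)$ qualifies as a GCB-space and that the homogeneity scale $r_0$ is permissible, both of which are immediate.
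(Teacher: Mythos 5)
Your proof is correct and follows essentially the same route as the paper: the corollary is stated there as an immediate consequence of the bound $c \leq \mu_X(\overline{B}(x,r_0)) \leq C$ from Theorem 4.9 of \cite{CavS20}, which makes $\mu_X$ homogeneous at scale $r_0$, combined with Proposition \ref{cov-vol-asym}. Your additional verification that a complete, geodesically complete CAT$(0)$ space is a GCB-space is a point the paper leaves implicit (having listed such spaces among the basic examples), and it is handled correctly.
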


\section{Lipschitz-topological entropy}
\label{sec-liptop}
Let $(X,\sigma)$ be a \textup{GCB}-space that is $P_0$-packed at scale $r_0$. The space $\text{Geod}_\sigma(X)$ is locally compact but not compact. The topological entropy of  the geodesic flow can be defined (see \cite{Bow73}, \cite{HKR95}) as
$$\overline{h_{\text{top}}}(\text{Geod}_\sigma(X)) = \inf_{\text{\texthtd}} \sup_{K} \lim_{r\to 0} \limsup_{T\to + \infty} \frac{1}{T}\log \text{Cov}_{\text{\texthtd}^T}(K,r),$$
where the infimum is taken among all metrics \texthtd\, inducing the topology of $\text{Geod}_\sigma(X)$, the supremum is taken among all compact subsets of $\text{Geod}_\sigma(X)$ and $\text{Cov}_{\text{\texthtd}^T}(K,r)$ is the covering function of the compact subset $K$ at scale $r$ with respect to the metric \texthtd$^T$ defined by
$$\text{\texthtd}^T(\gamma,\gamma') = \max_{t\in [0,T]} \text{\texthtd}(\Phi_t\gamma, \Phi_t\gamma').$$
By the variational principle this quantity equals the measure-theoretic entropy defined as the supremum of the entropies of the flow-invariant probability measures on $\text{Geod}_\sigma(X)$ (cp. \cite{HKR95}, Lemma 1.5).  
An easy computation shows that the topological entropy is always zero.
\begin{lemma}
	\label{zero-entropy}
	There are no flow-invariant probability measures on $\textup{Geod}_\sigma(X)$. In particular the topological entropy of the geodesic flow is $0$.
\end{lemma}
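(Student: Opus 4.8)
The plan is to prove the measure-theoretic assertion first and then read the entropy statement off the variational principle. The only geometric input needed is that a $\sigma$-geodesic line $\gamma$ is an isometric embedding of $\mathbb{R}$, so $d(\gamma(s),\gamma(t)) = \vert s-t\vert$ and its basepoint escapes to infinity at unit speed. This is flatly incompatible with the recurrence that any invariant probability measure would force via Poincaré's theorem, so I would argue by contradiction.

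Concretely, suppose $\mu$ is a flow-invariant Borel probability measure on $\text{Geod}_\sigma(X)$. Fix a basepoint $x_0\in X$ and set $A_R = E^{-1}(\overline{B}(x_0,R)) = \lbrace \gamma : d(\gamma(0),x_0)\leq R\rbrace$, which is closed (hence Borel) since $E$ is continuous. Because every geodesic line takes a value at $0$, we have $\bigcup_{R>0}A_R = \text{Geod}_\sigma(X)$, so $\mu(A_R)\to 1$ and in particular $\mu(A_R)>0$ for some $R$. The time-one map $\Phi_1$ preserves $\mu$, so Poincaré's recurrence theorem, applied to the single measure-preserving transformation $\Phi_1$ and the positive-measure set $A_R$, yields a $\gamma\in A_R$ together with integers $n_j\to+\infty$ such that $\Phi_{n_j}\gamma\in A_R$. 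Recalling $(\Phi_{n_j}\gamma)(0)=\gamma(n_j)$, we then have $d(\gamma(0),x_0)\leq R$ and $d(\gamma(n_j),x_0)\leq R$; but $\gamma$ is an isometry of $\mathbb{R}$, so $n_j = d(\gamma(0),\gamma(n_j)) \leq d(\gamma(0),x_0) + d(x_0,\gamma(n_j)) \leq 2R$ for every $j$, contradicting $n_j\to+\infty$. Hence no flow-invariant probability measure exists. I emphasise that this step is purely measure-theoretic: Poincaré recurrence is invoked for the map $\Phi_1$ and needs only measurability of $A_R$ and invariance of $\mu$, so no topological properties of $\text{Geod}_\sigma(X)$ enter beyond continuity of $E$.

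For the entropy, the quantity $\overline{h_{\text{top}}}(\text{Geod}_\sigma(X))$ is non-negative by construction, since each $\text{Cov}_{\text{\texthtd}^T}(K,r)\geq 1$ makes every term of the $\inf$–$\sup$ non-negative. By the variational principle of \cite{HKR95} it equals the supremum of the measure-theoretic entropies over the flow-invariant probability measures; as this family is empty by the previous paragraph, the supremum degenerates, and combined with non-negativity this forces $\overline{h_{\text{top}}}(\text{Geod}_\sigma(X))=0$. The main point requiring care here is precisely the bookkeeping of this empty supremum: I would verify in the formulation of \cite{HKR95} (their Lemma 1.5) that the absence of invariant measures returns the value $0$ rather than an indeterminate or $-\infty$ expression, so that it is consistent with the manifest non-negativity of the topological entropy. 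Everything else is immediate.
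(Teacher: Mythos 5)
Your proof is correct and follows essentially the same route as the paper: both arguments hinge on the sets $A_R=\{\gamma: \gamma(0)\in\overline{B}(x_0,R)\}$ and on the fact that $A_R\cap\Phi_T^{-1}(A_R)=\emptyset$ for $T>2R$ because geodesic lines escape at unit speed, and both then quote the variational principle of \cite{HKR95}. The only cosmetic difference is that you reach the contradiction via Poincar\'e recurrence for $\Phi_1$, whereas the paper takes $R$ with $\mu(A_R)>\tfrac12$ and notes directly that $A_R$ and $\Phi_{2R+1}^{-1}(A_R)$ would both have measure exceeding $\tfrac12$ yet are disjoint; your added caution about the empty supremum in the variational principle is a fair point that the paper passes over silently.
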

\begin{proof}
	Suppose there is a flow-invariant probability measure $\mu$ on $\text{Geod}_\sigma(X)$. For $x\in X$ and $R\geq 0$ we define $A_R = \lbrace \gamma \in \text{Geod}_\sigma(X) \text{ s.t. } \gamma(0)\in \overline{B}(x,R) \rbrace.$ 
	Clearly	there exists $R\geq 0$ such that $\mu(A_R)> \frac{1}{2}$. By flow-invariance of $\mu$ we have that the set
	$$\Phi_{2R + 1}^{-1}(A_R) = \lbrace \gamma \in \text{Geod}_\sigma(X) \text{ s.t. } \gamma(2R + 1) \in \overline{B}(x,R)\rbrace$$
	has measure $>\frac{1}{2}$. This implies that $\mu(A_R \cap \Phi_{2R + 1}^{-1}(A_R)) > 0$, but this intersection is empty.
\end{proof}

Looking at the proof of the variational principle given in \cite{HKR95} we can observe that the sequence of metrics on $\text{Geod}_\sigma(X)$ that approach the infimum in the definition of the topological entropy are the restriction to $\text{Geod}_\sigma(X)$ of metrics defined on its one-point compactification. 
These metrics are not the natural ones on $\text{Geod}_\sigma(X)$, since they are not geometric. We propose a more appropriate definition of topological entropy for proper GCB-spaces.\\
We define the {\em upper Lipschitz-topological entropy} of $\text{Geod}_\sigma(X)$ as
$$\overline{h_{\text{Lip-top}}}(\text{Geod}_\sigma(X)) = \inf_{\text{\texthtd}}\sup_K \lim_{r\to 0} \limsup_{T\to + \infty}\frac{1}{T}\log\text{Cov}_{\text{\texthtd}^T}(K,r),$$
where the infimum is now taken only among all geometric metrics on $\text{Geod}_\sigma(X)$. The {\em lower Lipschitz-topological entropy} is defined by taking the limit inferior instead of the limit superior and it is denoted by $\underline{h_{\text{Lip-top}}}(\text{Geod}_\sigma(X))$. The main result of this section is the following.

\begin{theo}
	\label{lip-top_cov}
	Let $(X,\sigma)$ be a \textup{GCB}-space that is $P_0$-packed at scale $r_0$. Then
	$$\overline{h_{\textup{Lip-top}}}(\textup{Geod}_\sigma(X)) = \overline{h_{\textup{Cov}}}(X).$$
	The same holds for the lower entropies.
\end{theo}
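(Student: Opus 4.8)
The plan is to prove the two inequalities $\overline{h_{\textup{Lip-top}}}(\textup{Geod}_\sigma(X)) \leq \overline{h_{\textup{Cov}}}(X)$ and $\overline{h_{\textup{Lip-top}}}(\textup{Geod}_\sigma(X)) \geq \overline{h_{\textup{Cov}}}(X)$ separately, the second being where the restriction to \emph{geometric} metrics is essential (without it the entropy is $0$ by Lemma \ref{zero-entropy}). Throughout I fix a basepoint $x_0 \in X$ and write $f^T(\gamma,\gamma') = \max_{t\in[0,T]} f(\Phi_t\gamma,\Phi_t\gamma')$ for the Bowen metric associated to an $f$-metric.

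For the upper bound it suffices to exhibit a single geometric metric realizing the estimate, and I would use any $f$-metric (e.g. $d_{\textup{Geod}}$), which is geometric by Lemma \ref{PropGeod}.(iii). Given a compact $K$, its image $E(K)$ lies in some ball $\overline{B}(x_0,R)$. The key device is a \emph{two-endpoint} control: for a parameter $A>0$ to be chosen, I would show that $d(\gamma(-A),\gamma'(-A)) \leq 2\delta$ and $d(\gamma(T+A),\gamma'(T+A)) \leq 2\delta$ together imply $f^T(\gamma,\gamma') \leq r$, for $\delta$ and $A$ chosen suitably in terms of $r$. Indeed, convexity of $s \mapsto d(\gamma(s),\gamma'(s))$ forces $d(\gamma(t),\gamma'(t)) \leq 2\delta$ for all $t \in [-A,T+A]$, while the contribution of the tails $|s|>A$ to each $f(\Phi_t\gamma,\Phi_t\gamma')$, $t\in[0,T]$, is controlled using $d(\gamma(s+t),\gamma'(s+t)) \leq d(\gamma(t),\gamma'(t)) + 2|s|$ and is dominated by $\int_{|s|>A} 2|s| f(s)\,ds$, which is small by property (d) of $f \in \mathcal{F}$. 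Choosing $\delta = r/4$ and $A$ so that this tail integral is $<r/2$ gives the claim. Consequently the product of a $\delta$-net of $\overline{B}(x_0,R+A)$ (tracking the positions $\gamma(-A)$) and a $\delta$-net of $\overline{B}(x_0,R+T+A)$ (tracking $\gamma(T+A)$) yields an $f^T$-covering of $K$ at scale $r$, whence $\textup{Cov}_{f^T}(K,r) \leq \textup{Cov}(\overline{B}(x_0,R+A),\delta)\cdot\textup{Cov}(\overline{B}(x_0,R+T+A),\delta)$. The first factor is constant in $T$, so dividing by $T$, letting $T\to\infty$, and using the scale- and radius-shift-invariance of the covering entropy (Proposition \ref{cov-ent-pack}) bounds the rate by $\overline{h_{\textup{Cov}}}(X)$, uniformly in $K$ and $r$; the infimum over geometric metrics is therefore $\leq \overline{h_{\textup{Cov}}}(X)$.

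For the lower bound I would take the single compact set $K = E^{-1}(x_0)$ of $\sigma$-geodesic lines with $\gamma(0)=x_0$, which is compact because $E$ is proper. Let \texthtd\ be any geometric metric with Lipschitz constant $L$ for $E$. Applying the Lipschitz inequality to $\Phi_t\gamma$ gives $d(\gamma(t),\gamma'(t)) = d(E(\Phi_t\gamma),E(\Phi_t\gamma')) \leq L\,\text{\texthtd}(\Phi_t\gamma,\Phi_t\gamma')$, hence $\text{\texthtd}^T(\gamma,\gamma') \geq \tfrac{1}{L}\max_{t\in[0,T]} d(\gamma(t),\gamma'(t)) \geq \tfrac{1}{L} d(\gamma(T),\gamma'(T))$. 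Since geodesic completeness makes $\gamma \mapsto \gamma(T)$ map $K$ onto the sphere $S(x_0,T)$, pushing forward any covering of $K$ by $\text{\texthtd}^T$-balls of radius $r$ yields a covering of $S(x_0,T)$ by sets of diameter $\leq 2Lr$, so $\textup{Cov}(S(x_0,T),2Lr) \leq \textup{Cov}_{\text{\texthtd}^T}(K,r)$. Dividing by $T$ and taking $\limsup$, Proposition \ref{spherical-entropy} identifies the left-hand rate with $\overline{h_{\textup{Cov}}}(X)$ (independently of the scale $2Lr$, by Proposition \ref{cov-ent-pack}); as this holds for every $r>0$ and every geometric \texthtd, the infimum defining $\overline{h_{\textup{Lip-top}}}$ is $\geq \overline{h_{\textup{Cov}}}(X)$.

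The lower entropies are handled by the identical argument, reading $\liminf$ for $\limsup$ in both chains, which are valid termwise in $T$. The step I expect to be most delicate is the two-endpoint control in the upper bound: it must simultaneously convert uniform closeness on the \emph{enlarged} interval $[-A,T+A]$ into $f^T$-closeness while keeping the tail error below $r$ uniformly in $t\in[0,T]$, which is exactly where the symmetry (b) and integrability (d) of $f$, together with convexity of the bicombing, enter. The conceptual heart of the theorem, however, is the lower bound's use of the geometric (Lipschitz) hypothesis, which is precisely what rescues the entropy from being trivially zero.
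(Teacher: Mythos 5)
Your proof is correct, and the lower bound ($\overline{h_{\textup{Lip-top}}}\geq\overline{h_{\textup{Cov}}}$) is essentially identical to the paper's: both evaluate a $\text{\texthtd}^T$-covering of $\textup{Geod}_\sigma(x_0)$ at time $T$, use the Lipschitz property of $E\circ\Phi_T$ and geodesic completeness to obtain a covering of $S(x_0,T)$, and conclude via Proposition \ref{spherical-entropy}. The upper bound, however, takes a genuinely different route. The paper first reduces to the single compact set $\textup{Geod}_\sigma(x)$ and a single scale (Proposition \ref{entropyrelations}), which rests on the Key Lemma \ref{residualentropy} --- a subexponential bound on $\textup{Cov}_{f^T}(\overline{B}_{f^T}(\gamma,r'),r)$ --- and its Corollary \ref{entropyrescale}; only then does it produce a $(r_0+C(f))$-dense family of geodesics from an $r_0$-net of $S(x,T)$. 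Your two-endpoint coding bypasses all of this: by convexity of $s\mapsto d(\gamma(s),\gamma'(s))$ and the tail bound $d(\gamma(t+s),\gamma'(t+s))\leq d(\gamma(t),\gamma'(t))+2|s|$ together with property (d) of $f$, closeness of $\gamma(-A)$ and $\gamma(T+A)$ at scale $r/4$ forces $f^T(\gamma,\gamma')\leq r$ once $A=A(f,r)$ is large enough, so $\textup{Cov}_{f^T}(K,r)$ is bounded at \emph{every} scale $r$ directly by a product of two ball-covering numbers, one of which is independent of $T$. This is shorter and more elementary for the equality itself, and it correctly handles the $\lim_{r\to 0}$ and the $\sup_K$ since the resulting rate $\overline{h_{\textup{Cov}}}(X)$ is uniform in $r$ and $K$ by Proposition \ref{cov-ent-pack}. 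What it does not deliver is the refined information the paper extracts from the Key Lemma, namely that the infimum in $\overline{h_{\textup{Lip-top}}}$ is attained by $d_{\textup{Geod}}$ on the fixed compact set $\textup{Geod}_\sigma(x)$ at the fixed scale $r_0$ (Theorem \ref{A}) and the uniform threshold functions behind Theorem \ref{C}; but none of that is required for the statement of Theorem \ref{lip-top_cov} as posed.
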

\noindent One of the two inequalities is easy. In order to prove the other one we will show that for the distances induced by the functions $f\in \mathcal{F}$ the definition of topological entropy can be heavily simplified.

\subsection{Topological entropy for the distances induced by $f\in \mathcal{F}$}
\label{subsec-top-f}

For a metric $f\in \mathcal{F}$ we denote by $\overline{h_f}$ the upper metric entropy of the $\sigma$-geodesic flow with respect to $f$, that is
$$\overline{h_f}(\text{Geod}_\sigma(X))=\sup_K \lim_{r\to 0} \limsup_{T\to + \infty}\frac{1}{T}\log\text{Cov}_{f^T}(K,r).$$
In the usual way it is defined the lower metric entropy with respect to $f$, $\underline{h_f}(\text{Geod}(X))$. For a subset $Y$ of $X$ we denote by $\text{Geod}_\sigma(Y)$ the set of $\sigma$-geodesic lines of $X$ passing through $Y$ at time $0$.

\begin{prop}
	\label{entropyrelations}
	Let $(X,\sigma)$ be a \textup{GCB}-space that is $P_0$-packed at scale $r_0$ and let $f\in \mathcal{F}$. Then
	\begin{itemize}
		\item[(i)] for all $x,y \in X$ it holds
		$\overline{h_f}(\textup{Geod}_\sigma(x))=\overline{h_f}(\textup{Geod}_\sigma(y));$
		\item[(ii)] for all $x \in X$ and $R\geq 0$ it holds 
		$\overline{h_f}(\textup{Geod}_\sigma(\overline{B}(x,R))) = \overline{h_f}(\textup{Geod}_\sigma(x));$
		\item[(iii)] for all $x\in X$ it holds
		$ \overline{h_f}(\textup{Geod}_\sigma(X)) = \overline{h_f}(\textup{Geod}_\sigma(x)) \leq \overline{h_{\textup{Cov}}}(X)$;
		\item[(iv)] for all $x\in X$ the function
		$r\mapsto \limsup_{T\to +\infty}\frac{1}{T}\log \textup{Cov}_{f^T}(\textup{Geod}_\sigma(x), r)$
		is constant. 
	\end{itemize}
	The same conclusions hold for the lower Lipschitz-topological entropy.
\end{prop}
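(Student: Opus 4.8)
The plan is to reduce every dynamical covering number $\text{Cov}_{f^T}(\text{Geod}_\sigma(Y),r)$, for $Y$ a point or a ball, to ordinary covering numbers of metric spheres and balls of $X$, so that all four items collapse onto the basepoint-independence of the covering entropy together with Propositions \ref{spherical-entropy} and \ref{cov-ent-pack}. The engine is a two-sided comparison between the Bowen distance $f^T$ on $\text{Geod}_\sigma(x)$ (which is compact, being $E^{-1}(x)$ with $E$ proper) and the evaluation of geodesics at finitely many times, exploiting that for two $\sigma$-geodesics the function $u\mapsto d(\gamma(u),\gamma'(u))$ is convex.

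First I would record the easy lower bound. Since $f^T(\gamma,\gamma')\geq f(\Phi_T\gamma,\Phi_T\gamma')\geq d(\gamma(T),\gamma'(T))$ by Lemma \ref{PropGeod}.(iii), any $f^T$-dense family in $\text{Geod}_\sigma(x)$ evaluates at time $T$ to an $r$-dense subset of $S(x,T)$; using geodesic completeness to realize each point of $S(x,T)$ as $\gamma(T)$ for some $\gamma\in\text{Geod}_\sigma(x)$, this gives $\text{Cov}(S(x,T),r)\leq\text{Cov}_{f^T}(\text{Geod}_\sigma(x),r)$ for every $r>0$, hence (via Proposition \ref{spherical-entropy}) the bound $\limsup_T\frac{1}{T}\log\text{Cov}_{f^T}(\text{Geod}_\sigma(x),r)\geq\overline{h_{\text{Cov}}}(X)$.

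The heart is the matching upper bound, which I would prove directly for $\text{Geod}_\sigma(\overline{B}(x,R))$. Writing $f(\Phi_t\gamma,\Phi_t\gamma')=\int_\mathbb{R} d(\gamma(u),\gamma'(u))f(u-t)\,du$ and splitting at $|u-t|=L$, the triangle inequality $d(\gamma(u),\gamma'(u))\leq d(\gamma(t),\gamma'(t))+2|u-t|$ together with property (d) of $\mathcal{F}$ bounds the tail by $d(\gamma(t),\gamma'(t))+\varepsilon_L$, where $\varepsilon_L=\int_{|v|>L}2|v|f(v)\,dv$ is independent of both $T$ and $t$, while the main part is at most $\max_{u\in[t-L,t+L]}d(\gamma(u),\gamma'(u))$. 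Taking the maximum over $t\in[0,T]$ yields $f^T(\gamma,\gamma')\leq 2\max_{u\in[-L,T+L]}d(\gamma(u),\gamma'(u))+\varepsilon_L$, and by convexity this maximum is attained at one of the endpoints $u=-L$ or $u=T+L$. Choosing $L=L(f,r)$ with $\varepsilon_L$ small and covering $\text{Geod}_\sigma(\overline{B}(x,R))$ by prescribing $\gamma(-L)\in\overline{B}(x,R+L)$ and $\gamma(T+L)\in\overline{B}(x,R+T+L)$ up to error $r/4$, I obtain that $\text{Cov}_{f^T}(\text{Geod}_\sigma(\overline{B}(x,R)),r+\varepsilon_L)$ is at most $\text{Cov}(\overline{B}(x,R+L),r/4)\cdot\text{Cov}(\overline{B}(x,R+T+L),r/4)$.

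Dividing by $T$ and letting $T\to+\infty$, the first factor is constant in $T$ and disappears, while Propositions \ref{spherical-entropy} and \ref{cov-ent-pack} identify the limit of the second factor with $\overline{h_{\text{Cov}}}(X)$ (and the scale changes $r\mapsto r/4$, $r\mapsto r+\varepsilon_L$ are harmless by the scale-independence in Proposition \ref{cov-ent-pack}); the same computation works verbatim with $\liminf$ and $\underline{h_{\text{Cov}}}(X)$. This single two-sided estimate gives everything: independence of $r$ is (iv); independence of $R$ is (ii); basepoint-independence of $\overline{h_{\text{Cov}}}(X)$ gives (i); and since $E$ is proper every compact $K\subseteq\text{Geod}_\sigma(X)$ lies inside some $\text{Geod}_\sigma(\overline{B}(x,R))$, so the supremum defining $\overline{h_f}(\text{Geod}_\sigma(X))$ equals $\overline{h_f}(\text{Geod}_\sigma(x))=\overline{h_{\text{Cov}}}(X)$, which in particular yields the inequality in (iii). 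The \emph{main obstacle} is making the upper bound uniform in $T$: a naive split centered at $u=0$ produces a tail of order $T\!\int_{|s|>L}f(s)\,ds$, and the key point is that recentering the integration window at $t$ and using convexity to locate the maximum at the two endpoints eliminates this $T$-dependence, at the cost only of the harmless additive constant $\varepsilon_L$.
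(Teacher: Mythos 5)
Your argument is correct, but it takes a genuinely different route from the paper's. The paper proceeds in stages: it first proves the Key Lemma \ref{residualentropy}, showing that every $f^T$-ball of radius $r'$ in $\textup{Geod}_\sigma(X)$ is covered by a subexponential number of $f^T$-balls of radius $r$ (this needs an $\varepsilon T$ overshoot of the time window and yields the rescaling Corollary \ref{entropyrescale}, i.e.\ item (iv)); it then proves (ii) by a separate time-dilation argument comparing $f^T$ on $\textup{Geod}_\sigma(\overline{B}(x,R))$ with $f^{(2+\varepsilon)T}$ on $\textup{Geod}_\sigma(x)$, and (iii) by extending radial geodesics from an $r_0$-net of $S(x,T)$. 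You instead establish one two-sided sandwich
$\textup{Cov}(S(x,T),r)\leq\textup{Cov}_{f^T}(\textup{Geod}_\sigma(\overline{B}(x,R)),r+\varepsilon_L)\leq\textup{Cov}(\overline{B}(x,R+L),r/4)\cdot\textup{Cov}(\overline{B}(x,R+T+L),r/4)$,
whose engine --- pinning $f^T(\gamma,\gamma')$ by the two values of $u\mapsto d(\gamma(u),\gamma'(u))$ at times $-L$ and $T+L$, via convexity of that function and the tail bound coming from property (d) of $\mathcal{F}$ --- is the same mechanism the paper uses inside the Key Lemma, but applied globally to $\textup{Geod}_\sigma(\overline{B}(x,R))$ rather than to a single $f^T$-ball. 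This eliminates the $\varepsilon T$ overshoot (the position at time $T+L$ is simply confined to $\overline{B}(x,R+T+L)$, whose covering number is exactly the quantity defining $\overline{h_{\textup{Cov}}}(X)$) and collapses (i)--(iv) into a single estimate; the trade-off is modularity, since the paper reuses the Key Lemma verbatim for the relative versions in Section \ref{sec-closed-subsets}, where your global covering would have to be reworked. One small imprecision: the scale change $r\mapsto r+\varepsilon_L$ happens inside $\textup{Cov}_{f^T}$, to which Proposition \ref{cov-ent-pack} does not literally apply; no separate rescaling lemma is needed, though, because for each target scale $\rho$ you may take $r=\rho/2$ and $L$ with $\varepsilon_L\leq\rho/2$, so the sandwich holds at every fixed scale --- which is precisely (iv). (You also obtain equality in (iii); that is stronger than the stated inequality but true, and is the content of Remark \ref{asymptotic-lipschitz}.)
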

\noindent The proposition is consequence of the following fundamental key lemma.
\begin{lemma}[Key Lemma]
	\label{residualentropy}
	Let $f\in \mathcal{F}$, $\gamma \in \textup{Geod}_\sigma(X)$ and $0<r\leq r'$. Then
	$$\frac{1}{T}\log \textup{Cov}_{f^T}(\overline{B}_{f^T}(\gamma,r'), r) \underset{P_0,r_0,r,r',f}{\asymp} 0,$$
	where $\overline{B}_{f^T}(\gamma,r')$ is the closed ball of center $\gamma$ and radius $r'$ with respect to the metric $f^T$.
	We remark that the convergence is uniform in $\gamma$.
\end{lemma}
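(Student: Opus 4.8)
The plan is to bound $\mathrm{Cov}_{f^T}(\overline B_{f^T}(\gamma,r'),r)$ by a constant depending only on $P_0,r_0,r,r',f$ but \emph{not} on $T$ or $\gamma$; this immediately gives the $\asymp 0$ conclusion with threshold $T_\varepsilon=\varepsilon^{-1}\log C$. The whole difficulty is that the dynamical distance $f^T(\gamma_1,\gamma_2)=\max_{t\in[0,T]}\int_{\mathbb R} d(\gamma_1(s+t),\gamma_2(s+t))f(s)\,ds$ integrates over all of $\mathbb R$, so it cannot be controlled from the values of the geodesics on the finite window $[0,T]$ alone. My first step is therefore to pass to a slightly larger window. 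Since $f\in\mathcal F$ we have $\int_{\mathbb R}2|s|f(s)\,ds=C(f)<+\infty$, so I fix $A=A(f,r)\geq 0$ with $\int_{|s|>A}2|s|f(s)\,ds\leq r/2$ and set $\epsilon=r/4$.

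Next I claim that if two $\sigma$-geodesic lines $\gamma_1,\gamma_2$ satisfy $d(\gamma_1(u),\gamma_2(u))\leq 2\epsilon$ for every $u\in[-A,T+A]$, then $f^T(\gamma_1,\gamma_2)\leq r$. Writing $g(u)=d(\gamma_1(u),\gamma_2(u))$, the triangle inequality shows $g$ is $2$-Lipschitz, hence $g(u)\leq 2\epsilon+2\,\mathrm{dist}(u,[-A,T+A])$ for all $u$. For $t\in[0,T]$ the substitution $u=s+t$, together with the elementary facts that $s+t\notin[-A,T+A]$ forces $|s|>A$ and that $\mathrm{dist}(s+t,[-A,T+A])\leq|s|$, yields $\int_{\mathbb R}g(s+t)f(s)\,ds\leq 2\epsilon+\int_{|s|>A}2|s|f(s)\,ds\leq r$, uniformly in $t$, proving the claim.

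It remains to produce at most $C$ centres whose $f^T$-balls of radius $r$ cover $\overline B_{f^T}(\gamma,r')$. By Lemma \ref{PropGeod}(iii) applied to $\Phi_t\gamma$ and $\Phi_t\gamma'$, every $\gamma'\in\overline B_{f^T}(\gamma,r')$ satisfies $d(\gamma(t),\gamma'(t))\leq r'$ for $t\in[0,T]$, whence $d(\gamma(u),\gamma'(u))\leq r'':=r'+2A$ for $u\in[-A,T+A]$ by the Lipschitz bound. I fix $\epsilon$-nets $\{a_i\}$ of $\overline B(\gamma(-A),r'')$ and $\{b_j\}$ of $\overline B(\gamma(T+A),r'')$, each of cardinality at most $\mathrm{Cov}(r'',\epsilon)$, which is finite and independent of $T$ and of the basepoint by Proposition \ref{packingsmallscales}. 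For each pair $(i,j)$ realized by some element of the ball I select one representative $\gamma_{ij}\in\overline B_{f^T}(\gamma,r')$ with $d(\gamma_{ij}(-A),a_i)\leq\epsilon$ and $d(\gamma_{ij}(T+A),b_j)\leq\epsilon$. Given arbitrary $\gamma'$ in the ball, choosing $a_i,b_j$ near $\gamma'(-A),\gamma'(T+A)$ and comparing with $\gamma_{ij}$ gives $d(\gamma'(u),\gamma_{ij}(u))\leq 2\epsilon$ at $u=-A$ and $u=T+A$; convexity of $u\mapsto d(\gamma'(u),\gamma_{ij}(u))$ (the same convexity used in Lemma \ref{PropGeod}) propagates this to all of $[-A,T+A]$, and the previous step gives $f^T(\gamma',\gamma_{ij})\leq r$. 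Thus the $\gamma_{ij}$ are $r$-dense and $\mathrm{Cov}_{f^T}(\overline B_{f^T}(\gamma,r'),r)\leq \mathrm{Cov}(r'',\epsilon)^2=:C$.

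The main obstacle is precisely the first step: the contributions to $f^T$ from times near the endpoints $t=0$ and $t=T$ cannot be made small by controlling the geodesics only on $[0,T]$, and a naive estimate leaves the full constant $C(f)$. Enlarging the window to $[-A,T+A]$ and exploiting the integrability condition (d) defining $\mathcal F$ is what kills this boundary term, while the representative-geodesic device sidesteps the parametrization issues one would meet trying to build the centres directly from the endpoint pairs. All quantities are uniform in $\gamma$, so $\tfrac1T\log\mathrm{Cov}_{f^T}(\overline B_{f^T}(\gamma,r'),r)\leq \tfrac1T\log C\to 0$ with threshold depending only on $P_0,r_0,r,r',f$.
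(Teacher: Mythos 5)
Your proof is correct, and it takes a genuinely different route from the paper's. The paper enlarges the time window proportionally, to $[-P,(1+\varepsilon)T]$, covers the ball $B(\gamma(T),r'+\varepsilon T)$ by a net whose cardinality grows like $e^{c\varepsilon T}$, builds the centres explicitly as $\sigma$-geodesic extensions of segments between net points, and concludes only after letting $\varepsilon\to 0$; in particular it never bounds the covering number itself, only its exponential growth rate. You instead enlarge the window by a \emph{fixed} amount $A=A(f,r)$, use the $2$-Lipschitz character of $u\mapsto d(\gamma_1(u),\gamma_2(u))$ together with the tail integrability $\int_{|s|>A}2|s|f(s)\,ds\le r/2$ to absorb everything outside $[-A,T+A]$, take two nets of bounded cardinality $\mathrm{Cov}(r'+2A,r/4)$ at the two window endpoints, and pass to the window's interior by the same convexity the paper uses. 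The payoff is a strictly stronger conclusion: $\mathrm{Cov}_{f^T}(\overline B_{f^T}(\gamma,r'),r)\le C(P_0,r_0,r,r',f)$ uniformly in $T$ and $\gamma$, from which the asymptotic vanishing with uniform threshold is immediate; the paper's argument only yields subexponential growth for each $\varepsilon$. Your representative-selection device (choosing $\gamma_{ij}$ inside the ball for each realized pair) also sidesteps a small imprecision in the paper, whose centres are extensions of segments and need not lie in $\overline B_{f^T}(\gamma,r')$, whereas the covering number as defined requires centres in the set being covered. All the ingredients you invoke — Lemma \ref{PropGeod}(iii), convexity of the distance between two $\sigma$-geodesic lines, and the uniform covering bounds of Proposition \ref{packingsmallscales} — are available exactly as you use them.
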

\begin{proof}
	Let $P>0$ depending only on $f$ and $r'$ such that
	$$\int_{-\infty}^{-P} 2 \vert u \vert f(u) du + \int_P^{+\infty} 2\vert u \vert f(u)du < \frac{r}{4}.$$
	We fix $\varepsilon > 0$ and $T\geq \frac{P}{\varepsilon}$. Let $E_T = \lbrace x_1, \ldots, x_N \rbrace$ be a maximal $\frac{r}{16}$-separated subset of $B(\gamma(T), r' + \varepsilon T)$, so it is also $\frac{r}{16}$-dense, and $\lbrace y_1,\ldots, y_M \rbrace$ be a $\frac{r}{16}$-dense subset of $B(\gamma(-P),r' + 2P)$. 
	For every $i=1,\ldots,M$ and $j=1,\ldots, N$ we take a $\sigma$-geodesic line $\gamma_{ij}$ extending the $\sigma$-geodesic segment $[y_i,x_j]$. We parametrize $\gamma_{ij}$ in such a way that $\gamma_{ij}(-P)=y_i$. The claim is that $\lbrace \gamma_{ij} \rbrace_{i,j}$ is a $r$-dense subset of $\overline{B}_{f^T}(\gamma, r')$ with respect to the metric $f^T$.
	We fix $\gamma' \in \overline{B}_{f^T}(\gamma, r')$. This means 
	$$\max_{t\in [0,T]} f^t(\gamma', \gamma) = \max_{t\in [0,T]} f(\Phi_{t}(\gamma'), \Phi_{t}(\gamma)) \leq r'.$$
	In particular for all $t\in [0,T]$ we get $d(\gamma'(t), \gamma(t))\leq r'$, since 
	$$d(\gamma'(t),\gamma(t)) =  d(\Phi_t(\gamma'),\Phi_t(\gamma)) \leq f(\Phi_t(\gamma'),\Phi_t(\gamma)) \leq r'.$$
	Therefore $d(\gamma'(-P), \gamma(-P)) \leq r' + 2P.$
	Moreover
	$$d(\gamma'(T + \varepsilon T), \gamma(T)) \leq d(\gamma'(T + \varepsilon T), \gamma'(T)) + d(\gamma'(T), \gamma(T)) \leq \varepsilon T + r'.$$
	Thus there exists $x_j$ such that $d(x_j, \gamma'(T + \varepsilon T)) \leq \frac{r}{16}$ and $y_i$ such that $d(y_i, \gamma'(-P)) \leq \frac{r}{16}$.
	We have $d(\gamma_{ij}(-P), \gamma'(-P)) \leq \frac{r}{16}$, so if we denote with $t_j$ the time such that $\gamma_{ij}(t_j)=x_j$ it holds $\vert t_j - (T + \varepsilon T) \vert \leq \frac{r}{8}$. Then
	\begin{equation*}
		\begin{aligned}
			d(\gamma_{ij}(T + \varepsilon T), \gamma'(T + \varepsilon T)) &\leq d(\gamma_{ij}(T + \varepsilon T), \gamma_{ij}(t_j)) + d(\gamma_{ij}(t_j), \gamma'(T + \varepsilon T)) \\
			&\leq \frac{r}{8} + \frac{r}{16} < \frac{r}{4}.
		\end{aligned}
	\end{equation*}
	From the convexity of $\sigma$ we have $d(\gamma'(u), \gamma_{ij}(u)) < \frac{r}{4}$ for all $u\in [-P, (1+\varepsilon)T]$.
	For $t\in [0,T]$ we have
	\begin{equation*}
		\begin{aligned}
			f^t(\gamma', \gamma_{ij}) &= \int_{-\infty}^{+\infty} d(\gamma'(u), \gamma_{ij}(u)) f( u - t) du \\
			&\leq\int_{-\infty}^{-P}\bigg(\frac{r}{4} + 2\vert u + P \vert \bigg) f( u - t) du + \\
			&+\int_{-P}^{ (1+\varepsilon)T} \frac{r}{4} f(u - t)du +\\
			&+ \int_{(1+\varepsilon)T}^{+\infty} \bigg(\frac{r}{4} + 2\vert u - (1+\varepsilon)T \vert \bigg) f(u - t) du.
		\end{aligned}
	\end{equation*}
	The first term can be estimated as follows
	\begin{equation*}
		\begin{aligned}
			\int_{-\infty}^{-P}\bigg(\frac{r}{4} + 2\vert u + P \vert \bigg) f( u - t) du &\leq \frac{r}{4} + \int_{-\infty}^{-P-t} 2\vert v + t + P \vert f(v) dv \\
			&\leq \frac{r}{4} + \int_{-\infty}^{-P} 2\vert v \vert f(v) dv.
		\end{aligned}
	\end{equation*}
	The second term is less than or equal to $\frac{r}{4}$. The third term can be controlled in this way:
	\begin{equation*}
		\begin{aligned}
			\int_{(1+\varepsilon)T}^{+\infty} \bigg(\frac{r}{4} + 2\vert u - (1+\varepsilon)T \vert\bigg) f(u - t) du &\leq \frac{r}{4} + \int_{(1+\varepsilon)T -t}^{+\infty} 2 \vert v - (1+\varepsilon)T + t \vert f(v) dv \\
			&\leq \frac{r}{4} + \int_{(1+\varepsilon)T -t}^{+\infty} 2 \vert v \vert f(v) dv \\
			&\leq \frac{r}{4} + \int_{P}^{+\infty} 2 \vert v \vert f(v) dv.
		\end{aligned}
	\end{equation*}
	The last inequality follows from $T\geq \frac{P}{\varepsilon}$.
	Therefore 
	$$f^t(\gamma', \gamma_{ij}) \leq \frac{r}{4} + \frac{r}{4} + \frac{r}{4} + \int_{-\infty}^{-P} 2 \vert v \vert f(v) dv + \int_{P}^{+\infty} 2 \vert v \vert f(v) dv \leq r.$$
	We conclude that 
	$$\text{Cov}_{f^T}(\overline{B}_{f^T}(\gamma, r'), r) \leq \text{Cov}\bigg(r'+2P, \frac{r}{16}\bigg) \cdot \# E_T.$$
	From Proposition \ref{packingsmallscales}, if $\rho = \min\left\lbrace r_0, \frac{r}{16}\right\rbrace$, we get $\# E_T \leq P_0(1+P_0)^{\frac{r'+\varepsilon T}{\rho} - 1}.$
	Thus
	\begin{equation*}
		\begin{aligned}
			\frac{1}{T}\log \text{Cov}_{f^T}(\overline{B}_{f^T}(\gamma,r'), r) &\leq 
			\frac{1}{T} K(P_0,r_0,r,r',f)\cdot \frac{\varepsilon T}{\rho}\log(1+P_0) \\
			&= \varepsilon\cdot K'(P_0,r_0,r,r',f).
		\end{aligned}
	\end{equation*}
	Here $K, K'$ are constants depending only on $P_0,r_0,r,r',f$ and not on $\varepsilon$ or $\gamma$.
	So from the arbitrariness of $\varepsilon$ we achieve the proof.
\end{proof}

The computation of $\overline{h_f}$ requires to consider the supremum among all compact subsets of $\text{Geod}_\sigma(X)$. We notice that given a compact subset $K\subseteq \text{Geod}_\sigma(X)$, the set $E(K)$ is compact since $E$ is continuous. In particular it is bounded, hence contained in a ball $\overline{B}(x,R)$ centered at a reference point $x \in X$. We observe also that the set $\text{Geod}_\sigma(\overline{B}(x,R))$ is compact since the evaluation map $E$ is proper. We conclude that any compact subset of $\text{Geod}_\sigma(X)$ is contained in a compact subset of the form $\text{Geod}_\sigma(\overline{B}(x,R))$ and therefore in order to compute $\overline{h_f}$ it is enough to take the supremum among these sets.
The main consequence of Lemma \ref{residualentropy} is the following result, which is the key ingredient in the proof of Proposition \ref{entropyrelations}.
\begin{cor}
	\label{entropyrescale}
	Let $f\in \mathcal{F}$, $x\in X$, $R\geq 0$ and $0 < r \leq r'$. Then
	$$\frac{1}{T}\log \textup{Cov}_{f^T}(\textup{Geod}_\sigma(\overline{B}(x,R)),r) \underset{P_0,r_0,r,r',f}{\asymp} \frac{1}{T} \log \textup{Cov}_{f^T}(\textup{Geod}_\sigma(\overline{B}(x,R)),r').$$
\end{cor}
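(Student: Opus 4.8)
\noindent The plan is to derive the Corollary from the Key Lemma (Lemma~\ref{residualentropy}) by a standard cover-and-refine (sub-multiplicativity) argument, the crucial input being that the estimate in the Key Lemma is uniform in the center. Throughout write $Y = \textup{Geod}_\sigma(\overline{B}(x,R))$. One direction is immediate from monotonicity of the covering function: since $r \le r'$, every $r$-dense subset of $Y$ with respect to $f^T$ is automatically $r'$-dense, so $\textup{Cov}_{f^T}(Y, r) \ge \textup{Cov}_{f^T}(Y, r')$, and dividing by $T$ and taking logarithms shows the left-hand function dominates the right-hand one, with no threshold needed.

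For the reverse estimate I would cover $Y$ at scale $r'$ and refine each ball at a smaller scale. Fix a minimal $r'$-dense subset $\lbrace \gamma_1, \dots, \gamma_M \rbrace \subseteq Y$ with $M = \textup{Cov}_{f^T}(Y, r')$, so that $Y \subseteq \bigcup_i \overline{B}_{f^T}(\gamma_i, r')$. For each $i$, Lemma~\ref{residualentropy} applied to the pair of scales $\frac{r}{2} \le r'$ provides an $\frac{r}{2}$-dense subset of $\overline{B}_{f^T}(\gamma_i, r')$ of cardinality at most $N_T := \sup_{\eta} \textup{Cov}_{f^T}(\overline{B}_{f^T}(\eta, r'), \frac{r}{2})$. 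Discarding the centers whose $\frac{r}{2}$-ball misses $Y$ and replacing each surviving center by a point of $Y$ within $\frac{r}{2}$ of it (which enlarges the radius from $\frac{r}{2}$ to $r$) yields an $r$-dense subset of $Y$ with centers in $Y$, of cardinality at most $M \cdot N_T$. Hence
$$\textup{Cov}_{f^T}(Y, r) \le \textup{Cov}_{f^T}(Y, r') \cdot N_T,$$
and therefore
$$\frac{1}{T}\log \textup{Cov}_{f^T}(Y, r) \le \frac{1}{T}\log \textup{Cov}_{f^T}(Y, r') + \frac{1}{T}\log N_T.$$

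The heart of the matter is the last term. By Lemma~\ref{residualentropy} the quantity $\frac{1}{T}\log \textup{Cov}_{f^T}(\overline{B}_{f^T}(\eta, r'), \frac{r}{2})$ tends to $0$ with a threshold depending only on $P_0, r_0, r, r', f$ and, crucially, uniformly in $\eta$; consequently the same holds for its supremum $\frac{1}{T}\log N_T$. Combining the displayed inequality with the trivial direction of the first paragraph yields $\frac{1}{T}\log \textup{Cov}_{f^T}(Y, r) \asymp \frac{1}{T}\log \textup{Cov}_{f^T}(Y, r')$ with threshold depending only on $P_0, r_0, r, r', f$, which is exactly the assertion.

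I do not anticipate a genuine obstacle: the only points demanding care are the uniformity of the threshold across the supremum over centers $\eta$, which is precisely the uniformity asserted in the Key Lemma, and the minor bookkeeping of passing from centers lying in the balls $\overline{B}_{f^T}(\gamma_i, r')$ to centers lying in $Y$ itself, which is handled cleanly by refining at scale $\frac{r}{2}$ rather than $r$ so that the subsequent enlargement of the radius costs nothing asymptotically.
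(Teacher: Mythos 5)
Your proposal is correct and follows essentially the same route as the paper: both establish the sub-multiplicative bound $\textup{Cov}_{f^T}(Y,r) \leq \textup{Cov}_{f^T}(Y,r') \cdot \sup_\gamma \textup{Cov}_{f^T}(\overline{B}_{f^T}(\gamma,r'),\cdot)$ and then invoke the uniformity in the center from the Key Lemma \ref{residualentropy}. Your extra step of refining at scale $\frac{r}{2}$ so as to relocate centers into $Y$ is a minor bookkeeping refinement that the paper elides, but it does not change the argument.
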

\begin{proof}
	The quantity $\frac{1}{T}\log \text{Cov}_{f^T}(\text{Geod}_\sigma(\overline{B}(x,R)),r)$ is
	\begin{equation*}
		\begin{aligned}
			&\leq \frac{1}{T} \log \text{Cov}_{f^T}(\text{Geod}_\sigma(\overline{B}(x,R)), r') \cdot \sup_{\gamma \in \text{Geod}_\sigma(X)} \text{Cov}_{f^T}(\overline{B}_{f^T}(\gamma, r'), r) \\
			&=\frac{1}{T} \big(\log \text{Cov}_{f^T}(\text{Geod}_\sigma(\overline{B}(x,R)), r') + \log \sup_{\gamma \in X} \text{Cov}_{f^T}(\overline{B}_{f^T}(\gamma, r'), r) \big)
		\end{aligned}
	\end{equation*}
	The conclusion follows by Lemma \ref{residualentropy}.
\end{proof}

\begin{proof}[Proof of Proposition \ref{entropyrelations}.(ii)]
	Let $\varepsilon > 0$ and $T > \frac{R}{\varepsilon}$.
	Let $\gamma_1, \ldots, \gamma_N$ be a $r$-dense subset of Geod$_\sigma(x)$ with respect to the metric $f^{(2+\varepsilon)T}$. The claim is that $\lbrace\gamma_i \rbrace$ is a $K$-dense subset of $\text{Geod}_\sigma(\overline{B}(x,R))$ with respect to $f^T$, where $K$ depends only on $r,R$ and $f$.
	We consider a $\sigma$-geodesic line $\gamma \in \text{Geod}_\sigma(\overline{B}(x,R))$. Then there exists a $\sigma$-geodesic line $\gamma' \in \text{Geod}_\sigma(x)$ extending the $\sigma$-geodesic segment $[x,\gamma((1+\varepsilon)T)]$. We call $t_{\gamma'}$ the time such that $\gamma'(t_{\gamma'}) = \gamma((1+\varepsilon)T)$. Then
	\begin{equation*}
		\begin{aligned}
			t_{\gamma'} = d(x, \gamma((1+\varepsilon)T)) &\leq d(x,\gamma(0)) + d(\gamma(0),\gamma((1+\varepsilon)T))\\
			&\leq R + (1+\varepsilon)T \leq (1+2\varepsilon)T
		\end{aligned}
	\end{equation*}
	since $T\geq \frac{R}{\varepsilon}$. Moreover $\vert t_{\gamma'} - (1+\varepsilon)T\vert \leq R.$
	We know there exists $\gamma_i$ such that 
	$\max_{t\in [0,(1+2\varepsilon)T]}f(\Phi_t \gamma', \Phi_t \gamma_i) \leq r.$
	In particular $d(\gamma'(t_{\gamma'}), \gamma_i(t_{\gamma'}))\leq r$.
	Then $d(\gamma((1+\varepsilon)T), \gamma_i(t_{\gamma'})) \leq r$ and in conclusion
	$$d(\gamma((1+\varepsilon)T), \gamma_i((1+\varepsilon)T)) \leq d(\gamma((1+\varepsilon)T), \gamma_i(t_{\gamma'})) + d(\gamma_i(t_{\gamma'}), \gamma_i((1+\varepsilon)T)) \leq r + R.$$
	From the convexity of $\sigma$ we have $d(\gamma(t), \gamma_i(t)) \leq R + r$ for all $t\in [0,(1+\varepsilon)T]$.
	We have to estimate $f^t(\gamma, \gamma_i) = \int_{-\infty}^{+\infty} d(\gamma(u),\gamma_i(u)) f(u - t) du$ for every $t\in [0,T]$.
	Since $d(\gamma(0),\gamma_i(0))\leq R$ 
	and $d(\gamma((1+\varepsilon)T),\gamma_i((1+\varepsilon)T))\leq r + R$ then 
	\begin{equation*}
		\begin{aligned}
			\int_{-\infty}^{+\infty} d(\gamma(u),\gamma_i(u)) f(u - t) du \leq &\int_{-\infty}^0 (R+2\vert u \vert) f(u-t)du + \\
			+ &\int_0^{(1+\varepsilon)T} (R + r) f(u - t)du + \\
			+ &\int_{(1+\varepsilon)T}^{+\infty} (R + r + 2\vert u - (1+\varepsilon)T \vert) f(u - t)du 
		\end{aligned}
	\end{equation*}
	$$\leq R + \int_{-\infty}^{-t}2\vert v + t \vert f(v)dv + (R + r) + \int_{(1+\varepsilon)T - t}^{+\infty}(R + r + 2\vert v -(1+\varepsilon)T + t \vert) f(v) dv.$$
	We conclude that the above quantity is less than or equal to 
	$$3R + 2r + \int_{-\infty}^{0}2\vert v \vert f(v)dv + \int_{0}^{+\infty} 2\vert v \vert f(v)dv \leq 3R + 2r + C(f) = K(R,r,f).$$
	By the previous corollary $\overline{h_f}(\text{Geod}_\sigma(\overline{B}(x,R)))$can be computed as 
	$$\limsup_{T\to + \infty}\frac{1}{T} \log \text{Cov}_{f^T}(\text{Geod}_\sigma(\overline{B}(x,R)), K)$$
	which is
	\begin{equation*}
		\begin{aligned}
			&\leq \limsup_{T\to + \infty} \frac{1}{T} \log \text{Cov}_{f^{(1+2\varepsilon)T}}(\text{Geod}_\sigma(x),r) \\
			&= (1+2\varepsilon)\limsup_{T\to + \infty} \frac{1}{T} \log \text{Cov}_{f^T}(\text{Geod}_\sigma(x),r).
		\end{aligned}
	\end{equation*}
	Since this is true for all $\varepsilon > 0$ then we obtain the thesis.
\end{proof}

\begin{proof}[Proof of Proposition \ref{entropyrelations}.(i)]
	We have $y\in \overline{B}(x,R)$, where $R=d(x,y)$, so $\text{Geod}_\sigma(y)\subseteq \text{Geod}_\sigma(\overline{B}(x,R))$. Therefore 
	$$\overline{h}_f(\text{Geod}_\sigma(y)) \leq \overline{h}_f(\text{Geod}_\sigma(\overline{B}(x,R))) = \overline{h}_f(\text{Geod}_\sigma(x)).$$
	The other inequality can be proved in the same way.
\end{proof}

Finally we achieve the proof of the remaining parts of Proposition $\ref{entropyrelations}$.
\begin{proof}[Proof of Proposition \ref{entropyrelations}.(iii) \& (iv)]
	The equality in (iii) follows directly from (ii), so
	$$\overline{h_f}(\text{Geod}_\sigma(X)) = \limsup_{T \to +\infty}\frac{1}{T}\log \text{Cov}_{f^T}(\text{Geod}_\sigma(x),r_0),$$
	where $x$ is a point of $X$.
	We fix $T > 0$ and we consider a $r_0$-separated subset $E_T$ of $S(x_0,T)$ of maximal cardinality, which is also $r_0$-dense. For all $y \in E_T$ we consider a $\sigma$-geodesic line $\gamma_y$ extending the $\sigma$-geodesic segment $[x_0,y]$ such that $\gamma_y(0)=x_0$ and $\gamma_y(T)=y$. We claim that $\lbrace \gamma_y \rbrace_{y\in E_T}$ is a $(r_0 + C(f))$-dense subset of $\text{Geod}_\sigma(x)$ with respect to $f^T$.
	We take a $\sigma$-geodesic line $\gamma \in \text{Geod}_\sigma(x)$. Then there exists $y \in E_T$ such that $d(\gamma(T),y) = d(\gamma(T),\gamma_y(T))\leq r_0.$
	From the convexity of $\sigma$ it holds
	$d(\gamma(u),\gamma_y(u))\leq r_0$
	for all $u\in [0,T]$. Moreover $d(\gamma(u), \gamma_y(u))\leq r_0 + 2\vert u - T \vert$ for all $u \in [T,+\infty)$ and $d(\gamma(u),\gamma_y(u))\leq 2\vert u \vert $ for all $u\in (-\infty, 0]$.
	Then for all $t\in [0,T]$ we get
	\begin{equation*}
		\begin{aligned}
			f^t(\gamma, \gamma_y) &= \int_{-\infty}^{+\infty} d(\gamma(u),\gamma_y(u)) f(u - t) du \\ &\leq \int_{-\infty}^0 2\vert u \vert f(u - t) du + \int_0^{T} r_0 f(u - t)du + \\
			&+ \int_{T}^{+\infty} (r_0 + 2\vert u - T \vert) f(u - t)du \leq r_0 + C(f).
		\end{aligned}
	\end{equation*}
	The last inequality follows from similar estimates given in the proofs of Lemma \ref{residualentropy}.
	Therefore applying Corollary \ref{entropyrescale} we have
	\begin{equation*}
		\begin{aligned}
			\limsup_{T\to +\infty}\frac{1}{T}\log \text{Cov}_{f^T}(\text{Geod}_\sigma(x),r_0) \leq \limsup_{T\to +\infty}\frac{1}{T}\log\text{Cov}(S(x,T), r_0).
		\end{aligned}
	\end{equation*}
	This, together with Proposition \ref{spherical-entropy}, proves (iii).
	We observe that (iv) is exactly Corollary \ref{entropyrescale} with $R=0$.
\end{proof}

\subsection{Proof of Theorem \ref{lip-top_cov}}
\label{subsec-liptop-proof}

We are ready to give the
\begin{proof}[Proof of Theorem \ref{lip-top_cov}] Proposition \ref{entropyrelations}.(iii) shows that $\overline{h_\text{Lip-top}}(\text{Geod}_\sigma(X))$ is less than or equal to $\overline{h_\text{Cov}}(X)$. \\
	In order to prove the other inequality we fix a geometric metric \texthtd\, on $\text{Geod}_\sigma(X)$ and we denote by $M$ the Lipschitz constant with respect to \texthtd\, of the evaluation map $E$. Then we have
	$$\sup_K \lim_{r\to 0} \limsup_{T\to + \infty}\frac{1}{T}\log\text{Cov}_{\text{\texthtd}^T}(K,r) \geq \limsup_{T\to + \infty}\frac{1}{T}\log\text{Cov}_{\text{\texthtd}^T}(\text{Geod}_\sigma(x),r_0),$$
	where $x\in X.$
	We fix $T\geq 0$ and we consider a set $\gamma_1,\ldots,\gamma_N$ realizing \linebreak $\text{Cov}_{\text{\texthtd}^T}(\text{Geod}_\sigma(x),r_0)$. The claim is that $\gamma_i(T)$ is a $Mr_0$-dense subset of $S(x,T)$. Indeed we take a point $y\in S(x,T)$ and we extend the $\sigma$-geodesic segment $[x,y]$ to a $\sigma$-geodesic line $\gamma \in \text{Geod}_\sigma(x)$. Then there exists $\gamma_i$ such that \texthtd$^T(\gamma, \gamma_i)\leq r_0$. Since the evaluation map is $M$-Lipschitz we have 
	$$d(y,\gamma_i(T)) = d(\gamma(T),\gamma_i(T)) = d(\Phi_T\gamma(0), \Phi_T\gamma_i(0)) \leq L\text{\texthtd}(\Phi_T\gamma, \Phi_T\gamma_i) \leq Mr_0.$$
	Therefore
	$$\limsup_{T\to + \infty}\frac{1}{T}\log\text{Cov}_{\text{\texthtd}^T}(\text{Geod}_\sigma(x),r_0) \geq \limsup_{T\to + \infty}\frac{1}{T}\log\text{Cov}(S(x,T),Mr_0)$$
	and the conclusion follows by Proposition \ref{spherical-entropy}.
\end{proof}
\begin{obs}
	\label{asymptotic-lipschitz}
	By Proposition \ref{entropyrelations} and Theorem \ref{lip-top_cov} the upper Lipschitz-topological entropy of $X$ can be computed as
	$$\overline{h_{\textup{Lip-top}}}(X) = \limsup_{T\to +\infty}\frac{1}{T}\log \textup{Cov}_{f^T}(\textup{Geod}_\sigma(x), r)$$
	independently of $f\in \mathcal{F}$, $x\in X$ and $r>0$. Moreover 
	$$\frac{1}{T}\log \textup{Cov}_{f^T}(\textup{Geod}_\sigma(x), r_0) \underset{P_0,r_0, f}{\asymp} \frac{1}{T}\log\textup{Cov}(\overline{B}(x,T),r_0)$$
	by the proofs of Theorem \ref{lip-top_cov} and Proposition \ref{entropyrelations} and by Proposition \ref{spherical-entropy}.
\end{obs}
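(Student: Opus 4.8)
The plan is to read the Remark as two independent assertions and to derive each by assembling results already proved, introducing no new argument. The first assertion identifies $\overline{h_{\textup{Lip-top}}}(\textup{Geod}_\sigma(X))$ with the pointed $f$-entropy $\limsup_{T\to+\infty}\frac{1}{T}\log\textup{Cov}_{f^T}(\textup{Geod}_\sigma(x),r)$ for every $f\in\mathcal F$, every $x\in X$ and every $r>0$; the second is the finer statement that, at the fixed scale $r_0$, the relevant functions of $T$ are $\asymp$-equivalent with threshold depending only on $P_0,r_0,f$. The soft, limit-level first assertion will come from a collapse-of-inequalities argument, whereas the sharp $\asymp$-assertion will require retaining the finite-$T$ covering inequalities contained in the earlier proofs.

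For the first assertion I would argue as follows. Every $f\in\mathcal F$ induces a geometric metric by Lemma \ref{PropGeod}.(iii), so by the definition of the Lipschitz-topological entropy as an infimum over geometric metrics one has $\overline{h_{\textup{Lip-top}}}(\textup{Geod}_\sigma(X))\le \overline{h_f}(\textup{Geod}_\sigma(X))$. By Proposition \ref{entropyrelations}.(iii) the right-hand side equals $\overline{h_f}(\textup{Geod}_\sigma(x))$ and is bounded above by $\overline{h_{\textup{Cov}}}(X)$, while Theorem \ref{lip-top_cov} gives $\overline{h_{\textup{Lip-top}}}(\textup{Geod}_\sigma(X))=\overline{h_{\textup{Cov}}}(X)$. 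These three relations force every inequality to be an equality, whence $\overline{h_f}(\textup{Geod}_\sigma(x))=\overline{h_{\textup{Lip-top}}}(\textup{Geod}_\sigma(X))$ for all $f$ and all $x$. Since $\textup{Geod}_\sigma(x)=E^{-1}(x)$ is compact, $\overline{h_f}(\textup{Geod}_\sigma(x))$ is the value of $\lim_{r\to 0}\limsup_T\frac{1}{T}\log\textup{Cov}_{f^T}(\textup{Geod}_\sigma(x),r)$, and Proposition \ref{entropyrelations}.(iv) tells us that this function of $r$ is constant; hence its limit as $r\to 0$ equals its value at any fixed $r>0$, which is exactly the displayed formula.

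For the second assertion I would not pass to the limit but keep the two finite-$T$ covering inequalities established inside the earlier proofs. From the proof of Theorem \ref{lip-top_cov}, applied to $f$ (whose evaluation map is $1$-Lipschitz, again by Lemma \ref{PropGeod}.(iii)), projecting an $f^T$-covering of $\textup{Geod}_\sigma(x)$ to time $T$ produces an $r_0$-dense subset of the sphere, giving $\textup{Cov}(S(x,T),r_0)\le \textup{Cov}_{f^T}(\textup{Geod}_\sigma(x),r_0)$. In the reverse direction, the construction in the proof of Proposition \ref{entropyrelations}.(iii) builds from a maximal $r_0$-separated subset $E_T$ of $S(x,T)$ an $(r_0+C(f))$-dense family of geodesics, yielding $\textup{Cov}_{f^T}(\textup{Geod}_\sigma(x),r_0+C(f))\le \#E_T\le \textup{Pack}(\overline{B}(x,T),\tfrac{r_0}{2})$, the last step because $E_T\subseteq \overline{B}(x,T)$ and packing is monotone in the set. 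It then remains to glue these two bounds with the asymptotic identifications already available: Corollary \ref{entropyrescale} replaces the scale $r_0+C(f)$ by $r_0$ on the geodesic side, Proposition \ref{spherical-entropy} trades the sphere for the ball on the lower side, and Proposition \ref{cov-ent-pack} absorbs the packing-to-covering and the scale change $\tfrac{r_0}{2}\leftrightarrow r_0$ on the upper side, each at the cost of an $\asymp$ with threshold depending only on $P_0,r_0$ (and on $f$ for the first). Since $\asymp$ is transitive and combines with one-sided inequalities to give two-sided asymptotic control, chaining these equivalences across the two inequalities yields $\frac{1}{T}\log\textup{Cov}_{f^T}(\textup{Geod}_\sigma(x),r_0)\underset{P_0,r_0,f}{\asymp}\frac{1}{T}\log\textup{Cov}(\overline{B}(x,T),r_0)$.

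I expect the only genuine obstacle to be bookkeeping rather than conceptual. The delicate point is to verify that none of the scale-change steps reintroduces a dependence on $x$ or on $T$ in the threshold: this is guaranteed because Proposition \ref{spherical-entropy} and Proposition \ref{cov-ent-pack} have thresholds uniform in the basepoint, and because Corollary \ref{entropyrescale} rests on the uniform-in-$\gamma$ residual bound of Lemma \ref{residualentropy}. The secondary subtlety is the additive constant $C(f)$, which is precisely the kind of scale defect that Corollary \ref{entropyrescale} is designed to erase, so it costs nothing asymptotically. Everything else is a routine combination of the two one-sided estimates through the common sphere-covering function.
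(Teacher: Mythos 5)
Your proposal is correct and follows essentially the same route the paper intends: the first identity is exactly the collapse of $\overline{h_{\textup{Lip-top}}}\leq\overline{h_f}(\textup{Geod}_\sigma(x))\leq\overline{h_{\textup{Cov}}}(X)=\overline{h_{\textup{Lip-top}}}$ combined with Proposition \ref{entropyrelations}.(iv), and the second is obtained by retaining the two finite-$T$ covering inequalities from the proofs of Theorem \ref{lip-top_cov} (with the evaluation map $1$-Lipschitz for $f$ by Lemma \ref{PropGeod}.(iii)) and of Proposition \ref{entropyrelations}.(iii), then chaining them through Corollary \ref{entropyrescale}, Proposition \ref{spherical-entropy} and Proposition \ref{cov-ent-pack}, all of whose thresholds depend only on $P_0,r_0,f$. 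No gaps.
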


\section{Gromov-hyperbolic metric spaces}
In the second part of the paper we will study the versions of the entropies introduced in the first part relative to subsets of the boundary at infinity. In order to have meaningful definitions we will consider Gromov-hyperbolic metric spaces.
\vspace{2mm}

\noindent Let $X$ be a geodesic space. Given  three points  $x,y,z \in X$,  the {\em Gromov product} of $y$ and $z$ with respect to $x$  is defined as
\vspace{-3mm}

$$(y,z)_x = \frac{1}{2}\big( d(x,y) + d(x,z) - d(y,z) \big).$$

\noindent The space $X$ is said {\em $\delta$-hyperbolic} if   for every four points $x,y,z,w \in X$   the following {\em 4-points condition} hold:
\begin{equation}\label{hyperbolicity}
	(x,z)_w \geq \min\lbrace (x,y)_w, (y,z)_w \rbrace -  \delta. 
\end{equation}

\noindent The space $X$ is   {\em Gromov hyperbolic} if it is $\delta$-hyperbolic for some $\delta \geq 0$. 

\noindent Let $X$ be a proper, $\delta$-hyperbolic metric space $x$ be a point of $X$. \\
The {\em Gromov boundary} of $X$ is defined as the quotient 
$$\partial X = \lbrace (z_n)_{n \in \mathbb{N}} \subseteq X \hspace{1mm} | \hspace{1mm}   \lim_{n,m \to +\infty} (z_n,z_m)_{x} = + \infty \rbrace \hspace{1mm} /_\sim,$$
where $(z_n)_{n \in \mathbb{N}}$ is a sequence of points in $X$ and $\approx$ is the equivalence relation defined by $(z_n)_{n \in \mathbb{N}} \sim (z_n')_{n \in \mathbb{N}}$ if and only if $\lim_{n,m \to +\infty} (z_n,z_m')_{x} = + \infty$.  \linebreak
We will write $ z = [(z_n)] \in \partial X$ for short, and we say that $(z_n)$ {\em converges} to $z$. This definition  does not depend on the basepoint $x$. \\
There is a natural topology on $X\cup \partial X$ that extends the metric topology of $X$. 
The Gromov product can be extended to points $z,z'  \in \partial X$ by 
$$(z,z')_{x} = \sup_{(z_n) , (z_n') } \liminf_{n,m \to + \infty} (z_n, z_m')_{x}$$
where the supremum is taken among all sequences such that $(z_n) \approx z$ and $(z_n')\approx z'$.
For every $z,z',z'' \in \partial X$ it continues to hold
\begin{equation}
	\label{hyperbolicity-boundary}
	(z,z')_{x} \geq \min\lbrace (z,z'')_{x}, (z',z'')_{x} \rbrace - \delta.
\end{equation}
Moreover for all sequences $(z_n),(z_n')$ converging to  $z,z'$ respectively it holds
\begin{equation}
	\label{product-boundary-property}
	(z,z')_{x} -\delta \leq \liminf_{n,m \to + \infty} (z_n,z_m')_{x} \leq (z,z')_{x}.
\end{equation}
The Gromov product between a point $y\in X$ and a point $z\in \partial X$ is defined in a similar way and it  satisfies a condition analogue of  \eqref{product-boundary-property}.

\noindent Every geodesic ray $\xi$ defines a point  $\xi^+=[(\xi(n))_{n \in \mathbb{N}}]$  of the Gromov boundary $ \partial X$: we  say that $\xi$ {\em joins} $\xi(0) = y$ {\em to} $\xi^+ = z$, and we denote it by  $[y, z]$. Moreover for every $z\in \partial X$ and every $x\in X$ it is possible to find a geodesic ray $\xi$ such that $\xi(0)=x$ and $\xi^+ = z$. Indeed if $(z_n)$ is a sequence of points converging to $z$ then, by properness of $X$, the sequence of geodesics $[x,z_n]$ converges to a geodesic ray $\xi$ which has the properties above (cp. Lemma III.3.13 of \cite{BH09}). We denote any of these geodesic rays as $\xi_{xz} = [x,z]$ even if it is possibly not unique. \\
Given different points $z = [(z_n)], z' = [(z'_n)] \in \partial X$ there always exists  a geodesic line $\gamma$ joining $z$ to $z'$, i.e. such that  $\gamma|_{[0, +\infty)}$ and $\gamma|_{(-\infty,0]}$ join   $\gamma(0)$ to $z,z'$ respectively. We call $z$ and $z'$ the  {\em positive} and {\em negative endpoints} of $\gamma$, respectively,  denoted  $\gamma^+$ and $\gamma^-$. 
\vspace{2mm}

\noindent Here we recall some basic properties of Gromov-hyperbolic metric spaces.
\begin{lemma}[Projection Lemma, cp. Lemma 3.2.7 of \cite{CDP90}]
	\label{projection}
	Let $X$ be a $\delta$-hyperbolic metric space and let $x,y,z \in X$. For every geodesic segment $[y,z]$ we have $(y,z)_x \geq d(x, [y,z]) - 4\delta.$
\end{lemma}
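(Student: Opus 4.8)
The plan is to prove that $(y,z)_x \geq d(x,[y,z]) - 4\delta$ by relating the Gromov product to distances along the geodesic segment $[y,z]$. First I would pick a point $p$ on the geodesic segment $[y,z]$ that realizes (or nearly realizes) the distance $d(x,[y,z])$, so $d(x,p) = d(x,[y,z])$. The key observation is that since $p$ lies on the geodesic between $y$ and $z$, we have the exact additivity $d(y,z) = d(y,p) + d(p,z)$, which lets us compute the Gromov products $(y,p)_x$ and $(p,z)_x$ cleanly.

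The heart of the argument is to apply the $4$-point condition \eqref{hyperbolicity} to the three points $y,z$ together with $p$, based at $x$: this gives
\begin{equation*}
	(y,z)_x \geq \min\lbrace (y,p)_x, (p,z)_x \rbrace - \delta.
\end{equation*}
So it suffices to bound each of $(y,p)_x$ and $(p,z)_x$ from below by $d(x,p) - 3\delta$. I would expand $(y,p)_x = \frac{1}{2}(d(x,y) + d(x,p) - d(y,p))$ and use the triangle inequality $d(x,y) \geq d(y,p) - d(x,p)$... but this naive bound is too weak, giving only $(y,p)_x \geq 0$. The genuine content must come from hyperbolicity again, not just the triangle inequality.

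The better route, which I expect to be the main technical point, is to estimate $(y,p)_x$ using a second application of the $4$-point inequality, this time to the triple $y, p$ and $z$ (or by noting that $p$ itself lies on $[y,z]$). Concretely, since $p \in [y,z]$, one has $(y,z)_p = 0$ exactly, and hyperbolicity based at $x$ comparing the configurations should yield $(y,p)_x \geq d(x,p) - 2\delta$ and symmetrically $(z,p)_x \geq d(x,p) - 2\delta$. Combining with the displayed inequality above then gives $(y,z)_x \geq d(x,p) - 3\delta \geq d(x,[y,z]) - 3\delta$, which is even slightly sharper than claimed; the weaker bound $-4\delta$ then certainly holds and absorbs any slack if the chosen $p$ only approximately realizes the distance.

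The main obstacle will be establishing the intermediate estimate $(y,p)_x \geq d(x,p) - 2\delta$ cleanly. The natural tool is the standard fact that in a $\delta$-hyperbolic space the Gromov product $(y,z)_x$ is within $4\delta$ (or a small multiple of $\delta$) of the distance from $x$ to any geodesic $[y,z]$, which is precisely the quantitative statement that geodesic triangles are $\delta$-thin. I would either invoke the thin-triangles characterization of hyperbolicity directly — comparing the distance from $x$ to the side $[y,z]$ with the distances to the sides $[x,y]$ and $[x,z]$ — or carry out the two-step Gromov-product manipulation above, being careful that each appeal to \eqref{hyperbolicity} costs at most one factor of $\delta$ so that the total error stays within the claimed $4\delta$.
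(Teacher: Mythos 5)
The paper gives no proof of this lemma at all --- it is quoted directly from \cite{CDP90} --- so your attempt has to be judged on its own merits rather than against an argument in the text. Your architecture is sound in outline: with $p\in[y,z]$ realizing $d(x,[y,z])$, inequality \eqref{hyperbolicity} based at $x$ gives $(y,z)_x\geq\min\lbrace (y,p)_x,(p,z)_x\rbrace-\delta$, and the two estimates $(y,p)_x\geq d(x,p)-2\delta$, $(p,z)_x\geq d(x,p)-2\delta$ would indeed finish the proof (even with $3\delta$ in place of $4\delta$). The genuine gap is that you never establish these estimates, and the route you sketch cannot produce them. Applying \eqref{hyperbolicity} based at $x$ to the triple $y,p,z$ only yields either the inequality you already used or $(y,p)_x\geq\min\lbrace (y,z)_x,(z,p)_x\rbrace-\delta$, which is circular because $(y,z)_x$ is the quantity being bounded; the observation $(y,z)_p=0$ plays no role in any of these. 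Worse, the claimed estimate is \emph{false} for a general $p\in[y,z]$: in a tree take $x=y$ and $p=z$, so that $(y,p)_x=0$ while $d(x,p)=d(y,z)$ is arbitrarily large. It can only hold because $p$ is a nearest point, yet your sketch of this step never invokes minimality. Your fallback --- switching to the thin-triangles characterization --- is also not admissible as stated, since the paper's $\delta$ is the four-point constant and converting to the thin-triangle constant and back changes $\delta$ by a multiplicative factor, destroying the explicit $4\delta$.

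The missing step can be supplied, but it needs an auxiliary point and the minimality of $p$. Since $0\leq (x,p)_y\leq d(y,p)$, choose $q\in[y,p]$ with $d(y,q)=(x,p)_y$; because $q$ lies between $y$ and $p$ one computes $(p,q)_y=d(y,q)$, so \eqref{hyperbolicity} based at $y$ gives $(x,q)_y\geq (x,p)_y-\delta$. Combining this with $d(x,q)\geq d(x,p)$ (this is where minimality enters) and expanding both Gromov products yields $d(x,y)\geq d(x,p)+d(y,p)-4\delta$, i.e. $(y,p)_x\geq d(x,p)-2\delta$, and symmetrically for $z$. Alternatively, the classical argument behind Lemma 3.2.7 of \cite{CDP90} bypasses the nearest point entirely: choose $p\in[y,z]$ with $d(y,p)=(x,z)_y$, note $(z,p)_y=d(y,p)$, and apply \eqref{hyperbolicity} based at $y$ to the triple $x,z,p$ to get $(x,p)_y\geq d(y,p)-\delta$, which unwinds to $d(x,[y,z])\leq d(x,p)\leq (y,z)_x+2\delta$ in a single step. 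Either repair makes your proof complete; as written, the central estimate is asserted rather than proved.
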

\noindent We recall that a $(1,\nu)$-quasigeodesic is a curve $\alpha\colon I \to X$ such that 
$$\vert t - t' \vert - \nu \leq d(\alpha(t),\alpha(t')) \leq \vert t - t' \vert + \nu$$
for all $t,t'$ belonging to the interval $I$. As an immediate consequence of the previous lemma and Proposition 2.7 of \cite{CavS20bis} we get:
\begin{lemma}
	\label{projection-quasigeodesic}
	Let $X$ be a $\delta$-hyperbolic metric space, $x\in X$ and $\xi$ be a geodesic ray such that $\xi(0)$ is a projection of $x$ on $\xi$. Then
	\begin{itemize}
		\item[(i)] for all $T\geq 0$ any curve $\alpha=[x, \xi(0)] \cup \xi\vert_{[0,T]}$ is a $(1,4\delta)$-quasigeodesic. Moreover, if $\gamma$ is a geodesic segment $[x,\xi(T)]$, then $d(\alpha(t),\gamma(t))\leq 72\delta$ for all possibles $t$;
		\item[(ii)] any curve $\alpha=[x, \xi(0)] \cup \xi$ is a $(1,4\delta)$-quasigeodesic. Moreover, if $\xi'$ is a geodesic ray $[x,\xi^+]$, then $d(\alpha(t),\xi'(t))\leq 72\delta$ for all $t\geq 0$.
	\end{itemize}
\end{lemma}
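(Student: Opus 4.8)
The plan is to derive both parts from two ingredients that the section has set up for exactly this purpose: the assertion that the concatenation $\alpha$ is a $(1,4\delta)$-quasigeodesic, which I would extract from the Projection Lemma (Lemma \ref{projection}) together with the defining minimality of the projection point $\xi(0)$, and the stability of quasigeodesics in $\delta$-hyperbolic spaces, for which I would invoke Proposition 2.7 of \cite{CavS20bis} to obtain the explicit fellow-traveling constant $72\delta$. Part (ii) is then a limiting case of part (i).

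For (i) I would first show that $\alpha$ is a $(1,4\delta)$-quasigeodesic. Parametrizing $\alpha$ by arc length from $\alpha(0)=x$, it is a concatenation of two geodesic segments and hence $1$-Lipschitz, so the upper bound $d(\alpha(t),\alpha(t'))\le |t-t'|$ is automatic. The only nontrivial case of the lower bound is when $\alpha(t)=p$ lies on $[x,\xi(0)]$ and $\alpha(t')=\xi(s)$ lies on the ray, so that $|t-t'|=d(p,\xi(0))+s$; there the required inequality $d(p,\xi(s))\ge d(p,\xi(0))+s-4\delta$ is precisely the Gromov-product bound $(p,\xi(s))_{\xi(0)}\le 2\delta$. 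I would prove this bound by observing first that $\xi(0)$ is also a nearest point of $\xi$ to $p$, since $d(p,\xi(s))\ge d(x,\xi(s))-d(x,p)\ge d(x,\xi(0))-d(x,p)=d(p,\xi(0))$, and then combining this minimality with the standard consequence of Lemma \ref{projection} that along $\xi|_{[0,s]}$ the distance from $p$ to $\xi(t)$ decreases at essentially unit speed for $t\le (p,\xi(s))_{\xi(0)}$: since $d(p,\xi(t))\ge d(p,\xi(0))$ for every $t\ge 0$, this forces $(p,\xi(s))_{\xi(0)}\le 2\delta$.

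With $\alpha$ recognized as a $(1,4\delta)$-quasigeodesic joining $x$ to $\xi(T)$, I would apply Proposition 2.7 of \cite{CavS20bis} to the geodesic $\gamma=[x,\xi(T)]$: a $(1,4\delta)$-quasigeodesic stays within fellow-traveling distance $72\delta$ of the geodesic with the same endpoints, which gives $d(\alpha(t),\gamma(t))\le 72\delta$. The remaining bookkeeping is only about parametrization: the length $d(x,\xi(0))+T$ of $\alpha$ exceeds $d(x,\xi(T))$ by at most $4\delta$ by the quasigeodesic inequality, so the arc-length parameters of $\alpha$ and $\gamma$ differ by at most $4\delta$, an error absorbed into $72\delta$ and accounting for the phrase ``for all possible $t$''.

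For (ii) I would pass to the limit $T\to+\infty$. The quasigeodesic estimate holds on the whole of $[0,+\infty)$ because the pointwise bound above is valid for every pair $t,t'\ge 0$. For the fellow-traveling, note $\xi(T)\to\xi^+$, so by properness and Arzel\`a--Ascoli (as in Lemma III.3.13 of \cite{BH09}) the geodesics $[x,\xi(T)]$ subconverge to a geodesic ray $[x,\xi^+]$; passing the uniform bound $72\delta$ from (i) to this limit yields $d(\alpha(t),\xi'(t))\le 72\delta$ for all $t\ge 0$, and since any two rays from $x$ to $\xi^+$ stay uniformly close the bound holds for every choice of $\xi'=[x,\xi^+]$. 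I expect the single genuine obstacle to be the Gromov-product estimate $(p,\xi(s))_{\xi(0)}\le 2\delta$ in the second paragraph, i.e.\ converting the minimizing property of the projection into the sharp constant $4\delta$; once that is in place, the stability step and the limit are routine.
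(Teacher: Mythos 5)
Your proposal is correct and follows the same route the paper intends: the paper derives this lemma as an immediate consequence of the Projection Lemma and Proposition 2.7 of \cite{CavS20bis}, i.e.\ exactly your two ingredients (projection minimality plus the $4$-point condition to get the $(1,4\delta)$-quasigeodesic estimate, then quasigeodesic stability for the $72\delta$ fellow-traveling), and your verification of $(p,\xi(s))_{\xi(0)}\le 2\delta$ is sound. The only caveat is cosmetic: in (ii) your limiting argument gives $72\delta$ for one limit ray and would a priori degrade the constant for an arbitrary $\xi'=[x,\xi^+]$ by the distance between parallel rays, whereas applying the stability result of \cite{CavS20bis} directly to the quasigeodesic ray avoids this.
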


Furthermore:
\begin{lemma}
	\label{approximation-line-ray}
	Let $X$ be a proper, $\delta$-hyperbolic metric space.
	Let $\gamma$ be a geodesic line and $x\in X$ with $S:=d(\gamma(0),x)$. Let $x'$ be a projection of $x$ on $\gamma$. Then
	\begin{itemize}
		\item[(i)] there exists an orientation of $\gamma$ such that $[x, x'] \cup [x', \gamma^+]$ is a $(1,4\delta)$-quasigeodesic, where the second segment is the subsegment of $\gamma$;
		\item[(ii)] with respect to the orientation of (i) then every geodesic ray $\xi=[x,\gamma^+]$ satisfies $d(\xi(S + t), \gamma(t)) \leq 76\delta$
		for all $t\geq 0$;
		\item[(iii)] for all orientations of $\gamma$ every geodesic ray $\xi = [x,\gamma^+]$ satisfies $d(\xi(S + t), \gamma(t)) \leq 2S + 76\delta$
		for all $t\geq 0$.
	\end{itemize}
\end{lemma}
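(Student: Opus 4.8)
The plan is to reduce everything to Lemma~\ref{projection-quasigeodesic}.(ii), applied to the positive sub-ray of $\gamma$ issuing from the projection point, and then to control the single remaining parameter shift by the quasigeodesic inequality itself. Write $x'=\gamma(t_0)$ and set $a:=d(x,x')$; since $x'$ realizes the distance from $x$ to the whole line, $a\le S$, and $x'$ is also a closest point of $x$ on the sub-ray $\xi^+:=\gamma|_{[t_0,+\infty)}$ (reparametrized to start at $x'$), whose endpoint at infinity is $\gamma^+$. Hence Lemma~\ref{projection-quasigeodesic}.(ii) applies to $\xi^+$: the concatenation $\alpha:=[x,x']\cup\xi^+$ is a $(1,4\delta)$-quasigeodesic, and every geodesic ray $\xi=[x,\gamma^+]$ satisfies $d(\alpha(s),\xi(s))\le 72\delta$ for all $s\ge 0$. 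This already yields (i) once I fix the orientation of $\gamma$ so that $\gamma(0)$ lies on the $\gamma^+$-side of $x'$, i.e. $t_0\le 0$; such an orientation always exists, since flipping the parametrization $t\mapsto -t$ sends $t_0\mapsto -t_0$ while leaving $\gamma(0)$ and $S$ unchanged.

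For (ii), parametrize $\alpha$ by arclength from $x$, so that $\alpha(a+u)=\gamma(t_0+u)$ for $u\ge 0$; in particular, since $a\le S$ and $t\ge 0$, $\alpha(S+t)=\gamma(t_0+S-a+t)$, whence $d(\alpha(S+t),\gamma(t))=|t_0+S-a|$. The key observation is that this shift is tiny in the chosen orientation: because $t_0\le 0$, the point $\gamma(0)$ equals $\alpha(a+|t_0|)$, and applying the $(1,4\delta)$-quasigeodesic inequality to the pair $x=\alpha(0)$ and $\gamma(0)=\alpha(a+|t_0|)$ gives $|S-(a+|t_0|)|\le 4\delta$, that is $|t_0+S-a|\le 4\delta$. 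Combining with the $72\delta$ fellow-traveling bound for $\xi$ yields $d(\xi(S+t),\gamma(t))\le 72\delta+4\delta=76\delta$, which is (ii).

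For (iii) I would run the same computation for an arbitrary orientation, so that $\alpha(S+t)=\gamma(t_0+S-a+t)$ and $d(\alpha(S+t),\gamma(t))=|t_0+S-a|$ still hold, now with $t_0$ of either sign. Here I bound the offset by the plain triangle inequality: since $|t_0|=d(\gamma(0),x')\le S+a\le 2S$ and $a\le S$, one gets $-2a\le t_0+S-a\le 2S$, hence $|t_0+S-a|\le 2S$. Adding the $72\delta$ bound gives $d(\xi(S+t),\gamma(t))\le 2S+72\delta\le 2S+76\delta$ for every orientation. The only genuinely delicate point is the sharp constant in (ii): it is tempting to estimate the offset $|t_0+S-a|$ through the Gromov product $(\gamma(0),x)_{x'}$ and the near-orthogonality of the projection, which works but produces a worse constant; the cleaner route — and the one I expect to give exactly $4\delta$ — is precisely to read the offset off the quasigeodesic inequality applied to the endpoints $x$ and $\gamma(0)$, as above.
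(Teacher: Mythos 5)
Your proof is correct and follows essentially the same route as the paper: both reduce to Lemma~\ref{projection-quasigeodesic}.(ii) applied to the concatenation $[x,x']\cup[x',\gamma^{+}]$, bound the parameter offset by $4\delta$ via the quasigeodesic inequality at $\gamma(0)$ for the good orientation, and by roughly $2S$ via the triangle inequality for the other. Your bookkeeping of the offset $|t_0+S-a|$ is just a more explicit version of the paper's observation that $\gamma(0)=\alpha(t_0)$ with $t_0\in[S,S+4\delta]$.
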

\begin{proof}
	We choose the orientation of $\gamma$ such that $x'$ belongs to the negative ray $\gamma\vert_{(-\infty,0]}$ and we take a geodesic ray $\xi = [x, \gamma^+]$. By Lemma \ref{projection-quasigeodesic} the path $\alpha = [x,x'] \cup [x', \gamma^+]$ is a $(1,4\delta)$-quasigeodesic and moreover it satisfies $d(\xi(S + t), \alpha(S + t))\leq 72\delta$ for every $t\geq 0$.
	Furthermore the time $t_0$ such that $\alpha(t_0)=\gamma(0)$ is between $S$ and $S+4\delta$ implying $d(\xi(S + t), \gamma(t))\leq 76\delta.$\\
	For the second part of the proof we suppose to be in the situation above and we consider a geodesic ray $\xi=[x,\gamma^-]$. By Lemma \ref{projection-quasigeodesic} the path $\alpha = [x,x'] \cup [x', \gamma^-]$, where the second segment is a subsegment of $\gamma$, satisfies $d(\xi(S + t), \alpha(S + t))\leq 72\delta$ for every $t\geq 0$. Furthermore for every $t\geq 0$ the point $\alpha(S+t)$ belongs to $\gamma$ and $d(\alpha(S+t), \gamma(0)) \leq d(\alpha(S+t),x) + d(x, \gamma(0)) \leq 2S + t + 4\delta$.
	So $d(\xi(S+t), \gamma(t)) \leq 76\delta + 2S.$
\end{proof}
We remark that if $\gamma(0)$ is a projection of $x$ on $\gamma$ then the first part of the lemma holds for both the positive and negative ray of $\gamma$.\\
The {\em quasiconvex hull} of a subset $C$ of $\partial X$ is the union of all the geodesic lines joining two points of $C$ and it is denoted by QC-Hull$(C)$. The following is essentially Lemma 2.5 of \cite{Cav21ter}.

\begin{lemma}
	\label{approximation-ray-line}
	Let $X$ be a proper, $\delta$-hyperbolic metric space.
	Let $x\in X$ and $C\subseteq \partial X$ be a subset with at least two points. Then for every $z\in C$ it exists a geodesic line $\gamma$ with endpoints in $C$ such that $d(\xi_{xz}(t), \gamma(t))\leq 22\delta + d(x,\textup{QC-Hull}(C)) =: L$ for every $t\geq 0$.
\end{lemma}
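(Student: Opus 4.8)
The plan is to build the second endpoint of $\gamma$ by hand. I will keep $z\in C$ as the positive endpoint $\gamma^+$ and select a point $w\in C$ so that the bi-infinite geodesic $\gamma=[w,z]$, parametrized so that $\gamma(0)$ is a nearest-point projection of $x$ onto it, fellow-travels the ray $\xi_{xz}=[x,z]$. Since $\gamma\subseteq\textup{QC-Hull}(C)$ its distance from $x$ is at least $D:=d(x,\textup{QC-Hull}(C))$, so the entire point of the construction is to choose $w$ making this distance at most $D+O(\delta)$ and then to convert that offset into a uniform-in-$t$ estimate. Throughout I may assume $\delta>0$, since a $\delta$-hyperbolic space is $\delta'$-hyperbolic for every $\delta'\geq\delta$; this gives a strictly positive slack in the constant $22\delta$ that will be needed at the very end.

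\textbf{Locating $w$.} Fix $\varepsilon>0$ and choose $p\in\textup{QC-Hull}(C)$ with $d(x,p)\leq D+\varepsilon$; by definition $p$ lies on a geodesic line $\eta$ with endpoints $w_1,w_2\in C$. Because $p$ is an interior point of $[w_1,w_2]$, property \eqref{product-boundary-property} gives $(w_1,w_2)_p\leq\delta$, so the boundary four-point inequality \eqref{hyperbolicity-boundary} forces $\min\{(w_1,z)_p,(w_2,z)_p\}\leq 2\delta$; say $(w,z)_p\leq 2\delta$ for some $w\in\{w_1,w_2\}$. The Projection Lemma (Lemma \ref{projection}) then yields $d(p,[w,z])\leq(w,z)_p+4\delta\leq 6\delta$, hence $d(x,\gamma)\leq d(x,p)+d(p,\gamma)\leq D+\varepsilon+6\delta$ for $\gamma:=[w,z]$.

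\textbf{Fellow-traveling to $z$ and bookkeeping.} Let $x'$ be a projection of $x$ on $\gamma$, and reparametrize so that $\gamma(0)=x'$ and $\gamma^+=z$; thus $S:=d(x,x')=d(x,\gamma)\leq D+\varepsilon+6\delta$ and $\gamma\vert_{[0,\infty)}=[x',z]$. Both $\xi_{xz}$ and $\gamma\vert_{[0,\infty)}$ are geodesic rays asymptotic to $z$ issuing from the $S$-close basepoints $x$ and $x'$. By Lemma \ref{projection-quasigeodesic}.(ii) the concatenation $[x,x']\cup\gamma\vert_{[0,\infty)}$ is a $(1,4\delta)$-quasigeodesic from $x$ to $z$, so by stability of quasigeodesics it stays $O(\delta)$-close to $\xi_{xz}$; comparing at equal parameters (the quasigeodesic is shifted by $S$ relative to $\xi_{xz}$) gives $d(\xi_{xz}(t),\gamma(t))\leq S+O(\delta)$ for all $t\geq 0$, the maximum being attained near $t=0$ where it equals $d(x,x')=S$ (exactly as in the tree model). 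Combining, $d(\xi_{xz}(t),\gamma(t))\leq D+\varepsilon+c_0\delta$ for an explicit $c_0$. Once the sharp accounting brings $c_0$ strictly below $22$, I set $\varepsilon:=(22-c_0)\delta>0$, producing a single geodesic line $\gamma=[w,z]$ with both endpoints in $C$ and satisfying $d(\xi_{xz}(t),\gamma(t))\leq 22\delta+D$.

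\textbf{Main obstacle.} The difficulty is arithmetic-geometric rather than conceptual, and twofold. First, the generic tools carry constants ($72\delta$ in Lemma \ref{projection-quasigeodesic}, $76\delta$ in Lemma \ref{approximation-line-ray}) far exceeding $22\delta$, so the tracking estimate in the fellow-traveling step must be re-derived directly from thinness of triangles with careful constant-chasing rather than quoted wholesale. Second, since neither $C$ nor $\textup{QC-Hull}(C)$ need be closed, the infimum $D$ may not be attained, and a naive Arzel\`a--Ascoli limit of the lines $[w_\varepsilon,z]$ could push the second endpoint into $\overline{C}\setminus C$; the remedy is precisely to retain $\varepsilon>0$ and absorb it into the slack of the target constant $22\delta$, which is legitimate because we arranged $\delta>0$. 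The underlying geometry—thinness of ideal triangles and convergence of asymptotic rays—is standard.
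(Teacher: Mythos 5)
Your construction is self-contained and genuinely different from the paper's argument: the paper locates a point $x'$ realizing $d(x,\mathrm{QC\text{-}Hull}(C))$, invokes Lemma~2.5 of \cite{Cav21ter} to produce a line with endpoints in $C$ that $14\delta$-fellow-travels $\xi_{x'z}$ at equal parameters, and then pays $8\delta$ plus $d(x,x')$ to pass from $\xi_{x'z}$ to $\xi_{xz}$, which is exactly where $22\delta+D$ comes from. Your first step (using $(w_1,w_2)_p\leq\delta$ and the four-point inequality to find $w\in\{w_1,w_2\}$ with $(w,z)_p\leq 2\delta$, hence $d(p,[w,z])\leq 6\delta$) is a nice and essentially correct way to locate a candidate line, modulo the fact that Lemma~\ref{projection} is stated only for points of $X$ and its extension to a line with two ideal endpoints needs an approximation argument that may cost a further additive constant, and modulo an extra $8\delta$ if the particular geodesic line $[w,z]$ you end up with is not the one passing near $p$.

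The genuine gap is in the fellow-traveling step, which is the quantitative heart of the lemma and is exactly what you do not carry out. You correctly observe that the tool you quote, Lemma~\ref{projection-quasigeodesic}.(ii), only yields $d(\alpha(t),\xi'(t))\leq 72\delta$, so that the wholesale estimate gives $d(\xi_{xz}(t),\gamma(t))\leq S+72\delta+8\delta$ with $S\leq D+\varepsilon+6\delta$ --- far above $D+22\delta$. Your remedy is the sentence ``Once the sharp accounting brings $c_0$ strictly below $22$, I set $\varepsilon:=(22-c_0)\delta$,'' but that accounting is never performed, and it is not automatic that it can be: your construction already spends $6\delta$ (plus the unquantified ideal-endpoint correction to the Projection Lemma) before the tracking estimate begins, so the equal-parameter comparison of $\xi_{xz}$ with $[x,x']\cup\gamma\vert_{[0,\infty)}$ must be shown to cost at most roughly $16\delta$ more, uniformly in $t$. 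Asserting that this follows ``from thinness of triangles with careful constant-chasing'' is a plan, not a proof; the statement being proved \emph{is} the value of the constant, so deferring it leaves the lemma unestablished. The paper avoids this entirely by outsourcing the $14\delta$ tracking bound to Lemma~2.5 of \cite{Cav21ter}; to make your route work you would either have to reproduce that computation explicitly or quote it, at which point your detour through $p$ and the Gromov products becomes unnecessary.
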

\begin{proof}
	Let $x'\in \text{QC-Hull}(C)$ realizing $d(x, \text{QC-Hull}(C))$. By Lemma 2.5 of \cite{Cav21ter} we know there exists $\gamma$ as desired such that $d(\xi_{x'z}(t),\gamma(t))\leq 14\delta$ for every $t\geq 0$. Now the thesis follows by the fact that two geodesic rays with same endpoint are $8\delta$-close, see for instance Proposition 8.10 of \cite{BCGS}.
\end{proof}
\begin{obs}
	\label{C-subset-C'}
	Let $z,z'\in C\subseteq \partial X$. It is clear that the conclusion of Lemma \ref{approximation-ray-line} is true with $L = 22\delta + d(x,[z,z'])$.
\end{obs}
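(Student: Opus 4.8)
The plan is to run the proof of Lemma \ref{approximation-ray-line} essentially unchanged, but with the nearest-point basepoint taken on the single geodesic line $[z,z']$ rather than on the whole quasiconvex hull. The key observation is that, since $z,z'\in C$, the geodesic line $[z,z']$ joins two points of $C$ and is therefore one of the lines defining $\textup{QC-Hull}(C)$; in particular $[z,z']\subseteq\textup{QC-Hull}(C)$, so it is a legitimate choice for the geodesic line $\gamma$ appearing in the conclusion of Lemma \ref{approximation-ray-line}, and $d(x,[z,z'])\geq d(x,\textup{QC-Hull}(C))$. Concretely, I would fix $\gamma=[z,z']$, oriented so that $\gamma^+=z$, let $x'$ be a projection of $x$ onto $\gamma$ so that $d(x,x')=d(x,[z,z'])=:S$, and parametrize $\gamma$ so that $\gamma(0)=x'$.

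The simplification compared with the original argument is that now the intermediate basepoint $x'$ lies \emph{on} $\gamma$ itself. Hence the positive subray $\gamma|_{[0,+\infty)}$ is already a geodesic ray from $x'$ to $z$, and we may simply choose $\xi_{x'z}$ to be this subray, so that $d(\xi_{x'z}(t),\gamma(t))=0$ for all $t\geq 0$; the $14\delta$ term coming from Lemma 2.5 of \cite{Cav21ter} in the proof of Lemma \ref{approximation-ray-line} is no longer needed. It then only remains to pass from $x'$ to $x$, exactly as in that proof: the rays $\xi_{xz}$ and $\gamma|_{[0,+\infty)}$ have the common endpoint $z$ and initial distance $d(x,x')=S$, so by Proposition 8.10 of \cite{BCGS} (two geodesic rays with the same endpoint are $8\delta$-close) we obtain $d(\xi_{xz}(t),\gamma(t))\leq 8\delta+S=8\delta+d(x,[z,z'])$ for every $t\geq 0$. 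Since $8\delta\leq 22\delta$, this yields the asserted bound $d(\xi_{xz}(t),\gamma(t))\leq 22\delta+d(x,[z,z'])=L$ (in fact with the better constant $8\delta$, but I keep $22\delta$ to match the statement of Lemma \ref{approximation-ray-line}).

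The only point requiring a little care is the last step, i.e. the precise form of the common-endpoint fellow-traveling used: one must invoke it in the version that compares the two rays at the \emph{same} parameter $t$, with the excess controlled by the initial distance $d(x,x')$ rather than by a reparametrization shift, so that the bound holds uniformly for all $t\geq 0$ including small $t$. This is exactly the way Proposition 8.10 of \cite{BCGS} is already applied in the proof of Lemma \ref{approximation-ray-line}, so no new input is needed; everything else is immediate, which is why the statement is recorded as a remark.
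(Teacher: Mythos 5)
Your argument proves less than the remark asserts. The ``conclusion of Lemma \ref{approximation-ray-line}'' is the statement that \emph{for every} point $w\in C$ there is a geodesic line with endpoints in $C$ which, suitably parametrized, stays within $L$ of $\xi_{xw}$ for all $t\geq 0$; the two points $z,z'$ fixed in the remark serve only to produce the new value of the constant, $L=22\delta+d(x,[z,z'])$. This is also how the remark is used later: in the proof of Theorem \ref{lipschitz-entropy} the lemma is applied to the endpoint $\gamma^+$ of an \emph{arbitrary} line $\gamma\in\textup{Geod}(C')$, and the whole point is that the constant is controlled by the distance from $x$ to one chosen line $[z,z']$ rather than by $d(x,\textup{QC-Hull}(C'))$. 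Your proof, however, only constructs the approximating line for the boundary point $w=z$ itself (and, symmetrically, $w=z'$): you take $\gamma=[z,z']$, observe that its positive subray from the projection $x'$ is already a ray to $z$, and discard the step involving Lemma 2.5 of \cite{Cav21ter} as ``no longer needed''. That step is precisely what produces, for a general $w\in C$, a line with endpoints in $C$ approximating $\xi_{x'w}$, and it cannot be dropped. The fact that you land on $8\delta$ rather than $22\delta=14\delta+8\delta$ is the symptom: the missing $14\delta$ is the cost of handling arbitrary $w$.

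The intended argument is the one you announce in your first sentence and then abandon: repeat the proof of Lemma \ref{approximation-ray-line} verbatim, taking $x'$ to be a projection of $x$ onto $[z,z']$ instead of a closest point of the hull. Since $[z,z']$ is a geodesic line joining two points of $C$, it is contained in $\textup{QC-Hull}(C)$, so $x'\in\textup{QC-Hull}(C)$ and Lemma 2.5 of \cite{Cav21ter} applies at $x'$: for every $w\in C$ it yields a line $\gamma_w$ with endpoints in $C$ and $d(\xi_{x'w}(t),\gamma_w(t))\leq 14\delta$ for all $t\geq 0$, and the $8\delta$-closeness of asymptotic rays then gives $d(\xi_{xw}(t),\gamma_w(t))\leq 14\delta+8\delta+d(x,x')=22\delta+d(x,[z,z'])$. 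The special case $w=z$ that you do prove is correct (indeed with the sharper constant $8\delta+d(x,[z,z'])$), but it is not the statement the paper needs.
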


\begin{lemma}
	\label{bicombing-qchull}
	Let $(X,\sigma)$ be a proper, $\delta$-hyperbolic \textup{GCB}-space. Let \textup{QC-Hull}$_\sigma(C)$ be the union of all $\sigma$-geodesic lines joining two points of $C$. Then for every $x\in \textup{QC-Hull}(C)$ there exists $x' \in \textup{QC-Hull}_\sigma(C)$ with $d(x,x')\leq 8\delta$.
\end{lemma}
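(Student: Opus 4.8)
The plan is to realize $x$ on an ordinary geodesic line whose endpoints lie in $C$, and then to replace that line by a $\sigma$-geodesic line with the \emph{same} pair of endpoints, which by construction belongs to $\textup{QC-Hull}_\sigma(C)$; the two lines, sharing both endpoints at infinity, will be forced $8\delta$-close by hyperbolicity. Concretely, since $x\in\textup{QC-Hull}(C)$ there is a geodesic line $\gamma$ with $\gamma^\pm\in C$ and, after reparametrizing, $\gamma(0)=x$; write $z^\pm:=\gamma^\pm$. For each $n$ the bicombing provides the $\sigma$-geodesic segment $[\gamma(n),\gamma(-n)]$, which has the same endpoints as the geodesic segment $\gamma|_{[-n,n]}$. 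The thin-bigon property of $\delta$-hyperbolic spaces, the segment analogue of the fact used in the proof of Lemma~\ref{approximation-ray-line} that two geodesic rays with a common endpoint at infinity are $8\delta$-close, shows these two segments fellow-travel within $8\delta$, so for $n$ large the $\sigma$-segment contains a point $w_n$ with $d(x,w_n)\le 8\delta$ lying well inside its interior.

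Next I would produce an actual $\sigma$-geodesic line. By geodesic completeness each $\sigma$-segment $[\gamma(n),\gamma(-n)]$ extends to a $\sigma$-geodesic line $\gamma_n$; reparametrize so that $\gamma_n(0)=w_n$. Since $X$ is proper (Proposition~\ref{packingsmallscales}) the points $w_n$ lie in the compact ball $\overline{B}(x,8\delta)$ and the lines $\gamma_n$ subconverge, uniformly on compact sets, to a geodesic line $\gamma_\sigma$; because $\textup{Geod}_\sigma(X)$ is closed in $\textup{Geod}(X)$, the limit $\gamma_\sigma$ is again a $\sigma$-geodesic line, and $\gamma_\sigma(0)=x':=\lim_n w_n$ satisfies $d(x,x')\le 8\delta$. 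Finally one checks that $\gamma_\sigma^\pm=z^\pm$: on every compact time interval the segments $[\gamma(n),\gamma(-n)]$ track $\gamma$ within $8\delta$, so $\gamma_\sigma$ fellow-travels $\gamma$ and hence has the same endpoints at infinity. Thus $\gamma_\sigma$ joins $z^+,z^-\in C$, giving $\gamma_\sigma\subseteq\textup{QC-Hull}_\sigma(C)$ and $x'\in\textup{QC-Hull}_\sigma(C)$ with $d(x,x')\le 8\delta$.

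The routine ingredients here are the thin-bigon estimate and the compactness package (properness together with the closedness of $\textup{Geod}_\sigma(X)$, plus geodesic completeness to pass from segments to lines). The one step that requires genuine care, and the place where $\delta$-hyperbolicity is indispensable, is the endpoint control in the limit: a priori the limiting $\sigma$-geodesic line could run off to a different pair of boundary points, and it is exactly the stability (visibility) of geodesics in $\delta$-hyperbolic spaces, manifested in the uniform $8\delta$ fellow-traveling of $[\gamma(n),\gamma(-n)]$ with $\gamma|_{[-n,n]}$, that pins down $\gamma_\sigma^\pm=z^\pm$. Note also that the constant $8\delta$ in the statement is not incidental: it is precisely the shared-endpoint fellow-traveling constant already invoked in Lemma~\ref{approximation-ray-line}, so no finer estimate needs to be tracked.
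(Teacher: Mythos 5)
Your argument is correct and follows essentially the same route as the paper: replace the ordinary geodesic line through $x$ by a $\sigma$-geodesic line with the same endpoints at infinity and invoke the $8\delta$ fellow-traveling of parallel geodesics. The only difference is that you reprove, via thin bigons, extension of $\sigma$-segments and an Arzel\`a--Ascoli limit (using properness and the closedness of $\textup{Geod}_\sigma(X)$), the existence of a $\sigma$-geodesic line joining two given boundary points, which the paper simply cites from \cite{CavS20bis}.
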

\begin{proof}
	For all $z,z'\in \partial X$ there exists a $\sigma$-geodesic line joining them, see \cite{CavS20bis}. Moreover two parallel geodesics are at most at distance $8\delta$ (\cite{BCGS}, Proposition 8.10), hence the conclusion.
\end{proof}

\subsection{Minkowski dimension}
\label{subsec-minkowski}
When $X$ is a proper, $\delta$-hyperbolic metric space we define the {\em generalized visual ball} of center $z \in \partial X$ and radius $\rho \geq 0$ to be
$$B(z,\rho) = \bigg\lbrace z' \in \partial X \text{ s.t. } (z,z')_{x} > \log \frac{1}{\rho} \bigg\rbrace.$$
It is comparable to the metric balls of the visual metrics on $\partial X$, see Lemma 2.6 of \cite{Cav21ter}.
Generalized visual balls are related to shadows. Let $x\in X$ be a basepoint. The shadow of radius $r>0$ casted by a point $y\in X$ with center $x$ is the set:
$$\text{Shad}_x(y,r) = \lbrace z\in \partial X \text{ s.t. } [x,z]\cap B(y,r) \neq \emptyset \text{ for all rays } [x,z]\rbrace.$$
\begin{lemma}[Lemma 2.7 of \cite{Cav21ter}]
	\label{shadow-ball}
	Let $X$ be a proper, $\delta$-hyperbolic metric space. Let $z\in \partial X$, $x\in X$ and $T\geq 0$. Then 
	\begin{itemize}
		\item[(i)] $B(z,e^{-T}) \subseteq \textup{Shad}_{x}\left(\xi_{xz}\left(T\right), 7\delta\right)$;
		\item[(ii)] $\textup{Shad}_{x}\left(\xi_{xz}\left(T\right), r\right) \subseteq B(z, e^{-T + r})$ for all $r> 0$.
	\end{itemize}
\end{lemma}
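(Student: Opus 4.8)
The plan is to convert both inclusions into concrete statements about where the two rays $\xi_{xz}$ and $\xi'=[x,z']$ lie at parameter $T$, using three elementary ingredients: the identity $(\xi_{xz}(T),\xi_{xz}(n))_x=\min(T,n)$ for points on a geodesic ray, the passage-to-the-limit inequality \eqref{product-boundary-property}, and the hyperbolic inequality \eqref{hyperbolicity}/\eqref{hyperbolicity-boundary} (in its mixed form, valid for one interior and two boundary points, obtained from \eqref{hyperbolicity} by the usual limiting argument).

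For (i) I would start from $z'\in B(z,e^{-T})$, that is $(z,z')_x>T$, and fix an arbitrary ray $\xi'=[x,z']$; since the shadow condition must hold for every such ray, it suffices to show each one meets $B(\xi_{xz}(T),7\delta)$. First, $(\xi_{xz}(T),z)_x\geq T$, because $\xi_{xz}(n)\to z$ and $(\xi_{xz}(T),\xi_{xz}(n))_x=\min(T,n)=T$ for $n\geq T$, so \eqref{product-boundary-property} gives the bound. Applying the hyperbolic inequality to $\xi_{xz}(T),z,z'$ yields $(\xi_{xz}(T),z')_x\geq\min\{(\xi_{xz}(T),z)_x,(z,z')_x\}-O(\delta)\geq T-O(\delta)$. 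Feeding this back through \eqref{product-boundary-property} and one further application of \eqref{hyperbolicity} to $\xi_{xz}(T),\xi'(T),\xi'(n)$ (using $(\xi'(T),\xi'(n))_x=T$) bounds $(\xi_{xz}(T),\xi'(T))_x$ from below by $T-O(\delta)$. Since $d(x,\xi_{xz}(T))=d(x,\xi'(T))=T$, we have $d(\xi_{xz}(T),\xi'(T))=2\big(T-(\xi_{xz}(T),\xi'(T))_x\big)$, and tracking the constants forces $d(\xi_{xz}(T),\xi'(T))\leq 7\delta$; thus every ray $\xi'$ passes through $B(\xi_{xz}(T),7\delta)$, as required.

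For (ii) I would take $z'\in\text{Shad}_x(\xi_{xz}(T),r)$ and a ray $\xi'=[x,z']$, which by hypothesis meets $B(\xi_{xz}(T),r)$ at some $\xi'(s)$ with $d(\xi'(s),\xi_{xz}(T))<r$. The triangle inequality gives $|s-T|<r$, hence $s>T-r$, and a direct computation of the Gromov product yields $(\xi_{xz}(T),\xi'(s))_x=\tfrac12\big(T+s-d(\xi_{xz}(T),\xi'(s))\big)>T-r$. To pass to the boundary I would estimate $(\xi_{xz}(n),\xi'(m))_x$ for large $n,m$ by chaining \eqref{hyperbolicity} through the two comparison points $\xi_{xz}(T)$ and $\xi'(s)$ (using $(\xi_{xz}(T),\xi_{xz}(n))_x=T$ and $(\xi'(s),\xi'(m))_x=s$), and then take the liminf and the supremum over representative sequences to conclude $(z,z')_x>T-r$, i.e.\ $z'\in B(z,e^{-T+r})$.

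The main obstacle is the bookkeeping of the additive $\delta$-errors: all the quantitative content lies in converting the coarse, infinitary definition of the boundary Gromov product (a supremum of liminfs over representative sequences) into a clean inequality at the finite scale $T$, and the only delicate point is keeping the errors accumulated from the repeated use of \eqref{hyperbolicity} and of \eqref{product-boundary-property} within the advertised constants $7\delta$ and $e^{-T+r}$. Note that only properness (to guarantee the rays exist) and $\delta$-hyperbolicity are used here; the GCB and packing hypotheses play no role in this lemma.
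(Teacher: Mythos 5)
The paper does not prove this lemma; it is imported verbatim as Lemma 2.7 of \cite{Cav21ter}, so there is no in-paper argument to compare against. Judged on its own terms, your sketch of (i) is essentially workable: two applications of the four-point condition plus one use of \eqref{product-boundary-property} give $(\xi_{xz}(T),\xi'(T))_x\geq T-3\delta$, hence $d(\xi_{xz}(T),\xi'(T))\leq 6\delta\leq 7\delta$. (One caveat: the paper only states \eqref{hyperbolicity} for interior points and \eqref{hyperbolicity-boundary} for three boundary points; the mixed version you invoke can degrade to $-2\delta$ per application under the sup-liminf definition of the boundary product, which would overshoot $7\delta$. This is avoidable by running the whole argument on the sequences $\xi_{xz}(n)$, $\xi'(m)$ and using only the finite four-point condition, which lands at $6\delta$.)

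Part (ii), however, has a genuine gap. The target inclusion $\textup{Shad}_x(\xi_{xz}(T),r)\subseteq B(z,e^{-T+r})$ means exactly $(z,z')_x>T-r$, with no room for additive $\delta$-errors. Your plan chains \eqref{hyperbolicity} through the two comparison points $\xi_{xz}(T)$ and $\xi'(s)$, and each such application costs $\delta$: the chain yields only $(\xi_{xz}(n),\xi'(m))_x\geq T-r-2\delta$ and hence $(z,z')_x\geq T-r-2\delta$, i.e.\ membership in $B(z,e^{-T+r+2\delta})$, not in $B(z,e^{-T+r})$. Taking the supremum over representative sequences at the end does not recover the loss, since your lower bound applies only to the one pair of sequences you control. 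The correct route for (ii) uses no hyperbolicity at all: from $d(\xi'(s),\xi_{xz}(T))<r$ and $|s-T|<r$, the triangle inequality gives
\begin{equation*}
d(\xi_{xz}(n),\xi'(m))\leq (n-T)+d(\xi_{xz}(T),\xi'(s))+(m-s)<(n-T)+r+(m-s),
\end{equation*}
so $(\xi_{xz}(n),\xi'(m))_x>\tfrac12(T+s-r)>T-r$ uniformly in $n,m$; since $(z,z')_x$ is by definition the supremum over representative sequences of the liminf, it is at least $\tfrac12(T+s-r)>T-r$, which is the claim. So the decomposition point ($\xi'(s)$ inside the ball) is right, but the passage to infinity must be done by a direct distance estimate rather than by the four-point condition.
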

The {\em upper} and {\em lower visual Minkowski dimension} of a subset $C$ of $\partial X$ was defined in \cite{Cav21ter} as
$$\overline{\text{MD}}(C) = \limsup_{T  \to +\infty} \frac{1}{T}\log \text{Cov}(C, e^{-T}), \qquad \underline{\text{MD}}(C) = \liminf_{T  \to +\infty} \frac{1}{T}\log \text{Cov}(C, e^{-T})$$
respectively, where $\text{Cov}(C, \rho)$ denotes the minimal number of generalized visual balls of radius $\rho$ needed to cover $C$. Taking $C=\partial X$ we get
\begin{prop}
	\label{ent+mink}
	Let $(X,\sigma)$ be a $\delta$-hyperbolic \textup{GCB}-space that is $P_0$-packed at scale $r_0$ and let $x\in X$. Then 
	$$\frac{1}{T}\log \textup{Cov}(\partial X,e^{-T}) \underset{P_0,r_0,\delta}{\asymp} \frac{1}{T}\log \textup{Cov}(S(x, T),r_0).$$
	In particular the upper (resp. lower) visual Minkowski dimension of $\partial X$ equals the upper (resp. lower) covering entropy of $X$.
\end{prop}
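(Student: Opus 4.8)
The plan is to compare a covering of $\partial X$ by generalized visual balls with a covering of the sphere $S(x,T)$ by metric $r_0$-balls, using the dictionary between visual balls and shadows provided by Lemma \ref{shadow-ball}. The two inequalities I want to extract are, for $c := r_0 + 9\delta$,
\begin{equation*}
	\textup{Cov}(S(x,T),14\delta) \leq \textup{Cov}(\partial X, e^{-T}) \leq \textup{Cov}(S(x,T+c),r_0).
\end{equation*}
Once these are in hand, Proposition \ref{spherical-entropy} and Proposition \ref{cov-ent-pack} absorb the changes of scale, and an elementary estimate neutralises the shift $T\mapsto T+c$, yielding the asymptotic equivalence; the ``in particular'' statement then follows by passing to $\limsup$ and $\liminf$.

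For the left inequality I would start from a family $z_1,\dots,z_M$ realizing $\textup{Cov}(\partial X, e^{-T})$ and set $w_j = \xi_{xz_j}(T)\in S(x,T)$. By Lemma \ref{shadow-ball}.(i) the shadows $\textup{Shad}_x(w_j,7\delta)$ cover $\partial X$, and I claim the $w_j$ are $14\delta$-dense in $S(x,T)$. Given $y\in S(x,T)$, I extend the $\sigma$-geodesic $[x,y]$ to a $\sigma$-ray $\xi$ (using geodesic completeness of $\sigma$), with endpoint $z=\xi^+$ and $\xi(T)=y$; since $z$ lies in some $\textup{Shad}_x(w_j,7\delta)$, the ray $\xi$ meets $B(w_j,7\delta)$ at a time $s$ with $|s-T|< 7\delta$, whence $d(y,w_j)<14\delta$. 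This gives $\textup{Cov}(S(x,T),14\delta)\leq M$; dividing by $T$ and converting the scale $14\delta$ into $r_0$ through Propositions \ref{spherical-entropy} and \ref{cov-ent-pack} shows that $\frac1T\log\textup{Cov}(\partial X,e^{-T})$ is asymptotically at least $\frac1T\log\textup{Cov}(S(x,T),r_0)$.

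For the right inequality I take $w_1,\dots,w_N$ an $r_0$-dense subset of $S(x,T)$ and, extending each $[x,w_i]$ to a $\sigma$-ray $\xi_i$ with endpoint $z_i$ and $\xi_i(T)=w_i$, I aim to show the balls $B(z_i, e^{-(T-c)})$ cover $\partial X$. For $z\in\partial X$ there is $w_i$ with $d(\xi_{xz}(T),w_i)\leq r_0$; the subtle point is that membership in a shadow requires \emph{every} ray from $x$ to $z$ to meet the relevant ball, so I use that any two rays with common endpoint $z$ are $8\delta$-close (as in the proof of Lemma \ref{approximation-ray-line}) to conclude that $z\in\textup{Shad}_x(w_i, r_0+8\delta+\varepsilon)$ for every $\varepsilon>0$. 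Lemma \ref{shadow-ball}.(ii) applied to the ray $\xi_i$ then gives $z\in B(z_i, e^{-T+r_0+8\delta+\varepsilon})$, and letting $\varepsilon\to 0$ the extra $\delta$ in $c$ turns this into $z\in B(z_i, e^{-(T-c)})$. Hence $\textup{Cov}(\partial X, e^{-(T-c)})\leq N$, which after relabelling is $\textup{Cov}(\partial X, e^{-T})\leq\textup{Cov}(S(x,T+c),r_0)$.

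It remains to neutralise the radius shift on the right. For this I would prove the crude bound $\textup{Cov}(S(x,T+c),r_0)\leq \kappa\,\textup{Cov}(S(x,T),r_0)$ with $\kappa=\textup{Cov}(r_0+2c,r_0)$ a constant depending only on $P_0,r_0,\delta$: projecting an $r_0$-net of $S(x,T)$ outward along the $\sigma$-geodesics from $x$ produces an $(r_0+2c)$-net of $S(x,T+c)$ by the triangle inequality, and refining it to scale $r_0$ costs the uniform factor $\kappa$ guaranteed by Proposition \ref{packingsmallscales}. Since $\frac1T\log\kappa\to0$, the shift disappears after dividing by $T$, and combined with the monotonicity $\textup{Cov}(S(x,T),r_0)\le\textup{Cov}(S(x,T+c),r_0)$ it yields $\frac1T\log\textup{Cov}(S(x,T+c),r_0)\asymp\frac1T\log\textup{Cov}(S(x,T),r_0)$. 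The main obstacle is really the two geometric claims above; the delicate one is the right inequality, where the ``for all rays'' clause in the definition of the shadow must be controlled uniformly, which is exactly what the $8\delta$-fellow-travelling of rays to a common endpoint provides.
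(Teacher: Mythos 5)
Your proposal is correct and follows essentially the same route as the paper: both directions are obtained by radially matching an $e^{-T}$-net of $\partial X$ with an $r_0$-net of $S(x,T)$ along (extensions of) $\sigma$-geodesics from $x$, and then absorbing the changes of scale and the shift $T\mapsto T+c$ via Propositions \ref{cov-ent-pack} and \ref{spherical-entropy}. The only difference is that you translate between sphere points and boundary points through shadows (Lemma \ref{shadow-ball}), handling the ``for all rays'' quantifier with the $8\delta$-fellow-travelling of asymptotic rays, whereas the paper applies Lemma \ref{product-rays} directly to the Gromov products, which yields the same conclusion with slightly sharper constants.
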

We need:
\begin{lemma}[\cite{Cav21ter}, Lemma 2.2]
	\label{product-rays}
	Let $X$ be a proper, $\delta$-hyperbolic metric space, $z,z'\in \partial X$ and $x\in X$. Then
	\begin{itemize}
		\item[(i)] if $(z,z')_{x} \geq T$ then $d(\xi_{xz}(T - \delta),\xi_{xz'}(T - \delta)) \leq 4\delta$;
		\item[(ii)] for all $b> 0$, if $d(\xi_{xz}(T),\xi_{xz'}(T)) < 2b$ then $(z,z')_{x} > T - b$.
	\end{itemize}
\end{lemma}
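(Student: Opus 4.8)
The plan is to reduce both statements to a single monotone function and one finite triangle estimate. Set $g(s) = (\xi_{xz}(s),\xi_{xz'}(s))_x$, and observe that since $d(x,\xi_{xz}(s)) = d(x,\xi_{xz'}(s)) = s$ we have $g(s) = s - \tfrac12 d(\xi_{xz}(s),\xi_{xz'}(s))$. The first thing I would record is an elementary monotonicity: for a fixed $w\in X$ the map $s\mapsto (\xi_{xz}(s),w)_x = \tfrac12\big(s + d(x,w) - d(\xi_{xz}(s),w)\big)$ is non-decreasing, because increasing $s$ by $\Delta$ increases $d(\xi_{xz}(s),w)$ by at most $\Delta$. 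Applying this in each variable separately shows that $(n,m)\mapsto (\xi_{xz}(n),\xi_{xz'}(m))_x$ is non-decreasing in each entry, so its diagonal $g(n)$ is non-decreasing and, being bounded above (the product $(z,z')_x$ is finite since $z\neq z'$), converges to $L := \sup_s g(s)$. Non-decreasingness in each entry also gives $\sup_{n,m}(\xi_{xz}(n),\xi_{xz'}(m))_x = \sup_n g(n) = L$, hence the double sequence converges to $L$. Feeding the sequences $\xi_{xz}(n)\to z$, $\xi_{xz'}(m)\to z'$ into \eqref{product-boundary-property} then yields the two-sided comparison
$$(z,z')_x - \delta \leq L \leq (z,z')_x.$$

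The second ingredient is a finite-segment estimate: for geodesics $[x,y]$ and $[x,w]$ and any $t\leq (y,w)_x$ one has $d\big([x,y](t),[x,w](t)\big)\leq 4\delta$. I would prove this directly from the four-point condition \eqref{hyperbolicity}. Writing $p=[x,y](t)$ and $q=[x,w](t)$, the points lie on the respective geodesics, so $(p,y)_x = (w,q)_x = t$, while $(y,w)_x\geq t$ by hypothesis. Two applications of \eqref{hyperbolicity} give $(p,q)_x \geq \min\{(p,y)_x,(y,w)_x,(w,q)_x\} - 2\delta \geq t - 2\delta$, and since $d(p,q) = 2t - 2(p,q)_x$ this produces $d(p,q)\leq 4\delta$, fixing the constant exactly.

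With these in hand both assertions are short. For (i), the hypothesis $(z,z')_x\geq T$ together with the comparison gives $L\geq T-\delta$. Applying the finite estimate to the restrictions $\xi_{xz}|_{[0,n]}=[x,\xi_{xz}(n)]$ and $\xi_{xz'}|_{[0,n]}=[x,\xi_{xz'}(n)]$ shows $d(\xi_{xz}(t),\xi_{xz'}(t))\leq 4\delta$ whenever $t\leq g(n)$; letting $n\to\infty$ and using continuity of $s\mapsto d(\xi_{xz}(s),\xi_{xz'}(s))$ extends this to every $t\leq L$, in particular to $t=T-\delta$. For (ii), the hypothesis $d(\xi_{xz}(T),\xi_{xz'}(T))<2b$ reads $g(T) = T - \tfrac12 d(\xi_{xz}(T),\xi_{xz'}(T)) > T - b$; monotonicity gives $L\geq g(T) > T-b$, and the comparison $(z,z')_x\geq L$ finishes.

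The main obstacle is the first paragraph, namely correctly passing from the finite Gromov products $(\xi_{xz}(n),\xi_{xz'}(m))_x$ to the boundary product. The monotonicity argument is what lets one identify $\liminf_{n,m}$ with the diagonal limit $L$, and the single $\delta$ lost in \eqref{product-boundary-property} is precisely what forces the shift to time $T-\delta$ in part (i). By contrast the finite triangle estimate, while responsible for the constant $4\delta$, is routine once the products $(p,y)_x$, $(w,q)_x$ are computed.
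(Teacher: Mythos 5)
Your proof is correct, and the constants come out exactly right: the two-sided comparison $(z,z')_x-\delta \leq L \leq (z,z')_x$ obtained from \eqref{product-boundary-property} via the monotonicity of $(n,m)\mapsto(\xi_{xz}(n),\xi_{xz'}(m))_x$, combined with the $4\delta$ finite-segment estimate from two applications of \eqref{hyperbolicity}, yields both (i) and (ii) with no loss. Note that this paper does not prove the lemma itself (it is imported from \cite{Cav21ter}, Lemma 2.2), and your argument is the same standard route one would find there: reduce to Gromov products of points along the rays and control the error in passing to the boundary by the single $\delta$ of \eqref{product-boundary-property}; the only implicit assumptions are the harmless ones already implicit in the statement ($T\geq\delta$ in (i), and $L=+\infty$ when $z=z'$, in which case everything still goes through with suprema in place of limits).
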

\begin{proof}[Proof of Proposition \ref{ent+mink}]
	Let $z_1,\ldots, z_N$ be points of $\partial X$ realizing $\textup{Cov}(\partial X,e^{-T})$, and let $y_i$ be the point at distance $T$ from $x$ along one geodesic ray $\xi_{xz_i}$. We claim that $\lbrace y_i \rbrace$ covers $S(x,T)$ at scale $6\delta$. Indeed let $y\in S(x,T)$ and let $z \in \partial X$ be the point at infinity of a $\sigma$-geodesic ray $\xi$ that extends the $\sigma$-geodesic $[x,y]$. We know it exists $i$ such that $(z,z_i)_x > T$, then by Lemma \ref{product-rays} we get $d(y,y_i)\leq 6\delta$. This shows $\textup{Cov}(\partial X,e^{-T}) \geq \text{Cov}(S(x,T),6\delta)$.\\
	Now let $\lbrace y_i \rbrace$ be points realizing $\text{Cov}(S(x,T+\delta),\delta)$. For every $i$ let $z_i\in\partial X$ be the point at infinity of a $\sigma$-geodesic ray $\xi_i$ that extends the $\sigma$-geodesic $[x,y_i]$. For every $z\in \partial X$ we take a geodesic ray $\xi_{xz}$. We know it exists $i$ such that $d(\xi_{xz}(T+\delta),y_i)\leq \delta < 2\delta$, therefore $(z,z_i)_x > T$ by Lemma \ref{product-rays}. This shows $\textup{Cov}(\partial X,e^{-T}) \leq \text{Cov}(S(x,T+\delta),\delta)$. The conclusion follows by Proposition \ref{cov-ent-pack}.
\end{proof}

Putting together Proposition \ref{cov-ent-pack}, Proposition \ref{cov-vol-asym}, Proposition \ref{ent+mink}, Theorem \ref{lip-top_cov} and Proposition \ref{entropyrelations} we get the proof of Theorem \ref{B}.

\section{Entropies of subsets of the boundary}
\label{sec-closed-subsets}
Let $(X,\sigma)$ be a $\delta$-hyperbolic \textup{GCB}-space that is $P_0$-packed at scale $r_0$. In this section we will consider a subset $C$ of $\partial X$ and we define the relative version, with respect to $C$, of all the different definitions of entropies introduced in the previous sections. We observe that when $C = \partial X$ then we are in the case yet studied. 

\subsection{Covering and volume entropy}
\label{subsec-closed-cov-vol}
Let $(X,\sigma)$ be a $\delta$-hyperbolic \textup{GCB}-space that is $P_0$-packed at scale $r_0$ and let $C$ be a subset of $\partial X$.
The {\em upper covering entropy} of $C$ is defined as
$$\limsup_{T\to +\infty} \frac{1}{T} \log \text{Cov}(\overline{B}(x,T) \cap \overline{B}(\text{QC-Hull}(C),\tau), r),$$
where $r > 0$, $\tau \geq 0$ and $x\in X$ and it is denoted by $\overline{h_{\text{Cov}}}(C)$.
The {\em lower covering entropy} of $C$, denoted by $\underline{h_{\text{Cov}}}(C)$, is defined taking the limit inferior instead of the limit superior. These quantities do not depend on $x \in X$ as usual.
The analogous of Proposition \ref{cov-ent-pack} holds.
\begin{prop}
	\label{cov-ent-closed}
	Let $(X,\sigma)$ be a $\delta$-hyperbolic \textup{GCB}-space that is $P_0$-packed at scale $r_0$, $C$ be a subset of $\partial X$ and $x\in X$. Then
	\begin{equation*}
		\begin{aligned}
			&\frac{1}{T}\log \textup{Cov}(\overline{B}(x,T) \cap \overline{B}(\textup{QC-Hull}(C),\tau), r) \underset{P_0,r_0,r,r', \tau, \tau'}{\asymp}\\
			&\frac{1}{T}\log \textup{Pack}(\overline{B}(x,T) \cap \overline{B}(\textup{QC-Hull}(C),\tau'), r')
		\end{aligned}
	\end{equation*}
	for all $r, r' > 0$ and $\tau,\tau'\geq 0$.
	In particular any of these functions can be used in the definition of the upper and lower covering entropies of $C$.
\end{prop}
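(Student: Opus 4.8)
The plan is to follow the scheme of the proof of Proposition \ref{cov-ent-pack}, isolating the genuinely new ingredient, namely independence from the neighbourhood parameter. Write $Y_\tau^T = \overline{B}(x,T)\cap\overline{B}(\text{QC-Hull}(C),\tau)$. First I would dispose of the dependence on the scales $r,r'$ exactly as there: for any fixed set, passing from a cover at scale $r'$ to one at scale $r$ costs a multiplicative factor at most $\sup_{y}\text{Cov}(\overline{B}(y,r'),r)=\text{Cov}(r',r)$, finite and depending only on $P_0,r_0,r,r'$ by Proposition \ref{packingsmallscales}; together with the trivial monotonicity in the scale this gives $\frac{1}{T}\log\text{Cov}(Y_\tau^T,r)\underset{P_0,r_0,r,r'}{\asymp}\frac{1}{T}\log\text{Cov}(Y_\tau^T,r')$ for every fixed $\tau$. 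The passage between covering and packing is then the classical chain \eqref{packing-covering}. Hence everything reduces to proving that, for $\tau\le\tau'$, one has $\frac{1}{T}\log\text{Cov}(Y_\tau^T,r)\asymp\frac{1}{T}\log\text{Cov}(Y_{\tau'}^T,r)$ at a single common scale.

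One inequality is free: since $Y_\tau^T\subseteq Y_{\tau'}^T$ we get $\text{Cov}(Y_\tau^T,r)\le\text{Cov}(Y_{\tau'}^T,r)$. For the reverse I would bound $\text{Cov}(Y_{\tau'}^T,r)$ by a \emph{constant} multiple of $\text{Cov}(Y_\tau^T,r)$, so that after dividing by $T$ and taking logarithms the constant disappears. The idea is to project the larger tube onto the quasiconvex hull: given $p\in Y_{\tau'}^T$ choose $w\in\text{QC-Hull}(C)$ with $d(p,w)\le\tau'$ and, using Lemma \ref{bicombing-qchull} to replace $\text{QC-Hull}(C)$ by $\text{QC-Hull}_\sigma(C)$ up to $8\delta$, a $\sigma$-geodesic line $\ell$ with endpoints in $C$ passing $8\delta$-close to $w$. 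Showing that every point of $Y_{\tau'}^T$ lies within a bounded distance of a point of $\text{QC-Hull}_\sigma(C)\cap\overline{B}(x,T)$, a cover of $Y_\tau^T$ at a slightly larger scale (absorbed by the scale-independence already established) then covers $Y_{\tau'}^T$, while the number of separated net points falling into any single enlarged ball is bounded by Proposition \ref{packingsmallscales}, producing the required multiplicative constant.

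The delicate point — which I expect to be the main obstacle — is that the naive projection moves a point $p$ with $d(x,p)\le T$ only to a hull point $w$ with $d(x,w)\le T+\tau'$, i.e. it lands in $\overline{B}(x,T+\tau')$ rather than in $\overline{B}(x,T)$. A radius shift $T\mapsto T+\tau'$ is harmless for the \emph{value} of the entropy but not for the $\asymp$ relation, which compares the defining functions \emph{pointwise} in $T$; one cannot bound $\text{Cov}(Y_\tau^{T+\tau'},r)$ by a constant times $\text{Cov}(Y_\tau^{T},r)$ by soft monotonicity alone. To remove the shift I would exploit the GCB-structure: along $\ell$ the function $t\mapsto d(x,\ell(t))$ is convex (convexity of $\sigma$), and by hyperbolicity its slope lies within $O(\delta)$ of $1$ once $\ell(t)$ is far from the foot point of $x$ on $\ell$, which is the case at distance $T\to+\infty$. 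One may therefore slide $w$ back along $\ell$ to the point $w'$ with $d(x,w')=T$ at the cost of $d(w,w')\le\tau'+O(\delta)$, keeping $w'\in\text{QC-Hull}_\sigma(C)\cap\overline{B}(x,T)$. This confines the projection to the ball of radius $T$ and converts the radius shift into a bounded enlargement of the scale, completing the reverse inequality and hence the proposition.
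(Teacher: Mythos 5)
Your overall route is the same as the paper's: the reduction in the scales $r,r'$ and the passage between covering and packing are verbatim those of Proposition \ref{cov-ent-pack} together with \eqref{packing-covering}, and the only new content is the comparison between the $\tau$- and $\tau'$-neighbourhoods, which the paper dispatches in one line via the inequality $\textup{Cov}(\overline{B}(x,T)\cap\overline{B}(\textup{QC-Hull}(C),\tau),r)\le\textup{Cov}(\overline{B}(x,T)\cap\textup{QC-Hull}(C),r_0)\cdot\textup{Cov}(r_0+\tau,r_0)$, asserted to be easy. You correctly isolate exactly what that assertion glosses over: the nearest hull point $w$ to $p\in\overline{B}(x,T)$ may lie outside $\overline{B}(x,T)$, and a pointwise-in-$T$ asymptotic comparison cannot absorb a radius shift by monotonicity alone. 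Your fix --- slide $w$ along its hull geodesic $\ell$ toward the foot of $x$, using convexity of $t\mapsto d(x,\ell(t))$ for a $\sigma$-geodesic line (after invoking Lemma \ref{bicombing-qchull}) and Lemma \ref{projection-quasigeodesic} to bound the displacement by $\tau'+4\delta$ --- is correct and is the natural way to make the paper's inequality rigorous.

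There is, however, one configuration your sliding step does not cover: if $d(x,\ell)\in(T,T+\tau']$ then $\ell$ misses $\overline{B}(x,T)$ entirely and no point $w'$ of $\ell$ with $d(x,w')=T$ exists, so the claim ``keeping $w'\in\textup{QC-Hull}_\sigma(C)\cap\overline{B}(x,T)$'' fails. This situation cannot be excluded by a threshold depending only on $P_0,r_0,r,r',\tau,\tau'$ (already for $C$ a two-point set and $T$ slightly below $d(x,\ell)$ one has $Y^T_{\tau'}\neq\emptyset$ while $Y^T_{\tau}=\emptyset$, and for infinite $C$ analogous grazing geodesics can occur at arbitrarily large $T$). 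To close the gap one must leave $\ell$: apply Lemma \ref{approximation-ray-line} to an endpoint of $\ell$ to replace it by a hull geodesic passing within $L=22\delta+d(x,\textup{QC-Hull}(C))$ of $x$, which certainly meets $\overline{B}(x,T)$ for large $T$, and slide along that one. This works, but the resulting displacement involves $L$, so the threshold acquires a dependence on $\delta$ and on the position of the basepoint --- consistent with the caveat in Theorem \ref{E} and with the subscripts in Proposition \ref{closed-ent-mink}, though not with the subscript displayed in Proposition \ref{cov-ent-closed} itself. The paper's own one-line proof is silent on this very case, so the lacuna is shared; still, since your plan states the containment $w'\in\overline{B}(x,T)$ explicitly, you should add this extra step.
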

\begin{proof}
	Once $\tau$ is fixed the asymptotic estimate can be proved exactly as in Proposition \ref{cov-ent-pack}. Moreover for all $\tau \geq 0$ it is easy to prove that
	\begin{equation*}
		\begin{aligned}
			&\text{Cov}(\overline{B}(x,T) \cap \overline{B}(\text{QC-Hull}(C),\tau), r) \leq\\
			&\text{Cov}(\overline{B}(x,T)\cap \text{QC-Hull}(C), r_0) \cdot \text{Cov}(r_0+\tau,r_0).
		\end{aligned}
	\end{equation*}
	and $\text{Cov}(r_0+\tau,r_0)$ is uniformly bounded in terms of $P_0,r_0$ and $\tau$ by Proposition \ref{packingsmallscales}. This concludes the proof.
\end{proof}
\noindent Clearly when $C=\partial X$ we have $\overline{h_{\textup{Cov}}}(\partial X) = \overline{h_{\textup{Cov}}}(X).$
Moreover if $C$ is a closed subset of $\partial X$ then $\overline{h_{\textup{Cov}}}(C) \leq \overline{h_{\textup{Cov}}}(\partial X),$ so $\overline{h_{\textup{Cov}}}(C) \leq \frac{\log (1+P_0)}{r_0}$ by Lemma \ref{entropy-bound}.\\
The analogous of Proposition \ref{spherical-entropy} holds. We remark that in this case a dependence on $\delta$ appears.
\begin{prop}
	\label{closed-spherical-entropy}
	Let $(X,\sigma)$ be a $\delta$-hyperbolic \textup{GCB}-space that is $P_0$-packed at scale $r_0$, $C$ be a subset of $\partial X$ and $x\in X$. Then
	$$\frac{1}{T}\log \textup{Cov}(\overline{B}(x,T)\cap \textup{QC-Hull}(C), r)\underset{P_0,r_0,r, \delta}{\asymp}\frac{1}{T}\log \textup{Cov}(S(x,T)\cap\textup{QC-Hull}(C),r)$$
	In particular any of these functions can be used in the definition of the upper and lower covering entropies of $C$.
\end{prop}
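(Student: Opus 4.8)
The plan is to mimic the proof of Proposition \ref{spherical-entropy}: one inequality is trivial and the other is obtained by decomposing the ball into annuli and reducing each annulus to the sphere. The single new difficulty is that a $\sigma$-geodesic issued from $x$ towards a point of $\textup{QC-Hull}(C)$ need not stay inside $\textup{QC-Hull}(C)$ after being prolonged, so the naive radial extension used in Proposition \ref{spherical-entropy} is unavailable. Gromov-hyperbolicity is exactly what repairs this, and this is why $\delta$ appears in the threshold. The easy inequality is immediate from $S(x,T)\cap\textup{QC-Hull}(C)\subseteq \overline{B}(x,T)\cap\textup{QC-Hull}(C)$, giving $\textup{Cov}(S(x,T)\cap\textup{QC-Hull}(C),r)\leq \textup{Cov}(\overline{B}(x,T)\cap\textup{QC-Hull}(C),r)$ up to the harmless factor coming from requiring centers to lie in the set, which is absorbed after dividing by $T$.

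The core step I would isolate first is a \emph{radial alignment estimate} with a constant depending only on $\delta$. Let $y\in\overline{B}(x,T)\cap\textup{QC-Hull}(C)$ lie on a geodesic line $\gamma$ with $\gamma^{\pm}\in C$, set $t_0:=d(x,y)$, let $x'$ be a projection of $x$ on $\gamma$ and parametrize so that $\gamma(0)=x'$ with $S:=d(x,x')$. Choosing the orientation so that $y=\gamma(s_0)$ lies on the positive side (otherwise exchange $\gamma^{+}$ and $\gamma^{-}$, both in $C$), Lemma \ref{approximation-line-ray}(ii) applied to $z:=\gamma^{+}$ gives $d(\xi_{xz}(S+s),\gamma(s))\leq 76\delta$, while Lemma \ref{approximation-line-ray}(i) gives $|S+s_0-t_0|\leq 4\delta$; hence
$$d\big(y,\xi_{xz}(t_0)\big)\leq 80\delta.$$
Thus every point of $\overline{B}(x,T)\cap\textup{QC-Hull}(C)$ at distance $t_0$ from $x$ is $80\delta$-close to the radius-$t_0$ point of a ray $\xi_{xz}$ with $z\in C$, and crucially the constant does not involve $d(x,\textup{QC-Hull}(C))$.

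Next I would run the annular decomposition. Fix a net $\{w_1,\dots,w_N\}$ realizing $\textup{Cov}(S(x,T)\cap\textup{QC-Hull}(C),r)$, each $w_m$ lying on a line $\gamma_m$ with endpoints in $C$, with $z_m:=\gamma_m^{+}$ chosen so that $\xi_{xz_m}$ tracks $\gamma_m$ up to $76\delta$. Given $y$ in $A(x,kr,(k+1)r)\cap\textup{QC-Hull}(C)$, I prolong its line $\gamma$ to $w\in\gamma\cap S(x,T)$ on the correct side, so $w\in S(x,T)\cap\textup{QC-Hull}(C)$, and pick $w_m$ with $d(w,w_m)\leq r$. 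The alignment estimate yields $d(\xi_{xz}(T),\xi_{xz_m}(T))\leq r+160\delta$; since two geodesic rays issued from the common point $x$ that are $D$-close at radius $T$ are $4\delta$-close at every radius $\leq T-\tfrac{D}{2}$ (a standard consequence of \eqref{hyperbolicity} through the Gromov product), they are $4\delta$-close at radius $kr$ whenever $kr\leq T-\tfrac{r}{2}-80\delta$. Aligning $y\approx\xi_{xz}(t_0)$ with the point $p_{m,k}\in\gamma_m\approx\xi_{xz_m}(kr)$ at distance $kr$ from $x$ then gives $d(y,p_{m,k})\leq r+164\delta$. The remaining annuli with $kr> T-\tfrac{r}{2}-80\delta$ form a shell of bounded thickness around $S(x,T)$, whose points are prolonged onto $S(x,T)\cap\textup{QC-Hull}(C)$ within $O(r+\delta)$ and are thus covered directly by $\{w_m\}$. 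Summing over the $\leq \tfrac{T}{r}+1$ annuli gives
$$\textup{Cov}\big(\overline{B}(x,T)\cap\textup{QC-Hull}(C),\,r+164\delta\big)\leq \Big(\tfrac{T}{r}+1\Big)\,\textup{Cov}\big(S(x,T)\cap\textup{QC-Hull}(C),r\big).$$

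To finish I would take $\tfrac1T\log$: the factor $\tfrac1T\log(\tfrac{T}{r}+1)$ tends to $0$, and the scale change from $r+164\delta$ back to $r$ on the left is controlled, with threshold depending only on $P_0,r_0,r,\delta$, by Proposition \ref{cov-ent-closed} (with $\tau=0$) together with Proposition \ref{packingsmallscales}. Combined with the easy inequality this yields the claimed $\asymp$. The step I expect to be the genuine obstacle is the $80\delta$ radial-alignment estimate with a constant free of $d(x,\textup{QC-Hull}(C))$: it rests on the $76\delta$-tracking of a line by the ray to its endpoint in Lemma \ref{approximation-line-ray}(ii) (rather than the weaker part (iii), whose constant grows with $S$), and it is the only place where the hyperbolicity constant $\delta$ must truly enter to keep the whole comparison uniform.
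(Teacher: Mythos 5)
Your proposal is correct and follows essentially the same route as the paper's proof: the trivial inclusion for one inequality, the annular decomposition of $\overline{B}(x,T)\cap\textup{QC-Hull}(C)$, the prolongation of each point along its \emph{own} line with endpoints in $C$ (so that the extension stays in $\textup{QC-Hull}(C)$) to reach $S(x,T)$, a radial alignment costing only $O(r+\delta)$, and the final rescaling via Proposition \ref{cov-ent-closed}. The only difference is in how the alignment is executed: you route it through the rays $\xi_{xz}$ to the endpoints at infinity, using Lemma \ref{approximation-line-ray}(ii) together with the Gromov-product fellow-traveling of Lemma \ref{product-rays}, whereas the paper works directly with the finite $\sigma$-geodesics $[x,y_T]$ and $[x,y_i]$ via Lemma \ref{projection-quasigeodesic}(i) and the convexity of the bicombing; both variants produce a dense set at scale $r+O(\delta)$ with the same threshold dependence on $P_0,r_0,r,\delta$.
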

\begin{proof}
	As in the proof of Proposition \ref{spherical-entropy} one inequality is obvious, so we are going to prove the other.
	We divide the ball $\overline{B}(x,T)$ in the annulii $A(x,kr, (k+1)r)$ with $k=0, \ldots, \frac{T}{r}-1$.
	Therefore we can estimate the quantity $\text{Cov}(\overline{B}(x,T)\cap \text{QC-Hull}(C), 72\delta + 2r)$ from above by
	$$\sum_{k=0}^{\frac{T}{r}-1}\text{Cov}(A(x,kr,(k+1)r) \cap \text{QC-Hull}(C), 72\delta + 2r).$$
	We claim that every element of the sum is 
	$\leq \text{Cov}(S(x,T)\cap\text{QC-Hull}(C),r).$
	Indeed let $y_1,\ldots, y_N$ be a set of points realizing $\text{Cov}(S(x,T) \cap \text{QC-Hull}(C),r)$. For all $i=1,\ldots,N$ we consider the $\sigma$-geodesic segment $\gamma_i=[x,y_i]$ and we call $x_i$ the point along this geodesic at distance $kr$ from $x$. We want to show that $x_1,\dots,x_N$ is a $(72\delta + 2r)$-dense subset of $A(x,kr,(k+1)r) \cap \text{QC-Hull}(C)$. Given a point $y \in A(x,kr,(k+1)r)\cap \text{QC-Hull}(C)$
	there exists a $\sigma$-geodesic line $\gamma$ with endpoints in $C$ containing $y$. We parametrize $\gamma$ so that $\gamma(0)$ is a projection of $x$ on $\gamma$ and $y\in \gamma\vert_{[0,+\infty)}$. We take a point $y_T \in \gamma\vert_{[0,+\infty)}$ at distance $T$ from $x$, so that $y_T\in S(x,T)\cap\text{QC-Hull}(C)$ and therefore there exists $i$ such that $d(y_T,y_i)\leq r$.
	By Lemma \ref{projection-quasigeodesic} the path $\alpha = [x,\gamma(0)] \cup [\gamma(0), y_T]$, where the second geodesic is a subsegment of $\gamma$, is a $(1, 4\delta)$-quasigeodesic and, if $t_y$ denotes the real number such that $\alpha(t_y)=y$, it holds $t_y \in [kr, (k+1)r].$	
	By Lemma \ref{projection-quasigeodesic} we get $d(y,\gamma_i'(t_y))\leq 72\delta$, where $\gamma_i'$ is the $\sigma$-geodesic $[x,y_T]$.
%	We know that
%	and
%	$$d(x,\pi x) + d(\pi x, y) - 4\delta \leq d(x,y) \leq d(x,\pi x) + d(\pi x, y).$$
%	Therefore we get
%	$$t_y = d(x,\pi x) + d(\pi x, y) \in [kr_0, kr_0 + r_0 + 4\delta].$$
%	We call $x_{t_y}$ the point along $[x,y_i]$ at distance $t_y$ from $x$. Then $d(y,x_{t_y}) \leq 36\delta + 5r_0$.
	We conclude the proof of the claim since 
	$$d(y,x_i)\leq d(y,\gamma_i'(t_y)) + d(\gamma_i'(t_y), \gamma_i(t_y))+ d(\gamma_i(t_y), x_i) \leq 72\delta + 2r,$$ from the convexity of $\sigma$. 
	The thesis follows by Proposition \ref{cov-ent-closed}.
\end{proof}

The {\em upper volume entropy} of $C$ with respect to a measure $\mu$ is
$$\overline{h_\mu}(C) = \sup_{\tau \geq 0}\limsup_{T\to + \infty}\frac{1}{T}\log \mu(\overline{B}(x,T) \cap\overline{B}(\text{QC-Hull}(C),\tau)),$$
where $x \in X$.
Taking the limit inferior instead of the limit superior is defined the {\em lower volume entropy} of $C$, $\underline{h_\mu}(C)$.

\begin{prop}
	\label{closed-volume-entropy}
	Let $(X,\sigma)$ be a $\delta$-hyperbolic \textup{GCB}-space that is $P_0$-packed at scale $r_0$, let $C$ be a subset of $\partial X$ and let $\mu$ be a measure on $X$ which is $H$-homogeneous at scale $r$. Then for all $\tau \geq r$ it holds
	\begin{equation*}
		\begin{aligned}
			&\frac{1}{T}\log\mu(\overline{B}(x,T) \cap\overline{B}(\textup{QC-Hull}(C),\tau))\underset{H,P_0,r_0,r,\tau}{\asymp}\\
			&\frac{1}{T}\log \textup{Cov}(\overline{B}(x,T) \cap \textup{QC-Hull}(C), r_0).
		\end{aligned}
	\end{equation*}
	In particular the upper (resp. lower) volume entropy of $C$ with respect to $\mu$ coincides with the upper (resp. lower) covering entropy of $C$ and it can be computed using $\tau = r$ in place of the supremum.
\end{prop}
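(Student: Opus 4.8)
The plan is to follow the proof of Proposition \ref{cov-vol-asym} almost verbatim, replacing the ball $\overline{B}(x,T)$ there by $\overline{B}(x,T)\cap\overline{B}(\textup{QC-Hull}(C),\tau)$ and using the relative covering--packing equivalence of Proposition \ref{cov-ent-closed} in place of Proposition \ref{cov-ent-pack}. Throughout I would write $Y=\textup{QC-Hull}(C)$ and keep $\tau\geq r$ fixed; the strategy is to trap $\mu(\overline{B}(x,T)\cap\overline{B}(Y,\tau))$ between a covering number and a packing number of nearly the same set, and then to recognize both bounding quantities as $\frac1T\log\textup{Cov}(\overline{B}(x,T)\cap Y,r_0)$ up to the asymptotic relation $\asymp$.

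For the upper estimate I would cover $\overline{B}(x,T)\cap\overline{B}(Y,\tau)$ by $\textup{Cov}(\overline{B}(x,T)\cap\overline{B}(Y,\tau),r)$ balls of radius $r$; since $\mu$ is $H$-homogeneous at scale $r$ each of them has measure at most $H$, so
$$\mu(\overline{B}(x,T)\cap\overline{B}(Y,\tau))\leq H\cdot\textup{Cov}(\overline{B}(x,T)\cap\overline{B}(Y,\tau),r).$$
For the lower estimate I would pick a maximal $2r$-separated subset $\{p_1,\dots,p_M\}$ of $\overline{B}(x,T-r)\cap Y$, so that $M=\textup{Pack}(\overline{B}(x,T-r)\cap Y,r)$. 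The closed balls $\overline{B}(p_i,r)$ are pairwise disjoint, each lies in $\overline{B}(x,T)$ because $p_i\in\overline{B}(x,T-r)$, and each lies in $\overline{B}(Y,r)\subseteq\overline{B}(Y,\tau)$ because $p_i\in Y$ and $\tau\geq r$; as each has measure at least $1/H$ this yields
$$\mu(\overline{B}(x,T)\cap\overline{B}(Y,\tau))\geq\frac{1}{H}\,\textup{Pack}(\overline{B}(x,T-r)\cap Y,r).$$
The inclusion $\overline{B}(Y,r)\subseteq\overline{B}(Y,\tau)$ is precisely where the hypothesis $\tau\geq r$ enters, and it is the only place it is used.

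It then remains to take $\frac1T\log$ of both inequalities and identify the two outer terms. On the upper side the additive constant $\frac{\log H}{T}$ is negligible and, by Proposition \ref{cov-ent-closed} (its $\tau'=0$ case together with \eqref{packing-covering}), $\frac1T\log\textup{Cov}(\overline{B}(x,T)\cap\overline{B}(Y,\tau),r)\underset{P_0,r_0,r,\tau}{\asymp}\frac1T\log\textup{Cov}(\overline{B}(x,T)\cap Y,r_0)$. On the lower side I would write $\textup{Pack}(\overline{B}(x,T-r)\cap Y,r)$ as the value at $T-r$ of $S\mapsto\textup{Pack}(\overline{B}(x,S)\cap Y,r)$; this function grows at most exponentially by Proposition \ref{packingsmallscales}, so the shift of the argument by the constant $r$ is absorbed exactly as in Proposition \ref{cov-vol-asym} using $\frac{T-r}{T}\underset{r}{\asymp}1$, and a further application of Proposition \ref{cov-ent-closed} turns the packing at scale $r$ into $\frac1T\log\textup{Cov}(\overline{B}(x,T)\cap Y,r_0)$. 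Chaining the two ends gives the claimed equivalence with threshold depending only on $H,P_0,r_0,r,\tau$.

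For the final assertions I would observe that the right-hand side of the equivalence does not involve $\tau$, so every $\tau\geq r$ yields the same upper (resp. lower) exponent, namely $\overline{h_{\textup{Cov}}}(C)$ (resp. $\underline{h_{\textup{Cov}}}(C)$). Since $\mu(\overline{B}(x,T)\cap\overline{B}(Y,\tau))$ is nondecreasing in $\tau$, the supremum over $\tau\geq 0$ defining $\overline{h_\mu}(C)$ is already attained at $\tau=r$, which is exactly the last claim. I do not anticipate a serious obstacle: the content is essentially contained in Propositions \ref{cov-ent-closed} and \ref{packingsmallscales}, and the only care needed is the bookkeeping of the constant shift $T\mapsto T-r$ and resisting the temptation to cover $\overline{B}(x,T)\cap\overline{B}(Y,\tau)$ by a naive net of $Y$, which would push the radius up to $T+\tau$; routing the neighborhood through Proposition \ref{cov-ent-closed} avoids this.
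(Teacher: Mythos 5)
Your proof is correct and follows essentially the same route as the paper, which simply says that the argument of Proposition \ref{cov-vol-asym} carries over; you have filled in the details, trapping the measure between a covering number of $\overline{B}(x,T)\cap\overline{B}(\textup{QC-Hull}(C),\tau)$ and a packing number of $\overline{B}(x,T-r)\cap\textup{QC-Hull}(C)$ and then invoking Proposition \ref{cov-ent-closed}. The only cosmetic difference is that the paper absorbs the $\tau$-neighbourhood by upgrading $\mu$ to $H(\tau)$-homogeneity at scale $\tau$ via Remark \ref{homogeneous}, whereas you cover directly at scale $r$ and let Proposition \ref{cov-ent-closed} handle the change of neighbourhood parameter; both are sound.
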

\begin{proof}
	By Remark \ref{homogeneous} we know that $\mu$ is $H(\tau)$-homogeneous at scale $\tau$ for all $\tau\geq r$, where $H(\tau)$ depends on $P_0,r_0,\tau,r,H$. Therefore the proof of Proposition \ref{cov-vol-asym} works in this case.
%	Therefore it holds
%	$$\mu(\overline{B}(\text{QC-Hull}(C),\sigma)\cap \overline{B}(x_0,R)) \leq \text{Cov}(\overline{B}(\text{QC-Hull}(C),\sigma)\cap \overline{B}(x_0,R), r_0)\cdot H(r_0)$$
%	and
%	$$\mu_X(\overline{B}(\text{QC-Hull}(C),\sigma)\cap\overline{B}(x_0,R)) \geq \text{Pack}(\text{QC-Hull}(C)\cap\overline{B}(x_0,R - \sigma), \sigma)\cdot H'(\sigma).$$
\end{proof}

\subsection{Lipschitz topological entropy}
\label{subsec-closed-liptop}
Let $(X,\sigma)$ be a $\delta$-hyperbolic \textup{GCB}-space that is $P_0$-packed at scale $r_0$.
For a subset $C$ of $\partial X$ and $Y\subseteq X$ we set
$$\text{Geod}_\sigma(Y,C)= \lbrace \gamma \in \text{Geod}_\sigma(X) \text{ s.t. } \gamma^{\pm} \subseteq C \text{ and } \gamma(0)\in Y\rbrace.$$ 
If $Y = X$ we simply write $\text{Geod}_\sigma(C)$.
Clearly $\text{Geod}_\sigma(C)$ is a $\Phi$-invariant subset of $\text{Geod}_\sigma(X)$, so the reparametrization flow is well defined on it. 
The {\em upper Lipschitz-topological entropy} of $\text{Geod}_\sigma(C)$ is defined as
$$\overline{h_{\text{Lip-top}}}(\text{Geod}_\sigma(C)) = \inf_{\text{\texthtd}}\sup_K \lim_{r\to 0} \limsup_{T\to + \infty}\frac{1}{T}\log\text{Cov}_{\text{\texthtd}^T}(K,r),$$
where the infimum is taken among all geometric metrics on $\text{Geod}_\sigma(C)$. The {\em lower Lipschitz-topological entropy} is defined taking the limit inferior instead of the limit superior and it is denoted by $\underline{h_{\text{Lip-top}}}(\text{Geod}_\sigma(C))$.
In the following result we observe the difference between closed and non-closed subsets of $\partial X$.
\begin{theo}
	\label{lipschitz-entropy}
	Let $(X,\sigma)$ be a $\delta$-hyperbolic \textup{GCB}-space that is $P_0$-packed at scale $r_0$ and $C$ be a subset of $\partial X$. Then
	$$\overline{h_{\textup{Lip-top}}}(\textup{Geod}_\sigma(C)) = \sup_{C'\subseteq C}\overline{h_{\textup{Cov}}}(C'),$$
	where the supremum is among closed subsets $C'$ of $C$.
	The same holds for the lower entropies.
\end{theo}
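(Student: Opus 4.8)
The plan is to prove the two inequalities separately, in each case reducing to a closed subset and then invoking the spherical covering estimate of Proposition~\ref{closed-spherical-entropy}. Two structural facts drive the reduction. First, the Lipschitz-topological entropy is monotone: if $C'\subseteq C$ then every geometric metric on $\textup{Geod}_\sigma(C)$ restricts to a geometric metric on $\textup{Geod}_\sigma(C')$ and compact subsets of $\textup{Geod}_\sigma(C')$ are compact in $\textup{Geod}_\sigma(C)$, whence $\overline{h_{\textup{Lip-top}}}(\textup{Geod}_\sigma(C))\geq\overline{h_{\textup{Lip-top}}}(\textup{Geod}_\sigma(C'))$. Second, since $\partial X$ is compact and the endpoint maps $\gamma\mapsto\gamma^{\pm}$ are continuous, any compact $K\subseteq\textup{Geod}_\sigma(C)$ has closed endpoint set $C'_K\subseteq C$, so $K\subseteq\textup{Geod}_\sigma(C'_K)$; this is what lets the supremum over compact sets be traded for a supremum over closed subsets.

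For the lower bound it suffices, by monotonicity, to prove $\overline{h_{\textup{Lip-top}}}(\textup{Geod}_\sigma(C'))\geq\overline{h_{\textup{Cov}}}(C')$ for $C'$ closed. Fix a geometric metric \texthtd{} with Lipschitz constant $M$ for the evaluation map $E$, and, using the basepoint-independence of $\overline{h_{\textup{Cov}}}(C')$, take $x\in\textup{QC-Hull}(C')$. With the compact set $K=\textup{Geod}_\sigma(\overline{B}(x,30\delta),C')$, I would show that whenever $\{\gamma_i\}$ covers $K$ at scale $r_0$ with respect to $\text{\texthtd}^T$, the points $\{\gamma_i(T)\}$ cover $S(x,T)\cap\textup{QC-Hull}(C')$ at scale $Mr_0+c(\delta)$. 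Indeed, for such a $y$, lying on a $\sigma$-geodesic line with a suitable endpoint $z\in C'$, Lemma~\ref{approximation-ray-line} (followed by the passage to the $\sigma$-geodesic with the same endpoints, which costs $8\delta$) produces a line $\tilde\gamma\in K$ with $d(\xi_{xz}(t),\tilde\gamma(t))\leq 30\delta$, while Lemmas~\ref{approximation-line-ray} and~\ref{product-rays} give $d(y,\xi_{xz}(T))\leq c(\delta)$; the $M$-Lipschitz bound $d(\tilde\gamma(T),\gamma_i(T))\leq Mr_0$ then closes the estimate. Dividing by $T$, passing to the $\limsup$, and applying Propositions~\ref{closed-spherical-entropy} and~\ref{cov-ent-closed} yields $\overline{h_{\textup{Lip-top}}}(\textup{Geod}_\sigma(C'))\geq\overline{h_{\textup{Cov}}}(C')$.

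For the upper bound I would exhibit a single geometric metric, namely one of the distances induced by $f\in\mathcal{F}$, which is geometric by Lemma~\ref{PropGeod}.(iii); thus $\overline{h_{\textup{Lip-top}}}(\textup{Geod}_\sigma(C))\leq\sup_K\lim_{r\to0}\limsup_T\frac{1}{T}\log\textup{Cov}_{f^T}(K,r)$. Each compact $K$ lies in $\textup{Geod}_\sigma(\overline{B}(x,R),C'_K)$ for its closed endpoint set $C'_K\subseteq C$ and some $R$; because the Key Lemma~\ref{residualentropy}, and hence Corollary~\ref{entropyrescale}, hold verbatim for an arbitrary subset, the scale $r$ may be fixed to $r_0$. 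It then remains to bound $\textup{Cov}_{f^T}(\textup{Geod}_\sigma(\overline{B}(x,R),C'_K),r_0)$ above by $\textup{Cov}(S(x,T)\cap\textup{QC-Hull}(C'_K),r')$: starting from a net of the sphere-slice, one extends the corresponding $\sigma$-geodesic segments to lines with endpoints in $C'_K$ passing near $x$ (again via the approximation lemmas), and checks, using the convexity of $\sigma$ and the integral estimates of Lemma~\ref{residualentropy}, that these form an $f^T$-net of the geodesics passing near $x$. Proposition~\ref{closed-spherical-entropy} then identifies the rate with $\overline{h_{\textup{Cov}}}(C'_K)\leq\sup_{C'}\overline{h_{\textup{Cov}}}(C')$, and the supremum over $K$ completes the inequality. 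The lower-entropy statement follows by replacing every $\limsup$ with $\liminf$ throughout.

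The main obstacle is the boundary version of the computation behind Proposition~\ref{entropyrelations}.(iii): in contrast with the ambient case of Theorem~\ref{lip-top_cov}, the $\sigma$-geodesic lines with endpoints in $C'$ do not all pass through a common basepoint, so the clean sphere-to-geodesic correspondence used there must be replaced by the quasigeodesic approximation machinery of Section~4 (Lemmas~\ref{projection-quasigeodesic}, \ref{approximation-line-ray} and~\ref{approximation-ray-line}). The delicate point is to verify that the additive errors produced by these lemmas are bounded by constants depending only on $\delta$, so that they disappear after dividing by $T$ and do not perturb the exponential rate.
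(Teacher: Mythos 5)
Your proposal is correct and follows essentially the same route as the paper: the reduction of compact sets to $\textup{Geod}_\sigma(\overline{B}(x,R),C')$ with $C'$ closed via continuity of the endpoint maps, the lower bound by pushing an \texthtd$^T$-net of geodesics to a net of $S(x,T)\cap\textup{QC-Hull}(C')$ through the Lipschitz evaluation map, and the upper bound via the metrics $f\in\mathcal{F}$, the Key Lemma for scale independence, and lifting sphere-slice nets to $f^T$-nets (this is exactly Proposition \ref{closed-entropyrelations} combined with Proposition \ref{closed-spherical-entropy}). The only cosmetic deviations are the constant $30\delta$ in place of $L$ and a citation of Lemma \ref{product-rays} where Lemma \ref{approximation-line-ray} already suffices; neither affects the argument.
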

\noindent We remark that the supremum of the covering entropies among the closed subsets of $C$ can be strictly smaller than the covering entropy of $C$ (see \cite{Cav21bis}), marking the distance between the equivalences of the different notions of entropies in case of non-closed subsets of the boundary.
We start with an easy lemma.
\begin{lemma}
	Let $(X,\sigma)$ and $C$ be as in Theorem \ref{lipschitz-entropy} and let $x\in X$. Then every compact subset of $\textup{Geod}_\sigma(C)$ is contained in $\textup{Geod}_\sigma(\overline{B}(x,R), C')$ for some $R\geq 0$ and some $C'\subseteq C$ closed. Moreover $\textup{Geod}_\sigma(\overline{B}(x_0,R), C')$ is compact for all $R\geq 0$ and all closed $C'\subseteq C$.
\end{lemma}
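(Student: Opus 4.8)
The plan is to reduce both assertions to the continuity of the two endpoint maps $e^{\pm}\colon \textup{Geod}_\sigma(X) \to \partial X$, $e^{\pm}(\gamma) = \gamma^{\pm}$, after which everything is soft topology. First I would record the standard fact that, for a proper $\delta$-hyperbolic space, the topology of $\partial X$ is governed by Gromov products: $z_n \to z$ in $\partial X$ if and only if $(z_n,z)_x \to +\infty$ for one (equivalently every) basepoint $x$. Since $\partial X$ is compact and metrizable, compact subsets of $\partial X$ are closed and closed subsets are compact; these facts will be used freely below.

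The heart of the argument, and the only non-formal step, is the continuity of $e^{+}$ (the map $e^{-}$ being identical). Let $\gamma_n \to \gamma$ in $\textup{Geod}_\sigma(X)$, i.e.\ uniformly on compact subsets, and fix the basepoint $o = \gamma(0)$. For a fixed $T \geq 0$ the positive ray of $\gamma$ issues from $o$, so $(\gamma(T),\gamma^+)_o \geq T$; computing the product based at $\gamma_n(0)$ and then moving the basepoint to $o$ (which costs at most $d(o,\gamma_n(0)) =: \varepsilon_n \to 0$) gives $(\gamma_n(T),\gamma_n^+)_o \geq T - \varepsilon_n$; and since $d(\gamma_n(T),\gamma(T)) \to 0$ while $d(o,\gamma_n(T)) \to T$ one gets $(\gamma_n(T),\gamma(T))_o \to T$. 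Applying the four-point inequality \eqref{hyperbolicity-boundary} (in its version for mixed interior/boundary products) to $\gamma_n^+,\gamma_n(T),\gamma(T),\gamma^+$ yields
$$(\gamma_n^+,\gamma^+)_o \geq \min\{(\gamma_n(T),\gamma_n^+)_o,\ (\gamma_n(T),\gamma(T))_o,\ (\gamma(T),\gamma^+)_o\} - 2\delta,$$
whence $\liminf_{n}(\gamma_n^+,\gamma^+)_o \geq T - 2\delta$. As $T$ is arbitrary, $(\gamma_n^+,\gamma^+)_o \to +\infty$, that is $\gamma_n^+ \to \gamma^+$, proving continuity. The main obstacle is entirely concentrated here: one must be slightly careful that the product estimates survive passing to the boundary, which is exactly where the analogue of \eqref{product-boundary-property} for mixed products and the choice $o=\gamma(0)$ are used.

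Granting continuity, the two statements follow formally. For the first, given a compact $K \subseteq \textup{Geod}_\sigma(C)$, the set $E(K)$ is compact, hence bounded, because $E$ is continuous, so $E(K) \subseteq \overline{B}(x,R)$ for some $R$; and $C' := e^+(K) \cup e^-(K)$ is a continuous image of a compact set, hence compact and therefore closed in $\partial X$, is contained in $C$ by the very definition of $\textup{Geod}_\sigma(C)$, and satisfies $K \subseteq \textup{Geod}_\sigma(\overline{B}(x,R),C')$ by construction. For the \emph{moreover} part, recall that $\textup{Geod}_\sigma(\overline{B}(x_0,R)) = E^{-1}(\overline{B}(x_0,R))$ is compact since $E$ is proper; then
$$\textup{Geod}_\sigma(\overline{B}(x_0,R),C') = \textup{Geod}_\sigma(\overline{B}(x_0,R)) \cap (e^+)^{-1}(C') \cap (e^-)^{-1}(C')$$
is a closed subset (being the intersection of preimages of the closed set $C'$ under the continuous maps $e^{\pm}$ with a compact set) of a compact set, hence compact. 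Thus once the continuity of the endpoint maps is in hand the lemma is purely topological.
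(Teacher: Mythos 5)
Your proof is correct and follows essentially the same route as the paper: reduce everything to the continuity of the evaluation map (for boundedness and, via properness, compactness of $\textup{Geod}_\sigma(\overline{B}(x,R))$) and of the endpoint maps $\gamma\mapsto\gamma^\pm$ (to get that $C'=e^+(K)\cup e^-(K)$ is closed and that $\textup{Geod}_\sigma(\overline{B}(x_0,R),C')$ is a closed subset of a compact set). The only difference is that the paper simply cites the continuity of the endpoint maps from the literature (\cite{BL12}, Lemma 1.6), whereas you supply a correct Gromov-product argument for it.
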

\begin{proof}
	We fix a compact subset $K$ of $\textup{Geod}_\sigma(C)$. The continuity of the evaluation map gives that $E(K)$ is contained in some ball $\overline{B}(x,R)$. Moreover the maps $+,-\colon \text{Geod}_\sigma(X)\to \partial X$, defined by $\gamma \mapsto \gamma^+, \gamma^-$ respectively, are continuous (\cite{BL12}, Lemma 1.6). This means that $C' = +(K)\cup -(K)$ is a closed subset of $\partial X$ and clearly $K\subseteq \text{Geod}_\sigma(\overline{B}(x,R), C')$.
	By a similar argument, and since the evaluation map is proper, it follows that the set $\text{Geod}_\sigma(\overline{B}(x,R), C')$ is compact for all $R\geq 0$ and all $C'\subseteq C$ closed.
%	 We have
%	$$\text{Geod}(\overline{B}(x_0,R), C) = E^{-1}(\overline{B}(x_0,R)) \cap \text{Geod}(C).$$
%	The map $E$ is proper, then $E^{-1}(\overline{B}(x_0,R))$ is compact in $\text{Geod}(X)$. Moreover $\text{Geod}(C)$ is closed in $\text{Geod}(X)$ since the boundary maps $\gamma \mapsto \gamma^+ \subset \partial\gamma$ and $\gamma \mapsto \gamma^- \subset \partial\gamma$ are continuous. Then the intersection is a closed subset of a compact space, hence it is compact.
\end{proof}
For a metric $f\in \mathcal{F}$ and $C\subseteq \partial X$ we denote by $\overline{h_f}$ the upper metric entropy of Geod$_\sigma(C)$ with respect to $f$, that is
$$\overline{h_f}(\text{Geod}_\sigma(C))=\sup_K \lim_{r\to 0} \limsup_{T\to + \infty}\frac{1}{T}\log\text{Cov}_{f^T}(K,r).$$
Taking the limit inferior instead of the limit superior we define the lower metric entropy of Geod$_\sigma(C)$ with respect to $f$, denoted by  $\underline{h_f}(\text{Geod}_\sigma(C))$.
The analogous of Proposition \ref{entropyrelations} is the following.
\begin{prop}
	\label{closed-entropyrelations}
	Let $(X,\sigma)$ be as in Theorem \ref{lipschitz-entropy}, $C'$ be a closed subset of $\partial X$, $f\in \mathcal{F}$, $x\in X$ and $L$ be the constant given by Lemma \ref{approximation-ray-line}. Then
	\begin{itemize}
		\item[(i)] $\overline{h_f}(\textup{Geod}_\sigma(\overline{B}(x,R), C')) = \overline{h_f}(\textup{Geod}_\sigma(\overline{B}(x,L), C'))$ for all $R\geq L$; 
		\item[(ii)] $\overline{h_f}(\textup{Geod}_\sigma(C')) = \overline{h_f}(\textup{Geod}_\sigma(\overline{B}(x,L), C')) \leq \overline{h_{\textup{Cov}}}(C');$
		\item[(iii)] The function $r\mapsto \limsup_{T\to +\infty}\frac{1}{T}\log \textup{Cov}_{f^T}(\textup{Geod}_\sigma(\overline{B}(x,L), C'), r)$
		is constant.
	\end{itemize}
	The same conclusions hold for the lower entropies.
\end{prop}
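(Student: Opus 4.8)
The plan is to follow the scheme of Proposition \ref{entropyrelations}, the only structural novelty being that the step ``extend a finite $\sigma$-geodesic segment to a line through the basepoint'' — which does not respect the constraint $\gamma^\pm\in C'$ — must be replaced by an application of Lemma \ref{approximation-ray-line}, producing honest elements of $\textup{Geod}_\sigma(C')$ passing through $\overline{B}(x,L)$. I would prove the items in the order (iii), (i), (ii). Item (iii) is immediate: it is Corollary \ref{entropyrescale} with $\textup{Geod}_\sigma(\overline{B}(x,L),C')$ in place of $\textup{Geod}_\sigma(\overline{B}(x,R))$, since the Key Lemma (Lemma \ref{residualentropy}) bounds the residual $f^T$-entropy of a ball uniformly over all $\gamma\in\textup{Geod}_\sigma(X)$, hence in particular over the subset $\textup{Geod}_\sigma(\overline{B}(x,L),C')$; its proof transfers verbatim.

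For item (i) the inclusion $\textup{Geod}_\sigma(\overline{B}(x,L),C')\subseteq\textup{Geod}_\sigma(\overline{B}(x,R),C')$ gives one inequality. For the reverse I would fix $\varepsilon>0$, $T>R/\varepsilon$ and a net $\{\gamma_i\}$ that is $r$-dense in $\textup{Geod}_\sigma(\overline{B}(x,L),C')$ for $f^{(1+\varepsilon)T}$, and show it is $K$-dense in $\textup{Geod}_\sigma(\overline{B}(x,R),C')$ for $f^T$. Given $\gamma$, put $S=d(x,\gamma(0))\le R$, orient $\gamma$ as in Lemma \ref{approximation-line-ray}.(ii) so that $d(\xi_{x\gamma^+}(S+t),\gamma(t))\le 76\delta$ for $t\ge 0$, and apply Lemma \ref{approximation-ray-line} to $z=\gamma^+\in C'$ (using Lemma \ref{bicombing-qchull} to take the resulting line $\sigma$-geodesic, up to an additive $8\delta$) to get $\eta\in\textup{Geod}_\sigma(\overline{B}(x,L),C')$ with $d(\xi_{x\gamma^+}(t),\eta(t))\le L$. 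Evaluating both estimates at $t=(1+\varepsilon)T$ and absorbing the shift $S\le R$ yields $d(\gamma((1+\varepsilon)T),\eta((1+\varepsilon)T))\le L+R+76\delta+8\delta=:K_0$. Taking $\gamma_i$ $r$-close to $\eta$ and using Lemma \ref{PropGeod}.(iii) gives $d(\gamma((1+\varepsilon)T),\gamma_i((1+\varepsilon)T))\le K_0+r$, while $d(\gamma(0),\gamma_i(0))\le R+L$; convexity of $t\mapsto d(\gamma(t),\gamma_i(t))$ then sandwiches this function by $K_0+r$ on $[0,(1+\varepsilon)T]$, and the standard splitting of $f^t(\gamma,\gamma_i)$ into $(-\infty,0]$, $[0,(1+\varepsilon)T]$ and $[(1+\varepsilon)T,+\infty)$ — with the two tails controlled by $C(f)$ exactly as in Lemma \ref{residualentropy} and Proposition \ref{entropyrelations}.(ii) — gives $f^T(\gamma,\gamma_i)\le K$. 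Thus $\textup{Cov}_{f^T}(\textup{Geod}_\sigma(\overline{B}(x,R),C'),K)\le \textup{Cov}_{f^{(1+\varepsilon)T}}(\textup{Geod}_\sigma(\overline{B}(x,L),C'),r)$; dividing by $T$, taking $\limsup$, fixing the scale via item (iii), and letting $\varepsilon\to 0$ closes item (i).

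For item (ii), the equality $\overline{h_f}(\textup{Geod}_\sigma(C'))=\overline{h_f}(\textup{Geod}_\sigma(\overline{B}(x,L),C'))$ follows from item (i) together with the lemma preceding this proposition: every compact subset of $\textup{Geod}_\sigma(C')$ lies in some $\textup{Geod}_\sigma(\overline{B}(x,R),C')$ with $R\ge L$, so the supremum over compacts defining $\overline{h_f}(\textup{Geod}_\sigma(C'))$ equals $\sup_{R\ge L}\overline{h_f}(\textup{Geod}_\sigma(\overline{B}(x,R),C'))$, which is constant by (i). For the bound $\le \overline{h_{\textup{Cov}}}(C')$ I would use evaluation at time $T$: any $\gamma\in\textup{Geod}_\sigma(\overline{B}(x,L),C')$ satisfies $\gamma(T)\in \overline{B}(x,T+L)\cap\overline{B}(\textup{QC-Hull}(C'),8\delta)$, since $\gamma\subseteq\textup{QC-Hull}_\sigma(C')$ and $\sigma$-geodesics are $8\delta$-close to their ordinary counterparts (Lemma \ref{bicombing-qchull}). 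Choosing $\{\gamma_k\}$ so that $\{\gamma_k(T)\}$ is $r_0$-dense in the image set, a nearest $\gamma_k$ to a given $\gamma$ has $d(\gamma(T),\gamma_k(T))\le r_0$ and $d(\gamma(0),\gamma_k(0))\le 2L$, whence convexity again sandwiches $d(\gamma(t),\gamma_k(t))$ on $[0,T]$ and $f^T(\gamma,\gamma_k)\le K'$. This gives $\textup{Cov}_{f^T}(\textup{Geod}_\sigma(\overline{B}(x,L),C'),K')\le \textup{Cov}(\overline{B}(x,T+L)\cap\overline{B}(\textup{QC-Hull}(C'),8\delta),r_0)$; applying $\limsup\frac1T\log$, item (iii) on the left and Proposition \ref{cov-ent-closed} (with $\tau=8\delta$) on the right finishes. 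The lower-entropy statements follow by replacing every $\limsup$ with $\liminf$.

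The delicate point is item (i). Unlike Proposition \ref{entropyrelations}, where convexity of $\sigma$ sufficed, here the approximating line must simultaneously keep both endpoints in $C'$ and pass through $\overline{B}(x,L)$, and the estimate at the far time $(1+\varepsilon)T$ rests essentially on the hyperbolicity inputs of Lemmas \ref{projection-quasigeodesic}, \ref{approximation-line-ray} and \ref{approximation-ray-line}, together with careful tracking of the time-shift $S\le R$ and of the $8\delta$ gap between ordinary and $\sigma$-geodesics. Arranging that all these additive terms collapse into a single constant $K(R,L,\delta,r,f)$ independent of $T$ — so that the rescaling of item (iii) can be invoked — is precisely where the relative argument genuinely departs from the non-relative one.
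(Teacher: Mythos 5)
Your proposal is correct in substance and shares the paper's overall architecture (reduce to the $L$-ball via Lemmas \ref{approximation-line-ray} and \ref{approximation-ray-line}, rescale the radius via the Key Lemma, then compare with a covering number in $X$), but the internal mechanics of (i) and (ii) genuinely differ. For (i) you import the time-dilation device of Proposition \ref{entropyrelations}.(ii): take a net at scale $r$ for $f^{(1+\varepsilon)T}$, pin down $\gamma$ and its approximant at the two times $0$ and $(1+\varepsilon)T$, and interpolate by convexity. The paper instead exploits that the line $\gamma'$ produced by Lemmas \ref{approximation-line-ray} and \ref{approximation-ray-line} is forward-asymptotic to $\gamma$, so that $d(\gamma(t),\gamma'(t))\leq 3S+L+76\delta$ holds \emph{for all} $t\geq 0$; this yields a $(4R+2L+C(f)+76\delta+r_0)$-dense set at the \emph{same} time scale $T$, and only the radius-rescaling of Corollary \ref{closed-entropyrescale} is needed — no $\varepsilon$, and a clean finite-$T$ asymptotic equivalence rather than just equality of the limits. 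For the inequality in (ii) you count via the evaluation map at time $T$, landing in $\overline{B}(x,T+L)\cap\textup{QC-Hull}(C')$ and invoking Proposition \ref{cov-ent-closed}; this is shorter than the paper's route through $S(x,T)\cap\textup{QC-Hull}(C')$ and Proposition \ref{closed-spherical-entropy}, and suffices for the stated inequality. What the paper's sphere-based argument buys is the two-sided estimate of Remark \ref{closed-asymp-geodesic}, which is what Theorem \ref{lipschitz-entropy} actually consumes; your version proves the proposition but not that stronger finite-$T$ statement.

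One small slip: in (i) you cannot ``orient $\gamma$ as in Lemma \ref{approximation-line-ray}.(ii)'', because $\gamma$ is a fixed \emph{parametrized} element of $\textup{Geod}_\sigma(\overline{B}(x,R),C')$ and the metric $f^T$ sees its parametrization; the favourable orientation may be the reverse one. The repair is immediate — use part (iii) of that lemma, valid for either orientation, at the cost of an extra $2S\leq 2R$ in your constant $K_0$ — and this is exactly what the paper does, so nothing downstream is affected.
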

We observe that applying the Key Lemma \ref{residualentropy} we have directly the relative version of Corollary \ref{entropyrescale}.
\begin{cor}
	\label{closed-entropyrescale}
	Let $f\in \mathcal{F}$, $x\in X$, $R\geq 0$ and $0 < r \leq r'$. Then
	\begin{equation*}
		\begin{aligned}
			&\frac{1}{T}\log \textup{Cov}_{f^T}(\textup{Geod}_\sigma(\overline{B}(x,R), C'),r') \underset{P_0,r_0,r,r',f}{\asymp}\\ &\frac{1}{T} \log \textup{Cov}_{f^T}(\textup{Geod}_\sigma(\overline{B}(x,R), C'),r).
		\end{aligned}
	\end{equation*}
	
\end{cor}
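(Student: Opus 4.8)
The plan is to prove the two inequalities encoded in the $\asymp$ relation separately, following verbatim the template of Corollary \ref{entropyrescale}; the point of the preceding remark is precisely that nothing in that argument used anything about $\textup{Geod}_\sigma(\overline{B}(x,R))$ beyond its being a subset of $\textup{Geod}_\sigma(X)$, so the same reasoning transfers to $\textup{Geod}_\sigma(\overline{B}(x,R), C')$.

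First I would dispose of the trivial inequality. Since $r \leq r'$, any $r$-dense subset of $\textup{Geod}_\sigma(\overline{B}(x,R), C')$ with respect to $f^T$ is automatically $r'$-dense, whence $\textup{Cov}_{f^T}(\textup{Geod}_\sigma(\overline{B}(x,R), C'), r') \leq \textup{Cov}_{f^T}(\textup{Geod}_\sigma(\overline{B}(x,R), C'), r)$. Dividing by $T$ after taking logarithms yields one of the two required comparisons with no threshold at all.

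For the reverse estimate I would start from an optimal $r'$-cover of $\textup{Geod}_\sigma(\overline{B}(x,R), C')$ realized by balls $\overline{B}_{f^T}(\gamma_i, r')$ with $\gamma_i \in \textup{Geod}_\sigma(\overline{B}(x,R), C') \subseteq \textup{Geod}_\sigma(X)$, and then refine each such ball into $r$-balls. This produces the submultiplicative bound $\textup{Cov}_{f^T}(\textup{Geod}_\sigma(\overline{B}(x,R), C'), r) \leq \textup{Cov}_{f^T}(\textup{Geod}_\sigma(\overline{B}(x,R), C'), r') \cdot \sup_{\gamma \in \textup{Geod}_\sigma(X)} \textup{Cov}_{f^T}(\overline{B}_{f^T}(\gamma, r'), r)$. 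Passing to $\frac{1}{T}\log$, the first factor is exactly the target function on the right, while the Key Lemma \ref{residualentropy} gives $\frac{1}{T}\log \sup_{\gamma} \textup{Cov}_{f^T}(\overline{B}_{f^T}(\gamma, r'), r) \underset{P_0,r_0,r,r',f}{\asymp} 0$; transitivity of $\asymp$ then closes the argument.

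The only place requiring a moment's care — the step I would flag as the (mild) obstacle — is the refinement bound, because the covering number is defined via $r$-dense subsets of the set itself, so the centers produced inside each $\overline{B}_{f^T}(\gamma_i, r')$ need not lie in $\textup{Geod}_\sigma(\overline{B}(x,R), C')$. This is harmless: replacing each genuinely used external center by a nearby point of the set, at the cost of doubling the scale, turns the estimate into one comparing scales $2r$ and $r'$, and since the Key Lemma provides the vanishing of $\frac{1}{T}\log \textup{Cov}_{f^T}(\overline{B}_{f^T}(\gamma, r'), \rho)$ uniformly in $\gamma$ for every $0 < \rho \leq r'$, the constant is absorbed and the asymptotic equivalence is unaffected. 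In particular the threshold function depends only on $P_0, r_0, r, r', f$, as claimed.
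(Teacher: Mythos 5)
Your argument is exactly the paper's: the paper proves this corollary by observing that the submultiplicative bound $\textup{Cov}_{f^T}(\cdot,r)\leq \textup{Cov}_{f^T}(\cdot,r')\cdot\sup_\gamma\textup{Cov}_{f^T}(\overline{B}_{f^T}(\gamma,r'),r)$ from Corollary \ref{entropyrescale} transfers verbatim to $\textup{Geod}_\sigma(\overline{B}(x,R),C')$ and then invokes the Key Lemma \ref{residualentropy}. Your additional remark about relocating the refined centers into the set (at the cost of doubling the scale) is a legitimate point the paper leaves implicit, and it is correctly absorbed by the uniformity of the Key Lemma.
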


\begin{proof}[Proof of of Proposition \ref{closed-entropyrelations}.]
	We fix $R\geq L$ and $T\geq 0$. We take a set $\gamma_1, \ldots, \gamma_N$ of $\sigma$-geodesic lines realizing
	$\text{Cov}_{f^T}(\text{Geod}_\sigma(\overline{B}(x,L),C'), r_0).$
	Our aim is to show that $\gamma_1,\ldots, \gamma_N$ is a $(4R + 2L + C(f) + 76\delta + r_0)$-dense subset of $\text{Geod}_\sigma(\overline{B}(x,R),C').$ This, together with Corollary \ref{closed-entropyrescale}, will prove (i).\linebreak
	We consider a $\sigma$-geodesic line $\gamma \in \text{Geod}_\sigma(\overline{B}(x,R),C')$, so $d(\gamma(0), x) =: S \leq R$. By Lemma \ref{approximation-line-ray} there exists a $\sigma$-geodesic ray $\xi$ starting at $x$ such that 
	$d(\xi(S + t), \gamma(t))\leq 2S + 76\delta$ 
	for all $t\geq 0$ and in particular $\xi^+$ belongs to $C$. Now we apply Lemma \ref{approximation-ray-line} to find a $\sigma$-geodesic line $\gamma' \in \text{Geod}(C')$ such that
	$d(\xi(t), \gamma'(t))\leq L$
	for all $t\geq 0$.
	Clearly we have $\gamma' \in \text{Geod}_\sigma(\overline{B}(x,L), C')$ and
	$d(\gamma'(S + t), \gamma(t)) \leq 2S + L + 76\delta$ for all $t\geq 0$. Therefore
	$d(\gamma'(t), \gamma(t)) \leq 3S + L + 76\delta$ for all $t\geq 0$.
	This implies that for all $t\in [0,T]$ we have
	$$f^t(\gamma, \gamma')
	%=\int_{-\infty}^{+\infty} d(\gamma(t + s), \gamma'(t + s))f(s)ds = $$
	%$$\int_{-\infty}^{-t} d(\gamma(t + s), \gamma'(t + s))f(s)ds + \int_{-t}^{+\infty} d(\gamma(t + s), \gamma'(t + s))f(s)ds \leq $$
	\leq\int_{-\infty}^{-t}\big( d(\gamma(0),\gamma'(0)) + 2\vert s \vert\big)f(s)ds + \int_{-t}^{+\infty}\big(3S + L + 76\delta\big)f(s)ds.$$
	Since $d(\gamma(0),\gamma'(0)) \leq L + S$ we get
	$f^t(\gamma, \gamma') \leq 4S + 2L + C(f) + 76\delta$ using the properties of $f$, and so $f^T(\gamma, \gamma')\leq 4R + 2L + C(f) + 76\delta.$
	Moreover, since $\gamma' \in \text{Geod}_\sigma(\overline{B}(x,L),C')$, there exists $\gamma_i$ such that $f^T(\gamma',\gamma_i)\leq r_0$. This implies 
	$f^T(\gamma, \gamma_i)\leq 4R + 2L + C(f) + 76\delta + r_0.$\\
	We observe that (iii) follows directly from the previous corollary. \\
	The first equality in (ii) follows by (i). 
	In order to prove the inequality we fix $y_1,\ldots,y_N$ realizing Cov$(S(x,T)\cap \text{QC-Hull}(C'), r_0)$. Up to change $y_i$ with a point at distance at most $8\delta$ from it we can suppose there are $\gamma_i \in \text{Geod}_\sigma(C')$ such that $y_i \in \gamma_i$ and $y_1,\ldots,y_N$ is a $(8\delta + r_0)$-dense subset of $S(x,T)\cap \text{QC-Hull}(C')$, as follows by Lemma \ref{bicombing-qchull}. By Lemma \ref{approximation-line-ray} there exists an orientation of $\gamma_i$ such that, called $S_i=d(x,\gamma_i(0))$ and $T_i \geq 0$ such that $\gamma_i(T_i) = y_i$, we have $T\leq S_i + T_i \leq T + 4\delta$ and the $\sigma$-geodesic ray $\xi_i = [x,\gamma_i^+]$ satisfies $d(\xi_i(S_i+t), \gamma_i(t))\leq 76\delta$ for all $t\geq 0$. By Lemma \ref{approximation-ray-line} there exists $\gamma_i'\in \text{Geod}_\sigma(\overline{B}(x,L), C')$ such that $d(\gamma_i'(t), \xi_i(t))\leq L$ for all $t\geq 0$. We claim that the set $\lbrace\gamma_i'\rbrace$ is $(6L + 176\delta + 2r_0 + 2C(f))$-dense in $\text{Geod}_\sigma(\overline{B}(x,L), C')$. By (i) and (iii) this would imply the thesis.
	We fix $\gamma \in \text{Geod}_\sigma(\overline{B}(x,L), C')$, so there exists $y \in S(x,T)$ and $T_y\in [T-L,T+L]$ such that $\gamma(T_y)=y$ and therefore $d(y,y_i)\leq 8\delta + r_0$ for some $i$.
	We observe that we have $d(\gamma_i'(S_i + T_i), y_i) \leq L + 76\delta$ and so $d(\gamma_i'(T), y_i) \leq L + 80\delta$. Moreover $d(\gamma(T),y_i)\leq L + 8 \delta + r_0$ implying $d(\gamma(T), \gamma_i'(T)) \leq 2L + 88\delta + r_0$. Furthermore by definition $d(\gamma(0),\gamma_i'(0))\leq 2L$, so by convexity of $\sigma$ we get $d(\gamma(t), \gamma_i'(t))\leq 2L + 88\delta + r_0$ for all $t\in [0,T]$. The thesis follows by the classical subdivision of the integral defining $f$ into three parts, each estimated by the constants above.
\end{proof}

\begin{proof}[Proof of Theorem \ref{lipschitz-entropy}]
	We fix a geometric metric \texthtd\, on $\text{Geod}_\sigma(C)$ and we denote by $M$ the Lipschitz constant with respect to \texthtd\, of the evaluation map $E$. By Remark \ref{C-subset-C'} the constant $L$ given by Lemma \ref{approximation-ray-line} can be chosen independently of $C' \subseteq C$, once $x$ is fixed. 	
	Clearly we have
	$$\sup_{R\geq 0, C'\subseteq C} \lim_{r\to 0} \limsup_{T\to + \infty}\frac{1}{T}\log\text{Cov}_{\text{\texthtd}^T}(\text{Geod}_\sigma(\overline{B}(x,R),C'),r) \geq$$
	$$\sup_{C'\subseteq C}\limsup_{T\to + \infty}\frac{1}{T}\log\text{Cov}_{\text{\texthtd}^T}(\text{Geod}_\sigma(\overline{B}(x,L), C'),r_0).$$
	We fix $\sigma$-geodesic lines $\gamma_1,\ldots,\gamma_N$ realizing Cov$_{\text{\texthtd}^T}(\text{Geod}_\sigma(\overline{B}(x,L), C'), r_0)$. Since $d(\gamma_i(0), x)\leq L$ for all $i=1,\ldots,N$ then there exists $t_i \in [T-L,T+L]$ such that $d(\gamma_i(t_i), x)=T$. We claim that the points $y_i =\gamma_i(t_i) \in S(x,T)\cap \text{QC-Hull}(C')$ are $(2L + 80\delta + Mr_0)$-dense. By Proposition \ref{closed-spherical-entropy} this would imply
	$$\overline{h_{\text{Lip-top}}}(\text{Geod}_\sigma(C)) \geq \sup_{C'\subseteq C} \overline{h_{\text{Lip-top}}}(\text{Geod}_\sigma(C')) \geq \sup_{C'\subseteq C} \overline{h_{\text{Cov}}}(C').$$
	We fix $y\in S(x,T)\cap \text{QC-Hull}(C')$ and we select a geodesic line $\gamma\in \text{Geod}(C')$ containing $y$. Up to change $y$ with a point at distance at most $8\delta$ we can suppose $\gamma\in \text{Geod}_\sigma(C')$, as follows by Lemma \ref{bicombing-qchull}. By Lemma \ref{approximation-line-ray}, with an appropriate choice of the orientation of $\gamma$, the $\sigma$-geodesic ray $\xi=[x,\gamma^+]$ satisfies $d(\xi(S+t), \gamma(t))\leq 76\delta$ for all $t\geq 0$, where $S=d(x,\gamma(0))$. By Lemma \ref{approximation-ray-line} there exists $\gamma'\in \text{Geod}_\sigma(\overline{B}(x,L), C')$ such that $d(\xi(t), \gamma'(t))\leq L$ for all $t\geq 0$, implying $d(\gamma'(S+t),\gamma(t))\leq L + 76\delta$ for all $t\geq 0$. Denoting by $T_y$ the real number such that $\gamma(T_y) = y$ we have by Lemma \ref{approximation-line-ray} that $T\leq S + T_y \leq T + 4\delta$. Therefore we apply the previous estimate with $t=T_y$ obtaining $d(\gamma'(T), y) \leq d(\gamma'(T),\gamma'(S + T_y)) + d(\gamma'(S+T_y), y) \leq L + 80\delta$.
	Moreover there exists $i\in \lbrace 1,\ldots,N\rbrace$ such that $\text{\texthtd}^T(\gamma',\gamma_i)\leq r_0$ and in particular $d(\gamma'(T),\gamma_i(T))\leq Mr_0$. Therefore we get $d(y_i, y)\leq d(\gamma_i(t_i), \gamma_i(T)) + d(\gamma_i(T), y) \leq 2L + 80\delta + Mr_0$. Now, up to add $8\delta$, we  obtain the inequality.
The other inequality follows by Proposition \ref{closed-entropyrelations}. Indeed we have
$$\overline{h_{\text{Lip-top}}}(\text{Geod}_\sigma(C)) \leq \sup_{C'\subseteq C} \overline{h_f}(\text{Geod}_\sigma(C')) \leq \sup_{C'\subseteq C} \overline{h_{\text{Cov}}}(C').$$
\end{proof}
\begin{obs}
	\label{closed-asymp-geodesic}
	Let $(X,\sigma)$ be as in Theorem \ref{lipschitz-entropy}, $C\subseteq \partial X$ closed and \linebreak 
	$x\in \textup{QC-Hull}(C)$. By the proof of Theorem \ref{lipschitz-entropy}, Lemma \ref{approximation-ray-line} and Remark \ref{C-subset-C'} we obtain
	$$\frac{1}{T}\log \textup{Cov}(S(x,T)\cap \textup{QC-Hull}(C), r_0) \underset{P_0, r_0, \delta, f}{\asymp}\frac{1}{T}\log \textup{Cov}_{f^T}(\textup{Geod}_\sigma(\overline{B}(x,L), C), r_0)$$
	for all $f\in \mathcal{F}$, where $L=14\delta$.
\end{obs}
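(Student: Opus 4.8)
The statement asserts that two $T$-indexed functions have the same asymptotic behaviour, so the plan is to establish two finite-$T$ covering comparisons, one in each direction, and then absorb the resulting changes of scale into the scale-change equivalences already proved. Two preliminary observations make the constants come out right. First, because $x\in\textup{QC-Hull}(C)$ one may take the auxiliary point $x'$ in the proof of Lemma \ref{approximation-ray-line} to be $x$ itself, so that the $8\delta$ correction needed to pass from $\xi_{x'z}$ to $\xi_{xz}$ is unnecessary and the conclusion of that lemma holds with $L=14\delta$. Second, for every $f\in\mathcal{F}$ the evaluation map is $1$-Lipschitz by Lemma \ref{PropGeod}.(iii), so the Lipschitz constant $M$ appearing in the proof of Theorem \ref{lipschitz-entropy} equals $1$ when the geometric metric is $f$.

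First I would extract, from the inequality part of the proof of Proposition \ref{closed-entropyrelations}.(ii) applied with $C'=C$, the non-asymptotic bound
$$\textup{Cov}_{f^T}\big(\textup{Geod}_\sigma(\overline{B}(x,L),C),\rho_1\big)\leq \textup{Cov}\big(S(x,T)\cap\textup{QC-Hull}(C),r_0\big),$$
where $\rho_1=6L+176\delta+2r_0+2C(f)$ is precisely the density constant produced there: the $N$ lines $\gamma_i'$ built from a minimal $r_0$-net of $S(x,T)\cap\textup{QC-Hull}(C)$ form a $\rho_1$-dense subset of $\textup{Geod}_\sigma(\overline{B}(x,L),C)$ for $f^T$. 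Dividing by $T$ and replacing the scale $\rho_1$ by $r_0$ on the left via Corollary \ref{closed-entropyrescale} shows that $\frac{1}{T}\log\textup{Cov}_{f^T}(\textup{Geod}_\sigma(\overline{B}(x,L),C),r_0)$ is, up to an asymptotically negligible error, at most $\frac{1}{T}\log\textup{Cov}(S(x,T)\cap\textup{QC-Hull}(C),r_0)$.

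For the reverse comparison I would specialize the lower-bound part of the proof of Theorem \ref{lipschitz-entropy} to the fixed closed set $C$ and to the metric $f$ (so that $M=1$). That argument turns a minimal $r_0$-net $\gamma_1,\dots,\gamma_N$ of $\textup{Geod}_\sigma(\overline{B}(x,L),C)$ for $f^T$ into points $y_i=\gamma_i(t_i)\in S(x,T)\cap\textup{QC-Hull}(C)$ that are $\rho_2$-dense, with $\rho_2=2L+88\delta+r_0$, the extra $8\delta$ coming from Lemma \ref{bicombing-qchull}. Hence
$$\textup{Cov}\big(S(x,T)\cap\textup{QC-Hull}(C),\rho_2\big)\leq \textup{Cov}_{f^T}\big(\textup{Geod}_\sigma(\overline{B}(x,L),C),r_0\big).$$
The scale $\rho_2$ on the sphere can be brought back to $r_0$ asymptotically by passing from the sphere to the ball through Proposition \ref{closed-spherical-entropy}, changing the scale on the ball through Proposition \ref{cov-ent-closed}, and returning to the sphere. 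Chaining the two directions through these scale-change equivalences squeezes the two functions in the statement between asymptotically equal quantities, which yields the claimed $\asymp$ with threshold depending only on $P_0,r_0,\delta$ and $f$.

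Most of the content is already in the cited proofs, so the real task is bookkeeping: checking that each density constant $\rho_1,\rho_2$ depends only on $P_0,r_0,\delta$ and $C(f)$, and that every change of scale is controlled by Proposition \ref{packingsmallscales}. The step that deserves genuine care is the first observation, namely justifying $L=14\delta$: one must verify that $x\in\textup{QC-Hull}(C)$ truly licenses the choice $x'=x$ in Lemma \ref{approximation-ray-line}, and, together with Remark \ref{C-subset-C'}, that this value of $L$ may be used uniformly for all the geodesics appearing in both constructions.
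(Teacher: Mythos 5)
Your reconstruction is correct and follows exactly the route the paper intends: specializing the two density claims from the proofs of Proposition \ref{closed-entropyrelations}.(ii) and Theorem \ref{lipschitz-entropy} to the fixed closed set $C$ and the geometric metric $f$ (with $M=1$ by Lemma \ref{PropGeod}.(iii)), using $x\in\textup{QC-Hull}(C)$ to take $x'=x$ in Lemma \ref{approximation-ray-line} so that $L=14\delta$, and absorbing the explicit density constants via Corollary \ref{closed-entropyrescale}, Proposition \ref{closed-spherical-entropy} and Proposition \ref{cov-ent-closed}. The bookkeeping of the constants $\rho_1$, $\rho_2$ and of the parameters on which the thresholds depend matches the paper.
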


\subsection{Minkowski dimension}
\label{subsec-closed-minkowski}
The relative version of Proposition \ref{ent+mink} is:
\begin{prop}
	\label{closed-ent-mink}
	Let $(X,\sigma)$ be a $\delta$-hyperbolic \textup{GCB}-space that is $P_0$-packed at scale $r_0$, let $C$ be a subset of $\partial X$, $x\in X$ and $L$ be the constant given by Lemma \ref{approximation-ray-line}. Then 
	$$\frac{1}{T}\log \textup{Cov}(C,e^{-T}) \underset{P_0,r_0,\delta,L}{\asymp} \frac{1}{T}\log \textup{Cov}(S(x, T)\cap \textup{QC-Hull}(C),r_0).$$
	In particular the upper (resp. lower) Minkowski dimension of $C$ equals the upper (resp. lower) covering entropy of $C$.
\end{prop}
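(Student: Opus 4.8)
The plan is to mimic the proof of Proposition~\ref{ent+mink}, replacing the clean correspondence $z\mapsto\xi_{xz}(T)$ between $\partial X$ and $S(x,T)$ by one that keeps all the relevant points inside $\textup{QC-Hull}(C)$; the price of this confinement is exactly the constant $L$ of Lemma~\ref{approximation-ray-line}, which is why it is allowed to appear in the threshold. As there, the statement is proved by two opposite inequalities, each obtained by transporting an optimal cover of one side to an almost-cover of the other.

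For the first inequality I would start from a cover of $C$ by generalised visual balls of radius $e^{-T}$ of minimal cardinality $\textup{Cov}(C,e^{-T})$. One may assume the centres $z_1,\dots,z_N$ lie in $C$ at the cost of replacing $e^{-T}$ by $e^{-(T-\delta)}$, since by \eqref{hyperbolicity-boundary} a ball $B(z,\rho)$ meeting $C$ is contained in $B(z',\rho e^{\delta})$ for any $z'\in B(z,\rho)\cap C$, and this radius change is asymptotically irrelevant. For each $z_i$, Lemma~\ref{approximation-ray-line} gives a geodesic line $\gamma_i$ with endpoints in $C$ and $d(\xi_{xz_i}(t),\gamma_i(t))\leq L$, and I set $w_i=\gamma_i(T)\in\textup{QC-Hull}(C)$. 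Given $y\in S(x,T)\cap\textup{QC-Hull}(C)$, it lies on a geodesic line with endpoints in $C$; orienting that line by Lemma~\ref{approximation-line-ray} so that $y$ sits on the positive ray and letting $z\in C$ be the corresponding endpoint, the estimate $d(\xi_{xz}(S+t),\gamma(t))\leq 76\delta$ with $S+t_y\in[T,T+4\delta]$ (so that the dependence on $S=d(x,\gamma)$ cancels) yields $d(\xi_{xz}(T),y)\leq c_1\delta$. Choosing $i$ with $(z,z_i)_x>T$ and applying Lemma~\ref{product-rays}(i) gives $d(\xi_{xz}(T),\xi_{xz_i}(T))\leq 6\delta$, hence $d(y,w_i)\leq c_2\delta+L$ once we use $d(\xi_{xz_i}(T),\gamma_i(T))\leq L$. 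Thus $\{w_i\}$ is a $(c_2\delta+L)$-net of $S(x,T)\cap\textup{QC-Hull}(C)$, and Proposition~\ref{cov-ent-closed} together with Proposition~\ref{closed-spherical-entropy} lets me rescale to $r_0$, bounding $\tfrac1T\log\textup{Cov}(S(x,T)\cap\textup{QC-Hull}(C),r_0)$ above by $\tfrac1T\log\textup{Cov}(C,e^{-T})$ asymptotically.

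For the reverse inequality I would fix a constant $c=c(\delta,L,r_0)$ and take $y_1,\dots,y_N$ realising $\textup{Cov}(S(x,T+c)\cap\textup{QC-Hull}(C),r_0)$, replacing each $y_i$ by an $8\delta$-close point on a $\sigma$-geodesic line with endpoints in $C$ (Lemma~\ref{bicombing-qchull}) and calling $z_i\in C$ one such endpoint. For $z\in C$, Lemma~\ref{approximation-ray-line} again furnishes a geodesic line $\gamma$ with endpoints in $C$ and $d(\xi_{xz}(t),\gamma(t))\leq L$, so $\gamma(T+c)\in\textup{QC-Hull}(C)$ lies within $L$ of $\xi_{xz}(T+c)$ and within $r_0$ of some $y_i$; tracing $y_i$ back to $\xi_{xz_i}$ through $\gamma_i$ shows $d(\xi_{xz}(T+c),\xi_{xz_i}(T+c))<2b$ for a constant $b<c$, whence Lemma~\ref{product-rays}(ii) gives $(z,z_i)_x>T+c-b>T$, that is $z\in B(z_i,e^{-T})$. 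Hence $\textup{Cov}(C,e^{-T})\leq\textup{Cov}(S(x,T+c)\cap\textup{QC-Hull}(C),r_0)$, and since $(T+c)/T\to1$ the two functions are asymptotic.

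Combining the two inequalities with Proposition~\ref{cov-ent-closed} and Proposition~\ref{closed-spherical-entropy} absorbs every auxiliary net-constant, radius change and shift $c$ into the relation $\asymp$, with threshold depending only on $P_0,r_0,\delta,L$; the equality of the upper (resp.\ lower) Minkowski dimension of $C$ with the upper (resp.\ lower) covering entropy of $C$ then follows by passing to $\limsup$ (resp.\ $\liminf$). I expect the genuine difficulty to be purely organisational: selecting, for each point $y$ on the sphere and each boundary point $z$, the correct orientation of the approximating geodesic line so that its positive endpoint really controls $y$, and dovetailing Lemma~\ref{approximation-line-ray} with Lemma~\ref{approximation-ray-line} so that every intermediate point stays inside $\textup{QC-Hull}(C)$ — this is precisely where $L$, and hence $d(x,\textup{QC-Hull}(C))$, necessarily enters the estimates.
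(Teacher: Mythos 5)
Your proposal is correct and follows essentially the same route as the paper: both directions transport nets between $C$ and $S(x,T)\cap\textup{QC-Hull}(C)$ by combining Lemma \ref{approximation-ray-line} (rays to lines with endpoints in $C$), Lemma \ref{approximation-line-ray} (lines back to rays from $x$) and Lemma \ref{product-rays}, then absorb the constant radius changes and the shift $T\mapsto T+c$ via Proposition \ref{cov-ent-closed}. The only detail to tidy up is that points such as $w_i=\gamma_i(T)$ and $\gamma(T+c)$ need not lie exactly on the relevant sphere; as in the paper one slides along the geodesic line to the point at the exact distance, at the cost of an extra $2L$ in the net radius.
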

\begin{proof}
	We can suppose $T\geq L$. Let $z_1,\ldots,z_N$ be points realizing $\textup{Cov}(C,e^{-T})$. For every $i$ we take a geodesic ray $[x,z_i]$. By Lemma \ref{approximation-ray-line} there exists a geodesic line $\gamma_i$ with both endpoints in $C$ such that $d(\gamma_i(t),\xi(t)) \leq L$ for every $t\geq 0$. We take a point $y_i = \gamma_i(t_i)$ with $t_i\geq 0$ such that $d(y_i,x)=T$. We know $\vert t_i - T \vert \leq 2L$. We claim the set $\lbrace y_i \rbrace$ cover $S(x,T)\cap \text{QC-Hull}(C)$ at scale $82\delta + 3L$.
	Indeed let $y\in S(x,T)\cap \text{QC-Hull}(C)$, i.e. it exists a geodesic line $\gamma$ with both endpoints in $C$ such that $y\in \gamma$. We parametrize $\gamma$ so that $\gamma(0)$ is a projection of $x$ on $\gamma$ and $y\in \gamma\vert_{[0,+\infty)}$. We consider a geodesic ray $\xi = [x,\gamma^+]$. By Lemma \ref{approximation-line-ray} we know that $d(\xi(T),y)\leq 76\delta + 3L$. Moreover $\gamma^+\in C$, so there is $i$ such that $(z_i,\gamma^+)> T$. By Lemma \ref{product-rays} we get $d(y_i, \xi(T))\leq 6\delta$, so $d(y,y_i)\leq 82\delta + 3L$. This shows
	$$\text{Cov}(S(x,T)\cap \text{QC-Hull}(C), 82\delta + 3L) \leq \text{Cov}(C,e^{-T}).$$
	Let $\lbrace y_1,\ldots, y_N\rbrace$ be points realizing $\text{Cov}(S(x,T+2L + 38\delta)\cap \text{QC-Hull}(C), \frac{L}{2})$. Therefore for every $i$ it exists a geodesic line $\gamma_i$ with both endpoints in $C$ containing $y_i$. We parametrize each $\gamma_i$ so that $\gamma_i(0)$ is a projection of $x$ on $\gamma_i$ and $y_i\in \gamma_i\vert_{[0,+\infty)}$. We claim that the set $\lbrace\gamma_i^+\rbrace$ cover $C$ at scale $e^{-T}$. Indeed for every $z\in C$ we take a geodesic ray $\xi = [x,z]$ and we set $y = \xi(T + 2L + 38\delta)$. By Lemma \ref{approximation-ray-line} we know it exists a geodesic line $\gamma$ with both endpoints in $C$ such that $d(y,\gamma(T + 2L + 38\delta))\leq L$. Moreover there is a point $y'$ along $\gamma$ which is at distance exactly $T + 2L + 38\delta$ from $x$ and that satisfies $d(y,y')\leq 2L$. Now we know it exists $i$ such that $d(y', y_i)\leq \frac{L}{2}$, moreover for a fixed geodesic ray $\xi_i = [x,z_i]$ it holds $d(\xi_i(T + 2L + 38\delta), y_i) \leq L + 76\delta$ by Lemma \ref{approximation-line-ray}. So in conclusion we get 
	$$d(\xi(T + 2L + 38\delta), \xi_i(T + 2L + 38\delta)) < 4L + 76\delta.$$ 
	By Lemma \ref{product-rays} we conclude that $(z,z_i)_x > T$, i.e. 
	$$\text{Cov}(C, e^{-T}) \leq \text{Cov}\left(S(x,T+2L + 38\delta)\cap \text{QC-Hull}(C), \frac{L}{2}\right).$$
	Now the conclusion follows by Proposition \ref{cov-ent-closed}.
\end{proof}

The proof of Theorems \ref{E} and \ref{G} follow by Proposition \ref{cov-ent-closed}, Proposition \ref{closed-spherical-entropy}, Proposition \ref{closed-volume-entropy}, Theorem \ref{lipschitz-entropy}, Proposition \ref{closed-entropyrelations}, Remark \ref{closed-asymp-geodesic} and Proposition \ref{closed-ent-mink}.

		\clearpage
		\bibliographystyle{alpha}
		\bibliography{Entropies_of_non_positively_curved_metric_spaces}
		
	\end{document}